\newtheorem{theo}{Theorem}[subsection]
\newtheorem{theoa}{Theorem}
\newtheorem{prop}[theo]{Proposition}
\newtheorem{cor}{Corollary}[theo]
\newtheorem{defi}[theo]{Definition}
\newtheorem{remark}[theo]{Remark}
\newcommand{\Flo}{\mathit{J}}
\newcommand{\Op}{\mathrm{Op}}
\newcommand{\Vois}{\mathrm{Neigh}}
\newcommand{\dist}{\mathrm{dist}}
\newcommand{\tub}{\mathrm{TubNeigh}}
\begin{document}

\title[Bohr-Sommerfeld conditions]{Bohr-Sommerfeld quantization conditions for non-selfadjoint perturbations of selfadjoint operators in dimension one.}

\author{Ophélie Rouby}
\address{IRMAR (UMR 6625 du CNRS), Universit\'e de Rennes 1, campus de Beaulieu. B\^at. 22-23. 35042 Rennes Cedex}
\curraddr{}
\email{ophelie.rouby@univ-rennes1.fr}
\thanks{}

\subjclass[1991]{}

\date{}

\dedicatory{}

\begin{abstract}
In this paper, we give a description of the spectrum of a class of non-selfadjoint perturbations of selfadjoint $\hbar$-pseudo-differential operators in dimension one and we show that it is given by Bohr-Sommerfeld quantization conditions. To achieve this, we make use of previous work by Michael Hitrik, Anders Melin and Johannes Sjöstrand. We also give an application of our result in the case of $ \mathcal{P} \mathcal{T}$-symmetric pseudo-differential operators.
\end{abstract}

\maketitle
\setcounter{tocdepth}{1}

\section*{Introduction}
The object of this paper is to describe the spectrum of a class of pseudo-differential operators in the semi-classical limit. Semi-classical analysis is a rigorous mathematical framework that allows to relate classical mechanics and quantum mechanics in the regime where the Planck constant $ \hbar$ goes to zero, using the microlocal analysis of pseudo-differential operators as its basic tools. In quantum mechanics to each observable we associate an operator and the possible values of this observable correspond to the spectrum of the operator. Most of the papers on quantum mechanics are devoted to the study of selfadjoint operators which corresponds to real observables. However, the spectral analysis of non-selfadjoint operators, which is
the subject of this paper, is very useful in the modelling of damping or for the study of resonances \cite{MR1912874}. Moreover, PT-symmetric operators, which are not necessarily selfadjoint but may have, under some conditions, a real spectrum, have been recently considered as a natural generalization of quantum observables, see \cite{Bender}. \\

Bohr-Sommerfeld quantization conditions were introduced in the study of electronic levels of atoms to determine which classical trajectories were relevant. More precisely, Niels Bohr proposed that the electrons in atoms could only exist in certain well-defined stable orbits satisfying the following condition:
$$ \dfrac{1}{2 \pi} \oint p.dq = n \hbar , \quad \text{for some $n \in \mathbb{N}$},$$
where the pair $(q,p)$ are the position and momentum coordinates of an electron and where the integral is computed over some closed orbit in the phase space. Further experiments showed that Bohr's model of the atom seemed too simple to describe some heavier elements, so Arnold Sommerfeld expanded the original model to explain these phenomenons by
suggesting that electrons travel in elliptical orbits around a nucleus instead of circular orbits. \\

Mathematically, Bohr-Sommerfeld quantization conditions give a description of the spectrum of some class of selfadjoint operators. These conditions are established in the one-dimensional case and in that of completely integrable systems. More precisely, we say that a one-dimensional selfadjoint pseudo-differential operator $P_{ \hbar}$ satisfies the Bohr-Sommerfeld conditions if its eigenvalues are the real numbers $E$ such that:
$$ \int_{ \gamma_E} \alpha_0 + \hbar \int_{ \gamma_E} \kappa + \hbar \dfrac{\pi}{2} \mu( \gamma_E) + \mathcal{O}( \hbar^2) \in 2 \pi \hbar \mathbb{Z},$$
where:
\begin{itemize}
\item $ \gamma_E$ is a specific loop in the level set $ \Lambda_E = \lbrace p^{-1}(E) \rbrace$ where $p$ is the principal symbol of the operator $P_{ \hbar}$;
\item $ \mu( \gamma_E)$ is the Maslov index of the curve $ \gamma_E$;
\item $ \alpha_0$ is the Liouville $1$-form;
\item $ \kappa$ is the subprincipal $1$-form.
\end{itemize}
The case of regular energy curves in dimension one has been investigated by Bernard Helffer and Didier Robert in \cite{MR776281} and that of completely integrable systems by Anne-Marie Charbonnel in \cite{MR962310} and by San V\~u Ng\d{o}c in \cite{MR1721373} (where the subprincipal $1$-form was defined). In the case of non-selfadjoint operators, these conditions are not satisfied; however if we consider a non-selfadjoint pseudo-differential operator close to a selfadjoint one, several results have been recently obtained. More precisely, these conditions have been extended in the case of non-selfadjoint perturbations of selfadjoint operators in dimension two by Anders Melin and Johannes Sjöstrand in \cite{MR1957486,MR2003421} and then by Michael Hitrik and Johannes Sjöstrand in \cite{MR2036816}. The case of non-selfadjoint perturbations of selfadjoint operators in dimension one has not yet been treated, so that is the case that we investigate here. \\
More precisely, we give a description of the spectrum of a family of $\hbar$-pseudo-differential operators of the form $F^{ \epsilon}_{\hbar}(x, \hbar D_x) + i \epsilon Q^{ \epsilon}_{\hbar}(x, \hbar D_x)$ where $F_{ \hbar}^{ \epsilon}$ and $Q_{ \hbar}^{ \epsilon}$ are selfadjoint operators depending smoothly on a parameter $ \epsilon$. The result states that any eigenvalue of such object can be written as a function of $\hbar$ times an integer. This function is analytic, depending on the small parameter $ \epsilon$ and admits an asymptotic expansion in powers of $\hbar$. Moreover, the first term in the asymptotic expansion of this function is the inverse of a complex action integral. Then, we give an application of our result for $ \mathcal{P} \mathcal{T}$-symmetric pseudo-differential operators in dimension one. \\
\\
\textit{Structure of the paper:}
\begin{itemize}
\item in Section 1, we state our result;
\item in Section 2, we prove the result in two main steps. The first one consists in establishing the result in the case of an operator acting on $L^2( \mathbb{S}^1)$ and to prove it by using techniques developed by Michael Hitrik, Anders Melin, Johannes Sjöstrand and San V\~u Ng\d{o}c in the following papers \cite{MR1957486,MR2003421,MR2036816,MR2288739,SJ}. More precisely, we use complex microlocal analysis and Grushin problems. Afterwards, in the second step, we generalize the result obtained in the first step to obtain our result;
\item in Section 3, we give an application of our result for $ \mathcal{P} \mathcal{T}$-symmetric operators;
\item in Section 4, we give some numerical illustrations of our result.
\end{itemize}

\noindent \textbf{Acknowledgements.} The author would like to thank San V\~u Ng\d{o}c for his advice and supervision, as well as Johannes Sjöstrand for kindly pointing out the application to $ \mathcal{P} \mathcal{T}$-symmetric operators. Funding was provided by the Universit\'e de Rennes 1 and the Centre Henri Lebesgue.

\section{Result}
\subsection{Assumptions}
Let $0 < \hbar \leq 1$ be the semi-classical parameter. Recall the definition of the Weyl quantization.

\begin{defi}
Let $p_{ \hbar}(x, \xi) \in \mathcal{S}( \mathbb{R}^2)$ be a function in the Schwartz space defined on the cotangent space $T^* \mathbb{R} \simeq \mathbb{R}^2$ and admitting an asymptotic expansion in powers of $ \hbar$. We define the Weyl quantization of the symbol $p_{ \hbar}$, denoted by $P_{ \hbar}(x, \hbar D_x)$ (where $D_x = -i \partial_x$), by the following formula, for $u \in L^2( \mathbb{R})$:
$$ P_{ \hbar}(x, \hbar D_x) u(x) = \dfrac{1}{2 \pi \hbar} \int_{ \mathbb{R}} \int_{ \mathbb{R}} e^{(i/ \hbar)(x-y) \xi} p_{ \hbar} \left( \dfrac{x+y}{2}, \xi \right) u(y) dy d \xi .$$
$P_{ \hbar}(x, \hbar D_x)$ is a pseudo-differential operator acting on $L^2( \mathbb{R})$ and we called the function $p_{ \hbar}(x, \xi)$ the symbol of the operator $P_{ \hbar}(x, \hbar D_x)$.
\end{defi}

\noindent Let $ \epsilon$ be a positive real number. Let $P^{ \epsilon}_{\hbar} = P^{ \epsilon}_{\hbar}(x, \hbar D_x) $ be the Weyl quantization on $ \mathbb{R}^2$ of some symbol $p^{ \epsilon}_{ \hbar}:= f^{ \epsilon}_{ \hbar} + i \epsilon q^{ \epsilon}_{\hbar}$ depending smoothly on $ \epsilon$ and satisfying:
\begin{enumerate}
\item[(A)] $p^{ \epsilon}_{ \hbar}$ is a holomorphic function on a tubular neighbourhood of $ \mathbb{R} \times \mathbb{R}$ and on this tubular neighbourhood we have:
\begin{equation} \label{eq_borne_fonction_ordre}
\exists C >0, \quad | p^{ \epsilon}_{ \hbar}(x, \xi) | \leq C m( \Re(x, \xi)),
\end{equation}
where $m$ is an order function on $ \mathbb{R}^2$, \textit{i.e.}
\begin{enumerate}
\item[1.] $m \geq 1$;
\item[2.] there exists some constants $C_0 \geq 0$ and $N_0 \geq 0$ such that, for all $ X, \hat{X} \in \mathbb{R}^2$:
$$ m(X) \leq C_0 \langle X- \hat{X} \rangle^{N_0} m( \hat{X}),$$
where $ \langle X \rangle = (1+|X|^2)^{1/2}$.
\end{enumerate}
\item[(B)] $p^{ \epsilon}_{ \hbar}$ admits an asymptotic expansion in powers of $\hbar$ in the space of holomorphic functions satisfying the bound \eqref{eq_borne_fonction_ordre} of the form:
$$ p^{ \epsilon}_{ \hbar}( x, \xi) \sim \sum_{j=0}^{ \infty} p^{ \epsilon}_j( x, \xi) \hbar^j ;$$
\item[(C)] the principal symbol, denoted by $p^{ \epsilon}$:
$$ p^{ \epsilon}( x, \xi) := p^{ \epsilon}_0(x, \xi) = f^{ \epsilon}(x, \xi) + i \epsilon q^{ \epsilon}(x, \xi),$$
with $ (x, \xi) \in \mathbb{R}^2$, is elliptic at infinity, \textit{i.e.} for $(x, \xi)$ in a tubular neighbourhood of $ \mathbb{R}^2$, there exists $C>0$ such that:
$$ \left| p^{ \epsilon}(x, \xi) \right| \geq \dfrac{1}{C} m( \Re(x, \xi)), \quad \text{for $|(x, \xi)| \geq C$} ;$$
\item[(D)] the symbols $f^{ \epsilon}_{ \hbar}$ and $q^{ \epsilon}_{ \hbar}$ are $ \mathbb{R}$-valued analytic functions on $ \mathbb{R}^2$ depending smoothly on $ \epsilon$.
\end{enumerate}

\noindent Therefore, we consider a pseudo-differential operator $P^{ \epsilon}_{ \hbar}$ acting on $L^2( \mathbb{R})$ satisfying the previous hypotheses, so we have:
$$ P^{ \epsilon}_{ \hbar}(x, \hbar D_x) = F^{ \epsilon}_{ \hbar}(x, \hbar D_x) + i \epsilon Q^{ \epsilon}_{ \hbar}(x, \hbar D_x) ,$$
where $F^{ \epsilon}_{ \hbar}$ and $Q^{ \epsilon}_{ \hbar}$ are selfadjoint pseudo-differential operators depending smoothly on $ \epsilon$. \\
In order to use the action-angle coordinates theorem, we consider, for $E_0 \in \mathbb{R}$ a fixed real number, the level set:
$$ \Lambda_{E_0} = \lbrace (x, \xi) \in \mathbb{R}^2; \left. p^{ \epsilon}(x, \xi) \right|_{ \epsilon = 0}=E_0 \rbrace.$$
We assume that:
\begin{enumerate}
\item[(E)] $ \Lambda_{E_0}$ is compact, connected and regular, \textit{i.e.} $\left. d (p^{ \epsilon} \right|_{ \epsilon = 0}) = \left. d (f^{ \epsilon} \right|_{ \epsilon = 0}) \neq 0$ on $ \Lambda_{E_0}$.
\end{enumerate}

\begin{remark}
Because of the ellipticity assumption, we already know that the level set $ \Lambda_{E_0}$ is compact for small $E_0$.
\end{remark}

\noindent \texttt{Notation:} $\tub(\mathbb{R}^2)$ denotes a tubular neighbourhood of $\mathbb{R}^2$ in $ \mathbb{C}^2$. \\

\noindent Assume, for $C>0$ a constant and for $ \epsilon_0>0$ a sufficiently small fixed real number, that:
$$ E \in \left\lbrace z \in \mathbb{C}; | \Re(z) -E_0 | < \dfrac{1}{C}, | \Im(z)| < \dfrac{\epsilon_0}{C} \right\rbrace.$$
We consider the following complex neighbourhood of the level set $ \Lambda_{E_0}$:
$$ \Lambda_E^{ \epsilon} = \lbrace ( x, \xi) \in \tub(\mathbb{R}^2); p^{ \epsilon}( x, \xi) = E \rbrace .$$ 
This level set is connected and $df^{ \epsilon} \neq 0$ on $ \Lambda_E^{ \epsilon}$ for $ \epsilon$ small enough (according to Assumption (E)). In what follows, we define an action integral $I(E)$ of the form:
$$ I(E) = \dfrac{1}{2\pi} \int_{ \gamma_E} \xi dx, $$
where $ \gamma_E$ is a specific loop in the level set $\Lambda_E^{ \epsilon}$ (see Section 2.3). We will show that under our assumptions the map $E \longmapsto I(E)$ is invertible. \\

\noindent Under the assumptions (A) to (D), the spectrum of the operator $P^{ \epsilon}_{ \hbar}$ is discrete in some fixed neighbourhood of the real number $E_0$.

\subsection{Main result}

\begin{theoa} \label{theo_result}
Let $P^{ \epsilon}_{ \hbar}$ be a pseudo-differential operator depending smoothly on a small parameter $ \epsilon$ and acting on $L^2( \mathbb{R})$. Let $E_0 \in \mathbb{R}$ such that the assumptions (A) to (E) are satisfied, in particular the operator $P_{ \hbar}^{ \epsilon}$ is of the form:
$$ P^{ \epsilon}_{ \hbar}(x, \hbar D_x) = F^{ \epsilon}_{ \hbar}(x, \hbar D_x) + i \epsilon Q^{ \epsilon}_{ \hbar}(x, \hbar D_x) ,$$
where $F_{ \hbar}^{ \epsilon}$ and $Q_{ \hbar}^{\epsilon}$ are selfadjoint pseudo-differential operators.
Let, for $ \epsilon_0>0$ a sufficiently small fixed real number:
$$ R_{C, \epsilon_0} = \left\lbrace z \in \mathbb{C}; | \Re (z) -E_0| <  \dfrac{1}{C}, | \Im (z) | < \dfrac{\epsilon_0}{C} \right\rbrace \quad \text{where $C > 0$ is a constant} .$$
Then the spectrum of the operator $P^{ \epsilon}_{ \hbar}$ in the rectangle $R_{C, \epsilon_0}$ is given by, for $ 0 \leq \epsilon < \epsilon_0$:
$$ \sigma(P^{ \epsilon}_{ \hbar}) \cap R_{C, \epsilon_0} = \lbrace g^{ \epsilon}_{ \hbar}(\hbar k) , k \in \mathbb{Z} \rbrace \cap R_{C, \epsilon_0} + \mathcal{O}(\hbar^{ \infty}) ,$$
where $g^{ \epsilon}_{ \hbar}$ is an analytic function admitting an asymptotic expansion in powers of $\hbar$ and depending smoothly on $ \epsilon$. Moreover, the first term in the asymptotic expansion of $g^{ \epsilon}_{ \hbar}$, denoted by $g^{ \epsilon}_0$, is the inverse of the action coordinate $ E \longmapsto \frac{1}{2 \pi} \int_{ \gamma_E} \xi dx$.
\end{theoa}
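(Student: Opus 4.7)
The plan is to follow the two-step programme announced in the introduction, combining the classical action-angle theorem with the complex Birkhoff normal form technology developed by Melin--Sjöstrand and Hitrik--Sjöstrand, in order to reduce $P_{\hbar}^{\epsilon}$, microlocally near $\Lambda_{E_{0}}$, to a pseudo-differential operator on $\mathbb{S}^{1}$ whose full Weyl symbol depends only on the dual variable.

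\textbf{Reduction to a normal form on $L^{2}(\mathbb{S}^{1})$.} Set $I(E) = (2\pi)^{-1}\int_{\gamma_{E}}\xi\,dx$. Since $\Lambda_{E_{0}}$ is a compact, connected, regular level curve of $f^{0} := f^{\epsilon}|_{\epsilon=0}$, the Arnold--Liouville theorem provides a real symplectomorphism $\kappa_{0}$ from a neighbourhood of the zero section of $T^{*}\mathbb{S}^{1}$ onto a neighbourhood of $\Lambda_{E_{0}}$, sending $\{\xi=I(E)\}$ to $\Lambda_{E}^{0}$ and conjugating $f^{0}$ to a function of $\xi$ alone, whose inverse is $I$. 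Because $p^{\epsilon}$ is holomorphic on $\tub(\mathbb{R}^{2})$ and depends smoothly on $\epsilon$, I would extend $\kappa_{0}$ holomorphically and deform it in $\epsilon$, producing complex canonical transformations $\kappa_{\epsilon}$ such that $p^{\epsilon}\circ\kappa_{\epsilon}$ depends only on $\xi$ at the level of principal symbols. Since the Hamiltonian flow of $f^{0}$ on $\Lambda_{E_{0}}$ is periodic, averaging $q^{\epsilon}$ along this flow eliminates the angular dependence (the cohomological obstruction being trivial on the circle); iterating this Birkhoff procedure order by order in $\epsilon$ and applying the analogous cohomological argument at each order of $\hbar$ to the subprincipal and lower symbols produces a formal symbol $\tilde{p}_{\hbar}^{\epsilon}(\xi)$ with the expected asymptotic expansion and analytic dependence on $\epsilon$.

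\textbf{Quantization and Grushin problem.} I would then quantize $\kappa_{\epsilon}$ as a microlocally invertible Fourier integral operator $U_{\hbar}$ with complex phase, in the framework of Sjöstrand's $I$-Lagrangian spaces, so that $U_{\hbar}^{-1}P_{\hbar}^{\epsilon}U_{\hbar}$ acts on $L^{2}(\mathbb{S}^{1})$ with Weyl symbol $\tilde{p}_{\hbar}^{\epsilon}(\xi)$ modulo $\mathcal{O}(\hbar^{\infty})$; diagonalisation in the Fourier basis $\{e^{ikx}/\sqrt{2\pi}\}_{k\in\mathbb{Z}}$ identifies the spectrum of the model near $E_{0}$ with $\{\tilde{p}_{\hbar}^{\epsilon}(\hbar k) : k \in \mathbb{Z}\}$, and the construction of $\kappa_{\epsilon}$ gives $\tilde{p}_{0}^{\epsilon}|_{\epsilon=0}=I^{-1}$, yielding the announced first term of $g_{\hbar}^{\epsilon}$. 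To transfer the information back to $P_{\hbar}^{\epsilon}$ on $L^{2}(\mathbb{R})$, I would set up a Grushin problem for $P_{\hbar}^{\epsilon}-E$ in which the finite-rank auxiliary operators are built from the pushforward under $U_{\hbar}$ of the circle Fourier modes localised near $\Lambda_{E_{0}}$, glued via a cut-off to a parametrix on the complement. By Assumption (C) the exterior parametrix is invertible with $\mathcal{O}(\hbar^{\infty})$ remainder, so the full Grushin problem is well-posed, and its effective Hamiltonian is, up to $\mathcal{O}(\hbar^{\infty})$, the scalar $\tilde{p}_{\hbar}^{\epsilon}(\hbar k)-E$, whose zeros are exactly the eigenvalues listed in the theorem.

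\textbf{Expected main obstacle.} The delicate point is not the formal construction but its uniform analytic control as $(\hbar,\epsilon,E)$ vary jointly: because the imaginary part of $p^{\epsilon}$ prevents $\kappa_{\epsilon}$ from preserving the real cotangent bundle, the natural functional setting is a family of exponentially weighted spaces of holomorphic functions attached to an $I$-Lagrangian deformation of $\mathbb{R}^{2}$, and one must verify that $U_{\hbar}$ is bounded uniformly in these spaces and that the resulting Grushin problem remains well-posed on the whole rectangle $R_{C,\epsilon_{0}}$. This is where the arguments from \cite{MR1957486,MR2003421,MR2036816,SJ} have to be adapted to the one-dimensional setting, and it constitutes the technical core of the work.
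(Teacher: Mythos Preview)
Your outline is correct and tracks the paper's two-step scheme (reduce to $L^{2}(\mathbb{S}^{1})$ via action--angle coordinates, then build a complex canonical transformation, quantize it through FBI-type operators on weighted holomorphic spaces, and close with a Grushin problem), and you have correctly identified where the real work lies. Two points of divergence are worth flagging.

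\emph{Treatment of $\epsilon$.} You propose to eliminate the angular dependence of $p^{\epsilon}$ by a Birkhoff iteration in powers of $\epsilon$ (averaging $q^{\epsilon}$ along the $f^{0}$-flow, then correcting, etc.). The paper does \emph{not} expand in $\epsilon$: it first applies the real action--angle theorem to $f^{\epsilon}$ itself (not $f^{0}$), obtaining a real $\hat\kappa$ with $p^{\epsilon}\circ\hat\kappa^{-1}=\hat f^{\epsilon}(I)+i\epsilon\hat q^{\epsilon}(\theta,I)$; then, for each fixed small $\epsilon$, it constructs a single complex canonical transformation $\kappa$ directly from the complex action integral, via the holomorphic implicit function theorem applied to $p^{\epsilon}(\theta,I)=E$ (writing $\Lambda_{E}^{\epsilon}$ as a graph $I=l^{\epsilon}(\theta,E)$) and a generating function. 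This yields $p^{\epsilon}\circ\kappa^{-1}=g^{\epsilon}(\tilde I)$ exactly at the principal-symbol level, with no formal series in $\epsilon$ to sum. Your Birkhoff route is legitimate in dimension one (no small divisors), but the paper's non-perturbative construction is shorter and makes the smooth $\epsilon$-dependence and the identification $g_{0}^{\epsilon}=\tilde I^{-1}$ immediate. The iteration in the paper is only in $\hbar$, at the operator level, to upgrade $g^{\epsilon}$ to the full $g_{\hbar}^{\epsilon}$.

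\emph{Floquet index.} Because the complex transformation $\kappa$ does not preserve the real phase space, its quantization (the operator $A$ of Proposition~\ref{prop_quantif_A}) lands not in $L^{2}(\mathbb{S}^{1})$ but in a Floquet space $L^{2}_{J}(\mathbb{S}^{1})$, where $J$ is a real phase defect computed as the difference of two real action integrals along a generating loop. The model operator is then diagonalised in the basis $e_{l}(\tilde\theta)=e^{il\tilde\theta}e^{-(i/\hbar)J\tilde\theta}$, with spectrum $\{g_{\hbar}^{\epsilon}(\hbar k-J)\}$; the shift by $J$ is absorbed into the labelling in the final statement. Your sketch implicitly assumes $J=0$, which is harmless for the outline but is a feature you would have to account for when actually building the FIO.
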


\begin{remark}
This result gives a description of the spectrum in a rectangle $R_{C, \epsilon_0}$ which does not depend on the semi-classical parameter $ \hbar$ contrary to the result obtained in the two-dimensional case by Michael Hitrik and Johannes Sjöstrand in \cite{MR2036816} in which the parameters $ \epsilon$ and $ \hbar$ are related. Therefore, we obtain a slightly finer result in the one-dimensional case.
\end{remark}

\begin{remark}
We assume that the level set $ \Lambda_{E_0}$ is connected. However, it should be possible to state a similar result in the case of several connected components using the same basic outline (in this case, we would have to consider Bohr-Sommerfeld quantization conditions for each component and consider the union of these components).
\end{remark}

\section{Proof}
\noindent The proof of our result is divided into two parts:
\begin{enumerate}[1.]
\item we consider a pseudo-differential operator $P^{ \epsilon}_{ \hbar}$ acting on $L^2( \mathbb{S}^1)$ of the form $P^{ \epsilon}_{ \hbar}( \theta, \hbar D_{ \theta})= F^{ \epsilon}_{ \hbar}( \theta, \hbar D_{ \theta}) + i \epsilon Q^{ \epsilon}_{ \hbar}( \theta, \hbar D_{ \theta})$, where $F_{ \hbar}^{ \epsilon}( \theta, \hbar D_{ \theta}) = F^{ \epsilon}( \hbar D_{ \theta}) + \mathcal{O}(\hbar)$ and we prove the same type of result (Theorem \ref{theo_cas_simple}) for this operator;
\item we generalize Theorem \ref{theo_cas_simple} to the case of an operator acting on $L^2( \mathbb{R})$ and satisfying the assumptions (A) to (E).
\end{enumerate}

\subsection{Result in the $L^2( \mathbb{S}^1)$-case}
In this paragraph, we present our result in the case of a pseudo-differential operator acting on $L^2( \mathbb{S}^1)$.\\

\noindent \texttt{Notation:} 
\begin{itemize}
\item $ \mathbb{S}^1$ is the real torus $\mathbb{R} / 2 \pi \mathbb{Z}$;
\item $(T^* \mathbb{S}^1)_{ \mathbb{C}}$ is the complex cotangent space of $ \mathbb{S}^1$: $ (\mathbb{S}^1 + i \mathbb{R}) \times \mathbb{C}$;
\item $L^2( \mathbb{S}^1)$ is the set of $2 \pi$-periodic measurable functions $f$ such that:
$$ \dfrac{1}{2 \pi} \int_0^{ 2 \pi}  \left| f( \theta) \right|^2 d \theta < \infty ;$$
\item $\tub(\mathbb{S}^1 \times \mathbb{R})$ is a tubular neighbourhood of $ \mathbb{S}^1 \times \mathbb{R}$ in $(T^* \mathbb{S}^1)_{ \mathbb{C}}$;
\item $ \Vois(A;B)$ is a neighbourhood of the space $A$ in the space $B$.
\end{itemize}

\noindent We consider a pseudo-differential operator $P^{ \epsilon}_{ \hbar}$ depending smoothly on $ \epsilon$ and acting on $L^2( \mathbb{S}^1)$ of the form:
$$ P^{ \epsilon}_{ \hbar}( \theta, \hbar D_{ \theta}) = F^{ \epsilon}_{ \hbar}( \theta, \hbar D_{ \theta}) + i \epsilon Q^{ \epsilon}_{ \hbar}( \theta, \hbar D_{ \theta}),$$
where $Q^{ \epsilon}_{ \hbar}$ is a selfadjoint pseudo-differential operator depending smoothly on $ \epsilon$ and $F^{ \epsilon}_{ \hbar}$ is a selfadjoint pseudo-differential operator depending smoothly on $\epsilon$ of the form:
$$ F^{ \epsilon}_{ \hbar}( \theta, \hbar D_{ \theta}) = F^{ \epsilon}(\hbar D_{ \theta}) + \mathcal{O}(\hbar) .$$
More precisely, $P^{ \epsilon}_{ \hbar}$ is the Weyl quantization of the symbol $p^{ \epsilon}_{ \hbar}( \theta, I) = f^{ \epsilon}_{ \hbar}( \theta, I) + i \epsilon q^{ \epsilon}_{ \hbar}( \theta, I)$ which is a periodic function in $ \theta$ satisfying the following conditions:
\begin{enumerate}
\item[(A')] $p^{ \epsilon}_{ \hbar}$ is a holomorphic function on a tubular neighbourhood of $ \mathbb{S}^1 \times \mathbb{R}$ such that on this neighbourhood:
\begin{equation} \label{eq_fonction_ordre_2}
\exists C >0, \quad |p^{ \epsilon}_{ \hbar}( \theta, I) | \leq C m( \Re(I)),
\end{equation}
where $m$ is an order function on $ \mathbb{R}$;
\item[(B')] $p^{ \epsilon}_{ \hbar}$ admits an asymptotic expansion in powers of $\hbar$ in the space of holomorphic functions satisfying the bound \eqref{eq_fonction_ordre_2}:
$$ p^{ \epsilon}_{ \hbar}( \theta, I) \sim \sum_{j=0}^{ \infty} p^{ \epsilon}_j( \theta, I) \hbar^j,$$
\item[(C')] the principal symbol, denoted by $p^{ \epsilon}$:
$$ p^{ \epsilon}( \theta, I) := p^{ \epsilon}_0( \theta, I) = f^{ \epsilon}(I) + i \epsilon q^{ \epsilon}( \theta, I), $$
for $( \theta, I) \in \mathbb{S}^1 \times \mathbb{R}$, is elliptic at infinity, \textit{i.e.} for $( \theta, I)$ in a tubular neighbourhood of $ \mathbb{S}^1 \times \mathbb{R}$, there exists $C>0$ such that:
$$ |p^{ \epsilon}( \theta, I) | \geq \dfrac{1}{C} m( \Re(I)), \quad \text{ for $| ( \theta, I)| \geq C$;}$$
\item[(D')] the symbols $f^{ \epsilon}_{ \hbar}$ and $q^{ \epsilon}_{ \hbar}$ are $ \mathbb{R}$-valued analytic functions on $ \mathbb{S}^1 \times \mathbb{R}$ depending smoothly on $\epsilon$.
\end{enumerate}
For $E_0 \in \mathbb{R}$ a fixed real number, we consider the level set:
$$ \Lambda_{E_0} = \lbrace (\theta, I) \in \mathbb{S}^1 \times \mathbb{R}; \left. p^{ \epsilon}( \theta, I) \right|_{ \epsilon = 0}=E_0 \rbrace.$$
We assume that:
\begin{enumerate}
\item[(E')] $ \Lambda_{E_0}$ is regular, \textit{i.e.} $  \left. d(p^{ \epsilon} \right|_{ \epsilon=0}) =  (\left. f^{ \epsilon} \right|_{ \epsilon = 0})' \neq 0$ on $ \Lambda_{E_0}$.
\end{enumerate}

\noindent Assume, for $C>0$ a constant and for $\epsilon_0>0$ a sufficiently small fixed real number, that:
$$ E \in \left\lbrace z \in \mathbb{C}; | \Re(z) -E_0 | < \dfrac{1}{C}, | \Im(z)| < \dfrac{\epsilon_0}{C} \right\rbrace.$$
We consider the following complex neighbourhood of the level set $ \Lambda_{E_0}$:
$$ \Lambda_E^{ \epsilon} = \lbrace ( \theta, I) \in \tub( \mathbb{S}^1 \times \mathbb{R}); p^{ \epsilon}( \theta, I) = E \rbrace .$$
According to Assumption (E'), this level set $ \Lambda_E^{ \epsilon}$ is connected and $(f^{ \epsilon})' \neq 0$ on $ \Lambda_E^{ \epsilon}$ for $ \epsilon$ small enough ($f^{ \epsilon}$ is a local diffeomorphism on $ \Lambda_E^{ \epsilon}$). \\
Let $ \gamma_E$ be a loop in $ \Lambda_E^{ \epsilon}$ generating $ \pi_1( \Lambda_E^{ \epsilon})$ (the fundamental group of $ \Lambda_E^{ \epsilon}$), we define an action integral $ \tilde{I}$ (we will explain later why this integral is well-defined and invertible) by:
$$ \tilde{I}(E) = \dfrac{1}{2\pi} \int_{ \gamma_E} I d \theta. $$

\noindent To describe the spectrum of the operator $P^{ \epsilon}_{ \hbar}$, we have the following result.

\begin{theoa} \label{theo_cas_simple}
Let $P^{ \epsilon}_{ \hbar}$ be a pseudo-differential operator depending smoothly on a small parameter $ \epsilon$ and acting on $L^2( \mathbb{S}^1)$. Let $E_0 \in \mathbb{R}$ such that the hypotheses (A') to (E') are satisfied, in particular the operator $P_{ \hbar}^{ \epsilon}$ is of the form:
$$ P^{ \epsilon}_{ \hbar}( \theta, \hbar D_{ \theta}) = F^{ \epsilon}_{ \hbar}( \theta, \hbar D_{ \theta}) + i \epsilon Q^{ \epsilon}_{ \hbar}( \theta, \hbar D_{ \theta}),$$
where $F_{ \hbar}^{ \epsilon}( \theta, \hbar D_{ \theta}) = F^{ \epsilon}( \hbar D_{ \theta}) + \mathcal{O}( \hbar)$. Let, for $ \epsilon_0>0$ a sufficiently small fixed real number:
$$ R_{C, \epsilon_0} = \left\lbrace z \in \mathbb{C}; | \Re (z)-E_0| <  \dfrac{1}{C}, | \Im(z) | < \dfrac{\epsilon_0}{C} \right\rbrace \quad \text{where $C > 0$ is a constant} .$$
Then, for $0 \leq \epsilon < \epsilon_0$, we have:
$$ \sigma(P^{ \epsilon}_{ \hbar}) \cap R_{C, \epsilon_0} = \lbrace g^{ \epsilon}_{ \hbar}(\hbar k), k \in \mathbb{Z} \rbrace \cap R_{C, \epsilon_0} + \mathcal{O}(\hbar^{ \infty} ),$$
where $g^{ \epsilon}_{ \hbar}$ is an analytic function admitting an asymptotic expansion in powers of $\hbar$ and depending smoothly on $ \epsilon$. Moreover, the first term in the asymptotic expansion of $g^{ \epsilon}_{ \hbar}$, denoted by $g^{ \epsilon}_0$, is the inverse of the action coordinate $ \tilde{I}$.
\end{theoa}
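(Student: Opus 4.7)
The plan is to put $P^\epsilon_\hbar$ into a microlocal normal form depending only on the action variable $I$ near $\Lambda_{E_0}$, so that after conjugation it becomes (modulo $\mathcal{O}(\hbar^\infty)$) an operator of the form $g^\epsilon_\hbar(\hbar D_\theta)$. The spectrum is then read off from the Fourier diagonalization $\hbar D_\theta e^{ik\theta}=\hbar k\, e^{ik\theta}$ on $L^2(\mathbb{S}^1)$. Since $f^\epsilon$ is already independent of $\theta$, the only source of $\theta$-dependence at the principal level is $i\epsilon q^\epsilon(\theta,I)$, which is perfectly suited to a Birkhoff-type construction.

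At the principal level, I would construct a generating function $S^\epsilon(\theta,I)$ such that the complex canonical transformation $\kappa^\epsilon=\exp(H_{S^\epsilon})$ satisfies $p^\epsilon\circ(\kappa^\epsilon)^{-1}(\theta,I)=g^\epsilon_0(I)$ on a neighbourhood of $\Lambda_{E_0}$. The cohomological equation at first order in $\epsilon$ is $(f^\epsilon)'(I)\,\partial_\theta S=-i\bigl(q^\epsilon(\theta,I)-\langle q^\epsilon\rangle_\theta(I)\bigr)$, which is solvable termwise in the $\theta$-Fourier series because $(f^\epsilon)'\neq 0$ on $\Lambda^\epsilon_{E_0}$ by Assumption (E'); higher orders in $\epsilon$ are built by the standard iterative scheme. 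The averaged symbol $f^\epsilon(I)+i\epsilon\langle q^\epsilon\rangle_\theta(I)+\cdots$ is then identified, via the Cauchy formula applied to $\tilde I(E)=(2\pi)^{-1}\oint_{\gamma_E}I\,d\theta$ and the holomorphic implicit function theorem, with the local inverse $g^\epsilon_0$ of the complex action map $\tilde I$; this simultaneously establishes that $\tilde I$ is a biholomorphism from a neighbourhood of $E_0$ onto its image.

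Next I would quantize $\kappa^\epsilon$ by a microlocally invertible Fourier integral operator $U^\epsilon_\hbar$ on $L^2(\mathbb{S}^1)$. Because $\kappa^\epsilon$ is genuinely complex, this requires the FBI/Bargmann framework and Sjöstrand's exponentially weighted spaces $H(\Lambda,m)$ attached to a suitable I-Lagrangian deformation of the real phase space, as developed in \cite{MR1957486,MR2003421,MR2036816,SJ}. Conjugation produces
\begin{equation*}
U^\epsilon_\hbar P^\epsilon_\hbar (U^\epsilon_\hbar)^{-1}=g^\epsilon_0(\hbar D_\theta)+\hbar\,R^\epsilon_{1,\hbar}
\end{equation*}
microlocally on $\Lambda_{E_0}$. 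An inductive Birkhoff procedure in powers of $\hbar$ then removes the $\theta$-dependence of the subprincipal, subsubprincipal, \emph{etc.} symbols: at each step one solves a cohomological equation $\{g^\epsilon_0(I),S_j\}=r_j-\langle r_j\rangle_\theta$, solvable because $(g^\epsilon_0)'\neq 0$. Borel resummation furnishes a holomorphic symbol $g^\epsilon_\hbar(I)\sim\sum_j\hbar^j g^\epsilon_j(I)$, smooth in $\epsilon$, and the microlocal model $g^\epsilon_\hbar(\hbar D_\theta)$ is obtained modulo $\mathcal{O}(\hbar^\infty)$.

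To conclude, I would set up a Grushin problem in the spirit of Hitrik--Sjöstrand: the model $g^\epsilon_\hbar(\hbar D_\theta)-z$ is diagonal in the Fourier basis with eigenvalues $g^\epsilon_\hbar(\hbar k)-z$, hence is invertible outside an $\mathcal{O}(\hbar^\infty)$ neighbourhood of the lattice $\{g^\epsilon_\hbar(\hbar k)\}_{k\in\mathbb{Z}}$. The ellipticity assumption (C') controls the region outside $\tub(\mathbb{S}^1\times\mathbb{R})$ away from $\Lambda_{E_0}$, allowing the microlocal identification to be promoted to a global statement on $L^2(\mathbb{S}^1)$ and yielding the claimed description of $\sigma(P^\epsilon_\hbar)\cap R_{C,\epsilon_0}$. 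The main obstacle I anticipate is the construction and control of the FIO $U^\epsilon_\hbar$ on the compact base $\mathbb{S}^1$ in the complex category: one must verify that the I-Lagrangian deformation and weights realizing $\kappa^\epsilon$ are compatible with $2\pi$-periodicity in $\theta$, and that the resulting Grushin reduction remains uniformly invertible throughout a rectangle $R_{C,\epsilon_0}$ whose size is independent of $\hbar$.
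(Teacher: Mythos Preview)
Your proposal is essentially the paper's proof: construct a complex canonical transformation $\kappa$ so that $p^\epsilon\circ\kappa^{-1}$ depends only on the action variable, quantize $\kappa$ via the FBI/Bargmann machinery of Melin--Sj\"ostrand, iterate in $\hbar$ to reach the normal form $g^\epsilon_\hbar(\hbar D_{\tilde\theta})$, and conclude with a Grushin problem comparing $\tilde P^\epsilon_\hbar-z$ to the model.

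Two points are worth flagging. First, the paper builds $\kappa$ in one stroke from the action integral $\tilde I(E)=(2\pi)^{-1}\oint_{\gamma_E}I\,d\theta$ and a generating function $h(\theta,\tilde I)$ (Proposition~\ref{prop_kappa}), rather than as $\exp(H_{S^\epsilon})$ assembled order by order in $\epsilon$; this yields $g^\epsilon_0=\tilde I^{-1}$ exactly, without a resummation in $\epsilon$. Second, your anticipated obstacle is genuine and is precisely what the paper has to handle: the phase $\psi(x,y)$ of the quantized FIO is multi-valued on the circle, changing along the fundamental loop by a real constant $J$ (the Floquet index, computed as a difference of real actions $\int\Re(\xi\,dx)$ over $\Lambda_{\Phi_1}$ and $\Lambda_{\Phi_2}$). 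The resolution is to let the FIO map into the Floquet space $L^2_J(\mathbb{S}^1)$ of functions with $u(\theta+2\pi)=e^{-iJ/\hbar}u(\theta)$; the model spectrum on that space is $\{g^\epsilon_\hbar(\hbar k-J)\}_{k\in\mathbb{Z}}$, and the shift by $J$ is absorbed into $g^\epsilon_\hbar$ in the final statement. With this adjustment your Grushin argument goes through, and the uniformity of $R_{C,\epsilon_0}$ in $\hbar$ comes from the fact that the model inverse $(S^\epsilon_\hbar-z)^{-1}$ is only $\mathcal{O}(\hbar^{-1})$ away from the lattice, which is harmless against the $\mathcal{O}(\hbar^\infty)$ remainders.
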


\subsection{Proof of Theorem \ref{theo_cas_simple}}
To prove Theorem \ref{theo_cas_simple}, we proceed as follows.
\begin{enumerate}
\item[Step 1:] we construct a canonical transformation $ \kappa$ and complex action-angle coordinates $( \tilde{\theta}, \tilde{I})$, where :
\begin{align*}
\kappa: \Vois( \Lambda_E^{ \epsilon}, (T^* \mathbb{S}^1)_{ \mathbb{C}}) & \longrightarrow \Vois( \tilde{I}=cst, (T^* \mathbb{S}^1)_{ \mathbb{C}}), \\
( \theta, I) & \longmapsto ( \tilde{\theta}, \tilde{I}),
\end{align*}
such that:
$$ p^{ \epsilon} \circ \kappa^{-1}( \tilde{\theta}, \tilde{I}) = g^{ \epsilon}( \tilde{I}).$$
\item[Step 2:] we quantize the canonical transformation $ \kappa$, by following this procedure:
\begin{itemize}
\item[1.] we conjugate, by a unitary transform, the operator $P_{ \hbar}^{ \epsilon}$ acting on $L^2( \mathbb{S}^1)$ in an operator $ \tilde{P}_{ \hbar}^{\epsilon}$ acting on some Bargmann space, therefore their spectra are equal;
\item[2.] we construct a unitary operator $\tilde{U}_0$ such that microlocally:
$$ \tilde{U}_0 \tilde{P}^{ \epsilon}_{ \hbar} = g^{ \epsilon} \left( \dfrac{\hbar}{i} \dfrac{\partial}{\partial \tilde{\theta}} \right)  \tilde{U}_0 + \mathcal{O}(\hbar);$$
then, by an iterative procedure, we construct a unitary operator $\tilde{U}$ such that microlocally:
$$ \tilde{U} \tilde{P}^{ \epsilon}_{ \hbar} = g^{ \epsilon}_{ \hbar} \left( \dfrac{\hbar}{i} \dfrac{\partial}{\partial \tilde{\theta}} \right) \tilde{U} + \mathcal{O}(\hbar^{ \infty}).$$
\end{itemize}
\item[Step 3:] we determine the spectrum of the operator $P^{ \epsilon}_{ \hbar}$ by using two Grushin problems, one for the operator $ \tilde{P}_{ \hbar}^{ \epsilon}$ and one for the operator $ g_{ \hbar}^{ \epsilon} \left( \dfrac{\hbar}{i} \dfrac{\partial}{\partial \tilde{\theta}} \right)$ obtained in Step 2.
\end{enumerate}

\subsubsection{Construction of the canonical transformation $ \kappa$}
This construction is analogous to what is done in \cite{MR2036816}. \\
We consider:
$$ \Lambda_E^{ \epsilon} = \lbrace ( \theta, I) \in \tub(\mathbb{S}^1 \times \mathbb{R}); p^{ \epsilon}( \theta, I) = E \rbrace .$$
Notice that the function $p^{ \epsilon}-E$ is holomorphic and that:
$$ \dfrac{\partial p^{ \epsilon}}{\partial I}( \theta, I) = \dfrac{d f^{ \epsilon}}{d I}(I) + i \epsilon \dfrac{\partial q^{ \epsilon}}{\partial I}( \theta, I) \neq 0 ,$$
for $ \epsilon$ sufficiently small because $ (f^{ \epsilon})' \neq 0$ and $ \partial_I q^{ \epsilon}$ is bounded on $ \Lambda_E^{ \epsilon}$. Therefore by applying the holomorphic implicit function theorem, we obtain that $ \Lambda_E^{ \epsilon}$ can be written as:
$$ \Lambda_E^{ \epsilon} = \lbrace ( \theta, I) \in \tub(\mathbb{S}^1 \times \mathbb{R}); I= l^{ \epsilon}( \theta,E) \rbrace,$$
where $l^{ \epsilon}$ is a holomorphic function depending smoothly on $ \epsilon$. \\
We can now define an action coordinate $\tilde{I}$ by integrating the $1$-form $Id \theta$. Since $ \Lambda_E^{ \epsilon}$ is homotopy equivalent to $ \mathbb{S}^1$, then there exists a unique loop $ \gamma_E$ in $ \Lambda_E^{ \epsilon}$ whose homotopy class generates $ \pi_1( \Lambda_E^{ \epsilon})$ (up to orientation), and we define the coordinate $ \tilde{I}$ by:
$$ \tilde{I}(E) = \dfrac{1}{2\pi} \int_{ \gamma_E} I d \theta. $$
We can choose the loop $ \gamma_E$ defined by the following parametrization, for $t \in [0, 2 \pi[$:
$$
\left\lbrace
\begin{split}
\theta(t) & = t; \\
I(t) & = l^{ \epsilon}( \theta(t), E).
\end{split}
\right.$$
Therefore, we can rewrite $ \tilde{I}(E)$ as:
$$ \tilde{I}(E) = \dfrac{1}{2 \pi} \int_0^{ 2 \pi} l^{ \epsilon}( \theta(t), E) dt.$$
Since the $1$-form $ \left. I d \theta \right|_{ \Lambda_E^{ \epsilon}}$ is closed, by applying Stokes formula we obtain that $ \tilde{I}(E)$ depends only on the homotopy class of the loop $ \gamma_E$ in $ \Lambda_E^{ \epsilon}$.\\
Moreover, notice that (since $f^{ \epsilon}$ is a local diffeomorphism):
\begin{align*}
\dfrac{d \tilde{I}}{dE}(E) & = \dfrac{1}{2 \pi} \int_0^{2 \pi} \dfrac{\partial l^{ \epsilon}}{\partial E}( \theta(t) , E) dt, \\
& = \dfrac{1}{2 \pi} \int_0^{2 \pi} \left( \dfrac{d(f^{ \epsilon})^{-1}}{dE}(E) + \mathcal{O}( \epsilon) \right) dt, \\
& = \dfrac{d(f^{ \epsilon})^{-1}}{dE}(E) + \mathcal{O}( \epsilon) \neq 0.
\end{align*}
Therefore by using the holomorphic inverse function theorem, we see that the map $ E \longmapsto \tilde{I}(E)$ is a local diffeomorphism. \\

\noindent We are now able to construct the canonical transformation $ \kappa$.

\begin{prop} \label{prop_kappa}
There exists a canonical transformation:
\begin{align*}
\kappa: \Vois( \Lambda_E^{ \epsilon}, (T^* \mathbb{S}^1)_{ \mathbb{C}}) & \longrightarrow \Vois( \tilde{I}=cst , (T^* \mathbb{S}^1)_{ \mathbb{C}}), \\
( \theta, I) & \longmapsto ( \tilde{\theta}, \tilde{I}),
\end{align*}
such that:
$$ p^{ \epsilon} \circ \kappa^{-1}( \tilde{\theta}, \tilde{I}) = g^{ \epsilon}( \tilde{I}).$$
Hence, the function $g^{ \epsilon}$ is the inverse of the action integral $ \tilde{I}$.
\end{prop}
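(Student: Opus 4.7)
My plan is to view Proposition~\ref{prop_kappa} as the complex-analytic Liouville--Arnold action-angle theorem in one degree of freedom. The function $g^{\epsilon}$ is forced to be $\tilde{I}^{-1}$: since $E\mapsto\tilde{I}(E)$ has already been shown to be a local biholomorphism depending smoothly on $\epsilon$, I would set $g^{\epsilon}:=\tilde{I}^{-1}$ and promote $\tilde{I}$ to a holomorphic function on $\Vois(\Lambda_E^{\epsilon},(T^*\mathbb{S}^1)_{\mathbb{C}})$ by the formula $\tilde{I}(\theta,I):=\tilde{I}(p^{\epsilon}(\theta,I))$. With this extension the identity $p^{\epsilon}=g^{\epsilon}\circ\tilde{I}$ is tautological, and the whole task reduces to producing a conjugate angle variable $\tilde{\theta}$ so that $\kappa:(\theta,I)\mapsto(\tilde{\theta},\tilde{I})$ is a symplectic map onto a neighborhood of the circle $\{\tilde{I}=\mathrm{cst}\}$.

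For the angle variable I plan to use the Hamiltonian flow. Let $X_{\tilde{I}}$ denote the holomorphic Hamiltonian vector field of $\tilde{I}$ on $\tub(\mathbb{S}^1\times\mathbb{R})$ for the symplectic form $dI\wedge d\theta$; since $\tilde{I}=\tilde{I}(E)\circ p^{\epsilon}$, one has $X_{\tilde{I}}=\tilde{I}'(E)\,X_{p^{\epsilon}}$ on $\Lambda_E^{\epsilon}$. A direct computation, using the relation $\partial_E l^{\epsilon}=1/\partial_I p^{\epsilon}$ inherited from the defining equation $p^{\epsilon}(\theta,l^{\epsilon}(\theta,E))=E$, shows that the holomorphic period of $X_{p^{\epsilon}}$ along $\gamma_E$ equals $2\pi\,\tilde{I}'(E)$, so $X_{\tilde{I}}$ has period exactly $2\pi$ on each $\Lambda_E^{\epsilon}$. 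Fixing $\theta_0\in\mathbb{S}^1$ and using the holomorphic transversal $\sigma(E):=(\theta_0,l^{\epsilon}(\theta_0,E))$, I would define $\tilde{\theta}(\theta,I)\in\mathbb{C}/2\pi\mathbb{Z}$ as the unique (mod $2\pi$) time $t$ such that $\exp(t X_{\tilde{I}})(\sigma(p^{\epsilon}(\theta,I)))=(\theta,I)$; the holomorphic implicit function theorem, applied transversally to the flow, then guarantees that $\tilde{\theta}$ is holomorphic on a uniform complex neighborhood and smooth in $\epsilon$.

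Canonicity of $\kappa$ follows at once: by construction $X_{\tilde{I}}(\tilde{\theta})=1$ and $X_{\tilde{I}}(\tilde{I})=0$, so $\{\tilde{\theta},\tilde{I}\}=1$, which in one degree of freedom is equivalent to $\kappa^{*}(d\tilde{I}\wedge d\tilde{\theta})=dI\wedge d\theta$. The identity $p^{\epsilon}\circ\kappa^{-1}(\tilde{\theta},\tilde{I})=g^{\epsilon}(\tilde{I})$ is then an immediate consequence of the definitions. The delicate part of the argument will be ensuring that the whole construction extends \emph{uniformly in $\epsilon$} to a genuine complex tubular neighborhood and that the complex period of $X_{\tilde{I}}$ is indeed $2\pi$ on every nearby level set; the latter reduces to Stokes' theorem for the closed holomorphic $1$-form $I\,d\theta$ on $\Lambda_E^{\epsilon}\simeq\mathbb{S}^1$, combined with the invariance of the homotopy class of $\gamma_E$ under small perturbations of $(E,\epsilon)$. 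Once these points are settled, what remains is standard Darboux-type bookkeeping.
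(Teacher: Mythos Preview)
Your argument is correct and follows the classical Liouville--Arnold route: normalize the period of the Hamiltonian flow to $2\pi$ by passing from $p^{\epsilon}$ to $\tilde I\circ p^{\epsilon}$, and then read off the angle as the flow-time from a holomorphic transversal. The period computation via $\partial_E l^{\epsilon}=1/\partial_I p^{\epsilon}$ is exactly right, and in one complex degree of freedom the single relation $\{\tilde\theta,\tilde I\}=1$ is indeed equivalent to $\kappa^*(d\tilde I\wedge d\tilde\theta)=dI\wedge d\theta$. The only point you flag as delicate---that the complex flow of $X_{\tilde I}$ is genuinely $2\pi$-periodic on a full tubular neighborhood---follows because each $\Lambda_E^{\epsilon}$ is a holomorphic annulus on which $X_{\tilde I}$ is nowhere vanishing, so the flow is conjugate to a translation and the scalar period you computed controls it globally.

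The paper takes a different path: it builds $\kappa$ from a \emph{generating function}. Writing $\omega=l^{\epsilon}(\theta,E)\,d\theta$ on $\Lambda_E^{\epsilon}$, one has $d\omega=0$, and after subtracting the period $\tilde I(E)$ one obtains a single-valued primitive $h(\theta,E)$ with $dh=\omega-\tilde I(E)\,d\theta$. Lifting to the universal cover gives $\hat h(\hat\theta,E)=h(\pi(\hat\theta),E)+\hat\theta\,\tilde I(E)$, and after the change of variable $E\mapsto\tilde I$ this becomes a generating function $\tilde h(\theta,\tilde I)$ with $\kappa:(\theta,\partial_\theta\tilde h)\mapsto(\partial_{\tilde I}\tilde h,\tilde I)$. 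Your flow construction is arguably more geometric and makes the $2\pi$-periodicity manifest; the paper's construction is more explicit (one has the generating function in hand) and dovetails naturally with the subsequent quantization step, where $\kappa$ is implemented via an oscillatory integral whose phase is precisely such a generating function. Both arguments give $\kappa=\mathrm{id}$ at $\epsilon=0$, in your case because $X_{\tilde I}$ reduces to $\partial_\theta$.
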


\begin{proof}
Let $ \delta_0$ be a positive real number and let $ \mathbb{S}^1 + i ]- \delta_0, \delta_0[$ be the projection (on the first coordinate) of the tubular neighbourhood of $\mathbb{S}^1 \times \mathbb{R}$ used in the definition of the level set $ \Lambda_E^{ \epsilon}$. Let $ \pi : \mathbb{R} + i ]- \delta_0, \delta_0[ \longrightarrow \mathbb{S}^1 + i ]- \delta_0, \delta_0[$ be the projection. We denote by $ \hat{\theta}$ some complex number such that $\theta = \pi( \hat{\theta})$ with $\theta \in \mathbb{S}^1 + i ]- \delta_0, \delta_0[$. \\
We are going to prove that there exists a holomorphic function $h( \theta, \tilde{I})$ such that locally we have:
\begin{align*}
\kappa: \Vois( \Lambda_E^{ \epsilon}, (T^* \mathbb{S}^1)_{ \mathbb{C}}) & \longrightarrow  \Vois(\tilde{I}=cst, (T^* \mathbb{S}^1)_{ \mathbb{C}}), \\
\left( \theta, \dfrac{\partial h}{\partial \theta} \right) & \longmapsto \left( \dfrac{\partial h}{\partial \tilde{I}}, \tilde{I} \right).
\end{align*}
Recall that $E$ is a fixed complex number; we consider the $1$-form:
$$ \omega = \left. I d \theta \right|_{ \Lambda_E^{ \epsilon}} = l^{ \epsilon}( \theta, E) d \theta .$$
$l^{ \epsilon}$ is a holomorphic function, so we have: $d \omega = 0$ on $ \Lambda_E^{ \epsilon}$. \\
Thus, since $ \Lambda_E^{ \epsilon}$ is homotopic to $ \mathbb{S}^1$, there exists a function $h( \theta, E)$ defined on $ \Lambda_E^{ \epsilon}$ such that $dh = \omega$ if and only if:
$$ \dfrac{1}{2 \pi} \int_{ \gamma_E} \omega = 0,$$
where $ \gamma_E$ is the loop previously defined. \\
Therefore, there exists a function $h( \theta, E)$ on $\Lambda_E^{ \epsilon}$ such that:
$$ dh = \omega - \dfrac{1}{2 \pi} \int_{ \gamma_E} \omega = \omega - \dfrac{1}{2 \pi} \int_{  \gamma_E} I d \theta = \omega - \tilde{I}(E) .$$
Moreover, there also exists a function $ \hat{h}( \hat{\theta}, E)$ on $( \mathbb{R} + i ]- \delta_0, \delta_0[) \times \mathbb{C}$ such that $d \hat{h} = \hat{ \omega} = \pi^* \omega$. We can choose:
$$ \hat{h}( \hat{\theta},E) = h( \pi( \hat{\theta}),E) + \hat{\theta} \tilde{I}(E).$$
Then:
$$ d \hat{h} = \pi^*dh + \tilde{I} = \hat{\omega} .$$
Since $E \longmapsto \tilde{I}(E)$ is a local diffeomorphism, then we define a function $ \check{h}( \hat{\theta}, \tilde{I})$ by:
$$ \check{h}( \hat{\theta}, \tilde{I}) = \hat{h}( \hat{\theta}, E( \tilde{I})) = h( \pi( \hat{\theta}), E( \tilde{I})) + \hat{\theta} \tilde{I}, $$
where $E( \tilde{I})$ is the inverse function of $ \tilde{I}(E)$. \\
By definition of $ \check{h}$, there exists a function $\tilde{h}( \theta, \tilde{I})$ defined by:
$$ \tilde{h}( \theta, \tilde{I}) = \check{h}( \pi^* \theta, \tilde{I}).$$
Let:
$$ \kappa( \theta, I) = \left( \dfrac{\partial \tilde{h}}{\partial \tilde{I}}( \theta, \tilde{I}), \tilde{I} \right) .$$
This function is well-defined because it does not depend on the choice of the class representative of $ \theta$. Besides, for $( \theta, I) \in \Lambda_E^{ \epsilon}$, by construction we have:
$$ I = l^{ \epsilon}( \theta, E) = \dfrac{\partial \tilde{h}}{\partial \theta}( \theta, \tilde{I}).$$
Therefore $ \kappa$ is locally a holomorphic symplectic transformation which sends $ \Lambda_E^{ \epsilon}$ on $\lbrace \tilde{I} = cst \rbrace$ (because $ \tilde{I}$ depends only on $E$), thus:
$$ p^{ \epsilon} \circ \kappa^{-1}( \tilde{\theta}, \tilde{I}) = g^{ \epsilon}( \tilde{I}) ,$$
because $ \dfrac{\partial}{\partial \tilde{\theta}}$ the tangent vector field to $ \lbrace \tilde{I} = cst \rbrace$ is sent by $ \kappa^{-1}$ on the tangent vector field to $ \Lambda_E^{ \epsilon}$, in other words:
$$ \dfrac{\partial}{\partial \tilde{\theta}}( p^{ \epsilon} \circ \kappa^{-1}( \tilde{\theta}, \tilde{I})) = 0 \quad \Rightarrow \quad p^{ \epsilon} \circ \kappa^{-1}( \tilde{\theta}, \tilde{I}) = g^{ \epsilon}( \tilde{I}).$$
Moreover, we can deduce from this equation that $g^{ \epsilon}$ is the inverse of the action integral $ \tilde{I}$ because:
$$ g^{ \epsilon}( \tilde{I}(E)) = p \circ \kappa^{-1}( \tilde{\theta}, \tilde{I}(E)) = E.$$
\end{proof}

\begin{remark}
If $ \epsilon = 0$, $ \kappa$ is the identity (of generating function $h( \theta, \tilde{I}) = \theta \tilde{I}$).
\end{remark}

\subsubsection{Quantization of the canonical transformation $ \kappa$}
We want to construct an operator $U_0$ associated with the canonical transformation $ \kappa$. In this case, we can not apply Egorov's theorem, therefore we are going to write the canonical transformation $ \kappa$ as a composition of canonical transformations that will be easier to quantize. Before doing so, recall that one can quantize a canonical transformation if it comes from some FBI transform (see for example \cite[Chapter 13]{MR2952218}).
\\

\noindent \texttt{Notation:} Let $ \Phi$ be a strictly plurisubharmonic $ \mathbb{R}$-valued quadratic form on $ \mathbb{C}$. We introduce the following notation:
\begin{itemize}
\item $L(dz)$ is the Lebesgue measure $\dfrac{i}{2} dz \wedge d \overline{z}$;
\item $L^2( \mathbb{C}, \Phi) = L^2( \mathbb{C}, e^{-2 \Phi/\hbar} L( d z))$ is the set of measurable functions $f$ such that:
$$ \int_{ \mathbb{C}} |f(z)|^2 e^{-2 \Phi(z)/\hbar} L(dz) < + \infty ;$$
\item $L^2( \mathbb{C}, \Phi, m) = L^2( \mathbb{C}, m^2 e^{-2 \Phi/\hbar} L( d z))$ is the set of measurable functions $f$ such that:
$$ \int_{ \mathbb{C}} |f(z)|^2 m(z)^2 e^{-2 \Phi(z)/\hbar} L(dz) < + \infty ;$$
where $m$ is a function (from now one, $m$ will denote the order function associated with the operator $P_{ \hbar}^{ \epsilon}$ in Assumption (A));
\item $H( \mathbb{C}, \Phi) = Hol( \mathbb{C}) \cap L^2( \mathbb{C}, \Phi) $ is the set of holomorphic functions in $L^2( \mathbb{C}, \Phi)$;
\item $H( \mathbb{C}, \Phi, m) = Hol( \mathbb{C}) \cap L^2( \mathbb{C}, \Phi, m) $ is the set of holomorphic functions in $L^2( \mathbb{C}, \Phi, m)$.
\end{itemize}

\begin{remark}
Since the order function $m$ is such that $m \geq 1$, we have: 
$$ H( \mathbb{C}, \Phi, m) \subset H( \mathbb{C}, \Phi).$$
\end{remark}

\noindent Recall the definition of the FBI (Fourier-Bros-Iagoniltzer) transform in dimension one (see for example \cite[Chapter 13]{MR2952218}).

\begin{defi}[FBI transform and its canonical transformation] \label{defi_FBI_trnasform}
Let $ \phi(z,x)$ be a holomorphic quadratic function on $ \mathbb{C} \times \mathbb{C}$ such that:
\begin{enumerate}
\item[1.] $ \Im \left( \dfrac{\partial^2 \phi}{\partial x^2} \right)$ is a positive real number;
\item[2.] $ \dfrac{\partial^2 \phi}{\partial x \partial z} \neq 0$.
\end{enumerate}
The FBI transform associated with the function $ \phi$ is the operator $T_{ \phi}$ defined on $ \mathcal{S}( \mathbb{R})$ by:
$$ T_{ \phi} u(z) = \dfrac{c_{ \phi}}{\hbar^{3/4}} \int_{ \mathbb{R}} e^{(i/\hbar) \phi(z,x)} u(x) dx,$$
where:
$$
c_{ \phi} = \dfrac{1}{2^{1/2} \pi^{3/4}} \dfrac{| \det \partial_x \partial_z \phi|}{(\det \Im (\partial^2_x \phi ))^{1/4}}.
$$
We define a canonical transformation associated with $ T_{\phi}$ by:
\begin{align*}
\kappa_{ \phi}: \mathbb{C} \times \mathbb{C} & \longrightarrow \mathbb{C} \times \mathbb{C}, \\
(x, - \partial_x \phi(z,x)) & \longmapsto (z, \partial_z \phi(z,x)).
\end{align*}
\end{defi}

\noindent We have the following property on FBI transform (see for example \cite[p.309]{MR2952218}).

\begin{prop} \label{prop_Tphi}
Let, for $z \in \mathbb{C}$:
$$ \Phi(z) = \sup_{x \in \mathbb{R}} \left( - \Im (\phi(z,x)) \right) .$$
Then $T_{ \phi}: L^2( \mathbb{R}) \longrightarrow H( \mathbb{C}, \Phi)$ is a unitary transformation. \\
Moreover, if $T_{ \phi}^* : L^2( \mathbb{C}, \Phi) \longrightarrow L^2( \mathbb{R})$ is the adjoint of $T_{ \phi}$, then:
$$ T_{ \phi}^* v(x) = c_{ \phi} h^{-3/4} \int_{ \mathbb{C}} e^{(i/ \hbar) \overline{\phi(z,x)}} e^{-2 \Phi(z) / \hbar} v(z) L(dz).$$
And we have:
\begin{enumerate}
\item[1.] $T_{ \phi} T_{ \phi}^* = 1$ on $H( \mathbb{C}, \Phi)$;
\item[2.] $T_{ \phi}^* T_{ \phi} = 1$ on $L^2( \mathbb{R})$.
\end{enumerate}
\end{prop}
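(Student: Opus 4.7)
The plan is to proceed in four steps, exploiting that $\phi$ is quadratic, so that ``stationary phase'' expressions are actually exact Gaussian integrals.

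First, I would show that $T_\phi u$ is holomorphic in $z$ and lies in $H(\mathbb{C}, \Phi)$ for $u \in \mathcal{S}(\mathbb{R})$. Holomorphy follows from differentiation under the integral sign, which is justified because the hypothesis $\Im(\partial_x^2 \phi) > 0$ makes the integrand decay like a Gaussian in $x$ once the imaginary part of $\phi$ is extracted. For the weight, since $\phi$ is quadratic in $x$, the function $x \mapsto -\Im(\phi(z,x))$ has a unique real critical point $x_c(z)$, and $\Phi(z) = -\Im(\phi(z, x_c(z)))$ is an $\mathbb{R}$-valued quadratic form; strict plurisubharmonicity comes from $\partial_x \partial_z \phi \neq 0$. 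A saddle-point estimate at $x_c(z)$ then gives $|T_\phi u(z)| \leq C\, e^{\Phi(z)/\hbar}$ for $u$ in a dense subspace, showing that $T_\phi: L^2(\mathbb{R}) \to H(\mathbb{C},\Phi)$ is bounded.

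Second, the formula for $T_\phi^*$ is derived by unpacking the adjoint relation
$$
\int_{\mathbb{C}} T_\phi u(z)\, \overline{v(z)}\, e^{-2 \Phi(z)/\hbar}\, L(dz) = \int_{\mathbb{R}} u(x)\, \overline{T_\phi^* v(x)}\, dx,
$$
substituting the definition of $T_\phi u$, and exchanging the order of integration (justified for $u \in \mathcal{S}(\mathbb{R})$ and $v$ compactly supported by the Gaussian bounds above, then extended by density). The complex conjugation of $\phi$ in the resulting kernel gives the stated formula.

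Third, and this is where the main technical content lies, I would establish $T_\phi^* T_\phi = \mathrm{Id}$ on $L^2(\mathbb{R})$. Writing the composition as a kernel operator,
$$
T_\phi^* T_\phi u(x) = \frac{|c_\phi|^2}{\hbar^{3/2}} \int_{\mathbb{R}} \left( \int_{\mathbb{C}} e^{(i/\hbar)[\overline{\phi(z,x)} - \phi(z,y)] - 2 \Phi(z)/\hbar}\, L(dz) \right) u(y)\, dy,
$$
the inner integral in $z$ has a quadratic exponent in $(z,\bar z)$ because $\phi$ is quadratic and $\Phi$ is an $\mathbb{R}$-quadratic form, so it can be computed exactly by completing the square. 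The precise normalization factor $c_\phi$ is chosen so that the resulting Gaussian kernel in $(x,y)$ is a semiclassical approximate identity which, again by the quadratic structure, acts as the exact $\delta(x-y)$.

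Finally, once $T_\phi^* T_\phi = \mathrm{Id}$ is known, $T_\phi$ is an isometry, so $T_\phi T_\phi^*$ is the orthogonal projector from $L^2(\mathbb{C}, \Phi)$ onto the closed range of $T_\phi$, which is contained in $H(\mathbb{C}, \Phi)$ by holomorphy. To upgrade this to equality on all of $H(\mathbb{C},\Phi)$, I would show that the integral kernel of $T_\phi T_\phi^*$ coincides with the reproducing (Bergman-type) kernel of $H(\mathbb{C},\Phi)$, using the same completion-of-squares computation in $x$ as in the third step. The main obstacle is the bookkeeping of quadratic forms: one must match the coefficients of $\phi$, of $\Phi$, and of $c_\phi$ so that all the Gaussian integrals collapse exactly to $\mathrm{Id}$ and to the reproducing kernel, and one must carefully justify the Fubini exchanges and the distributional limits along the way.
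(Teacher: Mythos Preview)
The paper does not actually prove this proposition: it is stated as a known fact with the citation ``see for example \cite[p.309]{MR2952218}'' (Zworski's textbook on semiclassical analysis), and no argument is supplied beyond that reference. So there is no ``paper's own proof'' to compare against.

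That said, your sketch follows the standard textbook route and is essentially correct in spirit. A few remarks on places where the write-up would need tightening if you were to carry it out in full:
\begin{itemize}
\item In your third step you assert that the Gaussian kernel in $(x,y)$ obtained from the inner $z$-integral ``acts as the exact $\delta(x-y)$.'' This is the crux, and it is not automatic from the quadratic structure alone: one must verify that the imaginary and real parts of the exponent combine so that the $z$-integral produces a kernel that is a multiple of $\delta(x-y)$ rather than a heat-type smoothing kernel. In the standard treatment this is done by checking that the critical value of the phase in $z$ is purely real when $x=y$ and has strictly positive imaginary part when $x\neq y$, then invoking Plancherel in the oscillatory direction.
\item In the fourth step, showing that the range of $T_\phi$ is \emph{all} of $H(\mathbb{C},\Phi)$ (surjectivity) is usually done not by identifying the Bergman kernel directly, but by exhibiting an explicit inverse via a contour-deformation argument, or by reducing to the model case $\phi(z,x)=\tfrac{i}{2}(z-x)^2$ through a metaplectic change of variables. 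Your proposed route through the reproducing kernel works too, but the bookkeeping you flag as ``the main obstacle'' is genuinely the whole content of the proof.
\end{itemize}
None of this is a gap so much as a warning that the ``exact Gaussian'' computations are where all the work lives, and your sketch correctly identifies this.
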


\begin{remark}
The canonical transformation $ \kappa_{ \phi}$ sends $ \mathbb{R}^2$ on the $IR$-manifold ($I$-Lagrangian and $R$-symplectic) $ \Lambda_{ \Phi} = \left\lbrace \left( z, \dfrac{2}{i} \dfrac{\partial \Phi}{\partial z}(z) \right); z \in \mathbb{C} \right\rbrace$ where $ \Phi$ is a strictly plurisubharmonic $ \mathbb{R}$-valued quadratic form associated with $ \phi$ in the sense of Proposition \ref{prop_Tphi}.
\end{remark}

\noindent First, we have the following results (see for example \cite[p.139-142]{SJ} or \cite{MR2003421}).

\begin{prop}\label{prop_H_phi_0}
Let $P^{ \epsilon}_{ \hbar}$ be a pseudo-differential operator acting on $L^2( \mathbb{R})$ and satisfying the hypothesis (A) to (D). Let $ \Phi_0$ be a strictly plurisubharmonic $ \mathbb{R}$-valued quadratic form on $ \mathbb{C}$ (we can associate with $\Phi_0$ a holomorphic quadratic function $ \phi_0$ in the sense of Proposition \ref{prop_Tphi}).
Let  $\tilde{P}^{ \epsilon}_{ \hbar} = T_{ \phi_0} \circ P^{ \epsilon}_{ \hbar} \circ T_{ \phi_0}^*$. Then:
\begin{enumerate}
\item[1.] $ \tilde{P}^{ \epsilon}_{ \hbar} : H( \mathbb{C}, \Phi_0, \tilde{m}) \longrightarrow H( \mathbb{C}, \Phi_0)$ is uniformly bounded in $\hbar$ and $ \epsilon$ (for $ \hbar < 1$ and $ \epsilon < \epsilon_0$ where $ \epsilon_0 $ is a fixed positive real number), where $ \tilde{m} = m \circ \kappa_{ \phi_0}^{-1}$ is an order function on $ \Lambda_{ \Phi_0} = \left\lbrace (y, \eta) \in \mathbb{C}^2; \eta = \dfrac{2}{i} \dfrac{\partial \Phi_0}{\partial y}(y) \right\rbrace$ (recall that $m$ is the order function associated with the operator $P_{ \hbar}^{ \epsilon}$ in Hypothesis (A));
\item[2.] $ \tilde{P}^{ \epsilon}_{ \hbar}$ is given by the contour integral:
$$ \tilde{P}^{ \epsilon}_{ \hbar} u(x) = \dfrac{1}{2 \pi \hbar} \int \! \! \! \int_{\Gamma(x)} e^{(i/\hbar) (x-y)\eta} \tilde{p}^{ \epsilon}_{ \hbar} \left( \dfrac{x+y}{2}, \eta \right) u(y) dy d\eta, $$
where $ \Gamma(x) = \left\lbrace (y, \eta) \in \mathbb{C}^2; \eta = \dfrac{2}{i} \dfrac{\partial \Phi_0}{\partial x} \left( \dfrac{x+y}{2} \right) \right\rbrace$ and where the symbol $ \tilde{p}^{ \epsilon}_{ \hbar}$ is given by $ \tilde{p}^{ \epsilon}_{ \hbar} = p^{ \epsilon}_{ \hbar} \circ \kappa_{ \phi_0}^{-1}$.
\end{enumerate} 
\end{prop}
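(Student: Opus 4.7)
The plan is to establish part 2 by a direct oscillatory-integral computation and then deduce part 1 from the resulting contour-integral representation combined with a Schur-type estimate in the weighted holomorphic spaces.

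I would first substitute the explicit formulas from Proposition \ref{prop_Tphi} for $T_{\phi_0}^*$ and $T_{\phi_0}$, sandwiching the Weyl-quantization integral defining $P^{\epsilon}_{\hbar}$, to obtain a quadruple oscillatory integral in $(x,y,\xi,w)$ with total phase
$$\Psi(z,w,x,y,\xi) = \phi_0(z,x) + (x-y)\xi - \overline{\phi_0(w,y)} + 2i\Phi_0(w).$$
For each fixed $z$, the weight $e^{-2\Phi_0(w)/\hbar}$ together with the $w$-integration acts as a reproducing kernel on $H(\mathbb{C},\Phi_0)$; using the holomorphy of $v = T_{\phi_0} u$, one may deform the $(w,y,\xi)$-contour. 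After stationary phase (or equivalent contour deformation justified by the strict plurisubharmonicity of $\Phi_0$), one reaches the representation of $\tilde{P}^{\epsilon}_{\hbar}$ as an integral over $\Gamma(x)$ exactly because the critical locus is parametrized by $\kappa_{\phi_0}$, and Egorov-type cancellations transport $p^{\epsilon}_{\hbar}$ to $\tilde{p}^{\epsilon}_{\hbar} = p^{\epsilon}_{\hbar} \circ \kappa_{\phi_0}^{-1}$.

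For part 1, once the contour-integral formula is in hand, the kernel of $\tilde{P}^{\epsilon}_{\hbar}$ on $L^2(\mathbb{C},\Phi_0)$ admits Gaussian off-diagonal decay coming from the strict convexity of $\Phi_0$ along $\Gamma(x)$, while the pointwise bound $|\tilde{p}^{\epsilon}_{\hbar}| \leq C\tilde{m}$ on $\Lambda_{\Phi_0}$ (the translation of assumption (A) via $\kappa_{\phi_0}$) controls the size of the symbol. A Schur test weighted by $\tilde{m}$, using the slow variation property of the order function (condition 2 in the definition of $m$) to compare $\tilde{m}$ at nearby points of $\Lambda_{\Phi_0}$, then yields uniform boundedness of $\tilde{P}^{\epsilon}_{\hbar}: H(\mathbb{C},\Phi_0,\tilde{m}) \to H(\mathbb{C},\Phi_0)$ with constants independent of $\hbar \in (0,1]$ and $\epsilon \in (0,\epsilon_0]$.

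The main obstacle is the contour deformation step: justifying that the $(x,y,\xi)$-integrals can be replaced by an integral along $\Gamma(x)$ requires careful control of the holomorphic extension of $p^{\epsilon}_{\hbar}$ in a tubular neighbourhood (guaranteed by assumption (A)) together with exponential decay bounds coming from $\Im\Psi$. The presence of both the semiclassical parameter $\hbar$ and the perturbation parameter $\epsilon$ forces all constants in the estimates to remain uniform, which is the only genuinely delicate point; this follows the two-dimensional arguments of Sjöstrand \cite{SJ} and Melin--Sjöstrand \cite{MR2003421} almost verbatim in the present one-dimensional setting.
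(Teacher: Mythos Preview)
Your sketch is sound and follows the standard route---oscillatory-integral composition of $T_{\phi_0} P^{\epsilon}_{\hbar} T_{\phi_0}^*$, contour deformation to $\Gamma(x)$ via the holomorphy granted by (A), Egorov-type transport of the symbol by $\kappa_{\phi_0}$, then a weighted Schur test using the slow-variation of the order function. This is exactly the argument carried out in the references the paper cites.

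The paper itself, however, does not prove this proposition at all: it is stated as a known result with a pointer to \cite[p.~139--142]{SJ} and \cite{MR2003421}, and no argument is given in the text. So there is nothing to compare beyond noting that your outline is a faithful summary of the one-dimensional specialization of those references. If you intend to include a proof in your own write-up, what you have is adequate as a roadmap; the only place where more care is warranted is the uniformity in $\hbar$ of the Schur estimate, which requires tracking the $\hbar$-dependence of the Gaussian width along $\Gamma(x)$ against the $\hbar^{-1}$ prefactor---but this is routine and handled in the cited sources.
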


\noindent Since $ \tilde{p}^{ \epsilon}_{ \hbar}$ is a holomorphic function and is bounded by the order function $\tilde{m}$ in a tubular neighbourhood of $ \Lambda_{ \Phi_0}$, we can perform a contour deformation of $ \Gamma(x)$ and consider other weight functions as follows.

\begin{prop} \label{prop_H_Phi}
With the notation of Proposition \ref{prop_H_phi_0}, let $ \Phi \in \mathcal{C}^{1,1}( \mathbb{C}, \mathbb{R})$ (the space of $ \mathcal{C}^1$ functions with Lipschitz gradient) be a function close to $ \Phi_0$ in the following sense:
\begin{enumerate}
\item[1.] $ \Phi - \Phi_0$ is bounded;
\item[2.] there exists a constant $C > 0$ such that: $ \sup \left| \dfrac{\partial \Phi}{\partial x} - \dfrac{\partial \Phi_0}{\partial x} \right| < \dfrac{1}{2C}$, where $C$ is large enough, so that:
$$ \Gamma_C(x) = \left\lbrace (y, \eta) \in \mathbb{C}^2; \eta = \dfrac{2}{i} \dfrac{\partial \Phi_0}{\partial x} \left( \dfrac{x+y}{2} \right) + \dfrac{i}{C} \dfrac{\overline{x-y}}{\langle x-y \rangle} \right\rbrace \subset \Lambda_{ \Phi} .$$
\end{enumerate}
Then $ \tilde{P}^{ \epsilon}_{ \hbar}: H( \mathbb{C}, \Phi, \tilde{m}) \longrightarrow H( \mathbb{C}, \Phi)$ is uniformly bounded in $\hbar$ and $ \epsilon$ (for $ \hbar < 1$ and $ \epsilon < \epsilon_0$ where $ \epsilon_0 $ is a fixed positive real number).
\end{prop}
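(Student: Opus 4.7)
The plan is to obtain Proposition \ref{prop_H_Phi} from Proposition \ref{prop_H_phi_0} by a contour deformation argument combined with a Schur-type kernel estimate. The essential point is that the symbol $\tilde{p}^{\epsilon}_{\hbar} = p^{\epsilon}_{\hbar} \circ \kappa_{\phi_0}^{-1}$ is holomorphic on a tubular neighbourhood of $\Lambda_{\Phi_0}$ and controlled there by $\tilde{m}$, so we may freely replace the contour $\Gamma(x)$ of Proposition \ref{prop_H_phi_0} by any nearby contour lying in that holomorphy region.

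First I would check that the new contour $\Gamma_C(x)$ is admissible. The closeness hypothesis $\sup|\partial \Phi/\partial x - \partial \Phi_0/\partial x|<1/(2C)$, combined with the extra term $\frac{i}{C}\frac{\overline{x-y}}{\langle x-y\rangle}$ whose modulus is at most $1/C$, ensures that both $\Gamma(x)$ and $\Gamma_C(x)$, as well as the cylinder joining them, remain inside the tubular neighbourhood of $\Lambda_{\Phi_0}$ on which $\tilde{p}^{\epsilon}_{\hbar}$ is holomorphic and bounded by $\tilde{m}$. Stokes' formula then lets me deform $\Gamma(x)$ into $\Gamma_C(x)$, the boundary contributions at infinity vanishing thanks to the polynomial growth of $\tilde{m}$ against the Gaussian oscillatory factor $e^{(i/\hbar)(x-y)\eta}$ whose phase has strictly negative real part along the deformation for $|x-y|$ large.

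Next I would extract pointwise bounds on the kernel. Along $\Gamma_C(x)$ one computes
\begin{equation*}
\Re\!\left(\frac{i}{\hbar}(x-y)\eta\right) = \frac{2}{\hbar}\Re\!\left((x-y)\,\partial_x\Phi_0\!\left(\tfrac{x+y}{2}\right)\right) - \frac{1}{C\hbar}\frac{|x-y|^2}{\langle x-y\rangle}.
\end{equation*}
For the strictly plurisubharmonic quadratic form $\Phi_0$ the first term equals exactly $(\Phi_0(x)-\Phi_0(y))/\hbar$, and since $\Phi-\Phi_0$ is bounded with Lipschitz gradient close to that of $\Phi_0$, one obtains
\begin{equation*}
\left|e^{(i/\hbar)(x-y)\eta}\right| \leq C_1 \, e^{(\Phi(x)-\Phi(y))/\hbar}\, e^{-|x-y|^2/(C\hbar\langle x-y\rangle)}.
\end{equation*}
Multiplying by the symbol bound $|\tilde{p}^{\epsilon}_{\hbar}|\leq C\,\tilde{m}((x+y)/2)$ and the Jacobian of the contour (uniformly bounded in $x,y$), the kernel $K(x,y)$ of $\tilde{P}^{\epsilon}_{\hbar}$ satisfies
\begin{equation*}
\left|K(x,y)\,e^{-\Phi(x)/\hbar}\,e^{\Phi(y)/\hbar}\right| \leq \frac{C_2}{\hbar}\,\tilde{m}\!\left(\tfrac{x+y}{2}\right)\,e^{-|x-y|^2/(C\hbar\langle x-y\rangle)}.
\end{equation*}

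Finally I would apply Schur's test to the renormalised kernel between $L^2(\mathbb{C}, \tilde{m}^2 L(dz))$ and $L^2(\mathbb{C}, L(dz))$. Using the order-function property $\tilde{m}((x+y)/2)/\tilde{m}(y)\leq C_0\langle x-y\rangle^{N_0}$, both partial integrals reduce to estimates of $\frac{1}{\hbar}\int \langle t\rangle^{N_0}\,e^{-|t|^2/(C\hbar\langle t\rangle)}\,L(dt)$, which is uniformly bounded in $\hbar\in(0,1]$ and $\epsilon\in[0,\epsilon_0)$ (the exponent behaves like $|t|^2/\hbar$ for $|t|$ small and like $|t|/\hbar$ for $|t|$ large, both integrable against polynomial weights after the factor $1/\hbar$). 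Holomorphy of the image follows from the fact that the kernel is, after evaluating on a holomorphic $u\in H(\mathbb{C},\Phi,\tilde{m})$, obtained from the anti-holomorphic reproduction formula inherent in the Bargmann setting.

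The main obstacle is the justification of the contour deformation at infinity: since $\Phi$ is only $\mathcal{C}^{1,1}$ and the contour is non-compact, one has to argue carefully that no tail contribution appears when applying Stokes, exploiting both the holomorphy of $\tilde{p}^{\epsilon}_{\hbar}$ on a full tubular neighbourhood of $\Lambda_{\Phi_0}$ and the pointwise displacement bound $1/C$ built into the definition of $\Gamma_C(x)$.
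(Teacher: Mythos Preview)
The paper does not actually prove this proposition: it is stated as a known result, with the references \cite[p.~139--142]{SJ} and \cite{MR2003421} given just before Proposition~\ref{prop_H_phi_0}, and with only the one-sentence remark ``Since $\tilde{p}^{\epsilon}_{\hbar}$ is a holomorphic function and is bounded by the order function $\tilde{m}$ in a tubular neighbourhood of $\Lambda_{\Phi_0}$, we can perform a contour deformation of $\Gamma(x)$ and consider other weight functions'' preceding the statement. Your outline --- contour deformation exploiting the holomorphy and symbol bound, the quadratic identity $2\Re\bigl((x-y)\,\partial_x\Phi_0(\tfrac{x+y}{2})\bigr)=\Phi_0(x)-\Phi_0(y)$, absorption of the $\Phi_0\to\Phi$ discrepancy via the Lipschitz closeness of the gradients, and a Schur test against the order function --- is exactly the standard argument carried out in those references, so your proposal is correct and aligned with what the paper invokes.

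One bookkeeping point worth tightening: in passing from the $\Phi_0$-weighted exponent to the $\Phi$-weighted one you use $|(\Phi-\Phi_0)(x)-(\Phi-\Phi_0)(y)|\le 2\sup|\partial_x(\Phi-\Phi_0)|\,|x-y|$, and this linear-in-$|x-y|$ error must be dominated by the damping $|x-y|^2/(C\langle x-y\rangle)$. For large $|x-y|$ both behave like $|x-y|$, so the hypothesis $\sup|\partial_x\Phi-\partial_x\Phi_0|<1/(2C)$ is exactly what makes the constants close (one keeps half of the decay); for small $|x-y|$ the damping is quadratic while the error is linear, but there the $\mathcal{C}^{1,1}$ hypothesis on $\Phi$ gives the sharper estimate $|(\Phi-\Phi_0)(x)-(\Phi-\Phi_0)(y)-2\Re((x-y)\partial_x(\Phi-\Phi_0)(\tfrac{x+y}{2}))|=O(|x-y|^2)$, and the middle term is recombined with the quadratic identity. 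This is routine and is handled in the cited sources, but your sentence ``since $\Phi-\Phi_0$ is bounded with Lipschitz gradient close to that of $\Phi_0$'' compresses both regimes into one; spelling this out would remove the only place a reader might pause.
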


We now introduce the following strictly plurisubharmonic quadratic form :
\begin{align*}
\Phi_1: \mathbb{C} & \longrightarrow \mathbb{R} \\
x & \longmapsto \dfrac{1}{2} | \Im(x)|^2
\end{align*}
This quadratic form is associated in the sense of Proposition \ref{prop_Tphi} with the holomorphic quadratic function $ \phi_1$ defined by, for all $z, x \in \mathbb{C}$:
$$ \phi_1(z,x) = \dfrac{i}{2} (z-x)^2 .$$
The canonical transformation $ \kappa_{ \phi_1}$ is given by:
\begin{align*}
\kappa_{ \phi_1}: \mathbb{C} \times \mathbb{C} & \longrightarrow \mathbb{C} \times \mathbb{C}, \\
(x, \xi) & \longmapsto (x-i \xi,\xi).
\end{align*}
Notice that, for $(x, \xi) \in \mathbb{C}^2$, we have:
$$ \kappa_{ \phi_1}(x+2 \pi, \xi) = \kappa_{ \phi_1}(x, \xi) + (2 \pi, 0).$$
Therefore, there exists a map $ \overline{\kappa}_{ \phi_1}: ( \mathbb{S}^1 + i \mathbb{R}) \times \mathbb{C} \longrightarrow ( \mathbb{S}^1 + i \mathbb{R}) \times \mathbb{C}$ such that $ \pi \circ \kappa_{ \phi_1} = \overline{\kappa}_{ \phi_1} \circ \pi$ where $ \pi: ( \mathbb{R} + i \mathbb{R}) \times \mathbb{C} \longrightarrow ( \mathbb{S}^1 + i \mathbb{R}) \times \mathbb{C}$ is the projection.\\

\noindent We consider the following transformations:
\begin{enumerate}
\item[1.] $ \overline{\kappa}_{ \phi_1} : (T^* \mathbb{S}^1)_{ \mathbb{C}} \longrightarrow (T^* \mathbb{S}^1)_{ \mathbb{C}}$ which sends $ \mathbb{S}^1 \times \mathbb{R}$ to $ \Lambda_{ \Phi_1}$ where:
$$ \Lambda_{ \Phi_1} = \left\lbrace ( x, \xi) \in (T^* \mathbb{S}^1)_{ \mathbb{C}}, \xi = \dfrac{2}{i} \dfrac{\partial \Phi_1}{\partial x}(x) = - \Im(x) \right\rbrace; $$
\item[2.] $ \tilde{\kappa}^{-1}$ defined by:
$$ \tilde{\kappa}^{-1} = \overline{\kappa}_{ \phi_1}^{-1} \circ \kappa^{-1} \circ \overline{\kappa}_{ \phi_1}: (T^* \mathbb{S}^1)_{ \mathbb{C}} \longrightarrow (T^* \mathbb{S}^1)_{ \mathbb{C}},$$
which does not preserve $ \Lambda_{ \Phi_1}$ (because $ \kappa$ is not a real transformation) but sends it to another $IR$-manifold denoted by $ \Lambda_{ \Phi_2}$, where $\Phi_2$ is a smooth function close to $ \Phi_1$.
\end{enumerate} 
To summarize, we consider the following commutative diagram on the phase spaces:
\begin{displaymath}
    \xymatrix{
        \underset{\color{red}{( \theta, I)}}{\mathbb{S}^1 \times \mathbb{R} \subset (T^* \mathbb{S}^1)_{ \mathbb{C}}} \ar[r]^{ \kappa} \ar[d]_{ \overline{\kappa}_{ \phi_1}} & \underset{\color{red}{( \tilde{\theta}, \tilde{I})}}{(T^* \mathbb{S}^1)_{ \mathbb{C}} \supset \mathbb{S}^1} \times \mathbb{R} \ar[d]^{ \overline{\kappa}_{ \phi_1}}\\
        \overset{\Lambda_{ \Phi_1} \subset (T^* \mathbb{S}^1)_{ \mathbb{C}}}{\underset{ \color{red}{(y, \eta)}}{\Lambda_{ \Phi_2} \subset (T^* \mathbb{S}^1)_{ \mathbb{C}}}}   & \underset{\color{red}{(x, \xi)}}{(T^* \mathbb{S}^1)_{ \mathbb{C}} \supset \Lambda_{ \Phi_1}}\ar[l]_{\tilde{\kappa}^{-1}} }
\end{displaymath}

We want to quantize the previous transformations.
First, we show how to construct a unitary operator associated with the transformation $ \tilde{\kappa}$, following \cite{MR2003421} (note that their case is the two dimensional one). For the sake of completeness, we recall the one dimension theory. We consider:
\begin{align*}
\tilde{\kappa}^{-1} = \overline{\kappa}_{ \phi_1} \circ \kappa^{-1} \circ \overline{\kappa}_{ \phi_1}^{-1}: (T^* \mathbb{S}^1)_{ \mathbb{C}} & \longrightarrow (T^* \mathbb{S}^1)_{ \mathbb{C}}, \\
(x, \xi) & \longmapsto (y, \eta).
\end{align*}
\noindent First, we can show that there exists a smooth function $ \Phi_2$ such that the transformation $ \tilde{\kappa}^{-1}$ sends the $IR$-manifold $ \Lambda_{ \Phi_1}$ to $ \Lambda_{ \Phi_2}$.

\begin{prop} \label{prop_tilde(kappa)_envoie_Phi2_sur_Phi1}
There exists a smooth function $ \Phi_2$ such that:
\begin{enumerate}
\item[1.] $ \Phi_2$ is uniformly strictly plurisubharmonic;
\item[2.] $ \Phi_2$ is close to $\Phi_1$ in the sense of Proposition \ref{prop_H_Phi};
\item[3.] $ \tilde{\kappa}^{-1}( \Lambda_{ \Phi_1}) = \Lambda_{ \Phi_2} = \left\lbrace (y, \eta) \in (T^* \mathbb{S}^1)_{ \mathbb{C}}; \eta = \dfrac{2}{i} \dfrac{\partial \Phi_2}{\partial y}(y) \right\rbrace$.
\end{enumerate}
\end{prop}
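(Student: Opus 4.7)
The plan is to realize $\tilde{\kappa}^{-1}(\Lambda_{\Phi_1})$ as the graph of a $1$-form over $\mathbb{S}^1 + i\mathbb{R}$ and read off $\Phi_2$ as a real potential, exploiting that $\tilde{\kappa}^{-1}$ is a holomorphic canonical transformation which is an $\mathcal{O}(\epsilon)$-perturbation of the identity. By the remark following Proposition \ref{prop_kappa}, at $\epsilon = 0$ the map $\kappa$ is the identity, so $\tilde{\kappa}^{-1} = \id$ and one may take $\Phi_2 = \Phi_1$. Smoothness in $\epsilon$, together with the fact that every holomorphic canonical transformation preserves the complex symplectic form $\sigma_{\mathbb{C}} = d\eta \wedge dy$, and hence both $\Re \sigma_{\mathbb{C}}$ and $\Im \sigma_{\mathbb{C}}$, ensures that $\tilde{\kappa}^{-1}(\Lambda_{\Phi_1})$ remains an $I$-Lagrangian, $R$-symplectic manifold close to $\Lambda_{\Phi_1}$.

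By this closeness, the projection $(y,\eta) \mapsto y$ restricted to $\tilde{\kappa}^{-1}(\Lambda_{\Phi_1})$ is a diffeomorphism, and the implicit function theorem yields $\tilde{\kappa}^{-1}(\Lambda_{\Phi_1}) = \{(y, G^{\epsilon}(y))\}$ for a smooth, $2\pi$-periodic function $G^{\epsilon}$ which is $C^{\infty}$-close to $G^{0}(y) = -\Im(y)$. A direct computation shows that the $I$-Lagrangian condition $\Im(dG^{\epsilon} \wedge dy) = 0$ reduces to $\Re(\partial_{\bar{y}} G^{\epsilon}) = 0$, which is precisely the integrability condition for the real $1$-form $\alpha^{\epsilon} := -\Im(G^{\epsilon}\, dy)$ to be closed. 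Integrating locally produces an $\mathbb{R}$-valued $\Phi_2$ with $G^{\epsilon} = \frac{2}{i}\partial_{y}\Phi_2$, equivalently $\tilde{\kappa}^{-1}(\Lambda_{\Phi_1}) = \Lambda_{\Phi_2}$.

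Strict plurisubharmonicity then follows from the identity $\Re \sigma_{\mathbb{C}}|_{\Lambda_{\Phi_2}} = 4\,\partial_{\bar{y}}\partial_{y}\Phi_2\, da \wedge db$ (writing $y = a + ib$), combined with $R$-symplecticity and continuity from the positive value of $\partial_{\bar{y}}\partial_{y}\Phi_1$ at $\epsilon = 0$; this forces $\partial_{\bar{y}}\partial_{y}\Phi_2 > 0$ uniformly for small $\epsilon$. Closeness of $\Phi_2$ to $\Phi_1$ in the sense of Proposition \ref{prop_H_Phi} follows immediately from the $C^{\infty}$-smallness of $G^{\epsilon} - G^{0}$. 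The main obstacle is the global integrability, i.e.\ showing that $\Phi_2$ is single-valued on $\mathbb{S}^1 + i\mathbb{R}$ and not merely on the universal cover. This reduces to the vanishing of the period of $\alpha^{\epsilon}$ around the generator of $\pi_1(\mathbb{S}^1)$; one handles this by pulling back through $\tilde{\kappa}$: the period coincides with that of $-\Im(\eta\, dy)$ on a loop in $\Lambda_{\Phi_1}$, where $-\Im(\eta\, dy) = d((\Im y)^2/2)$ is exact and has zero period, up to a correction $\int_{\tilde{\gamma}}\Im(\tilde{\kappa}^*(\eta\, dy) - \eta\, dy)$ from the difference of tautological $1$-forms, which one verifies vanishes by a cohomological argument using that $\tilde{\kappa}^{-1}$ is globally defined on $(T^*\mathbb{S}^1)_{\mathbb{C}}$.
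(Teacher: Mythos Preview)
Your argument follows the same route as the paper's proof, which is a terse four-sentence appeal to the fact that $\tilde{\kappa}^{-1}$ is a holomorphic symplectic map close to the identity, so the image of $\Lambda_{\Phi_1}$ is an $IR$-manifold that can be written as $\Lambda_{\Phi_2}$, with closeness and plurisubharmonicity following by perturbation. You unpack each of these steps --- the graph representation, the $I$-Lagrangian condition as an integrability condition, and the $R$-symplectic condition as positivity of $\partial_{\bar y}\partial_y\Phi_2$ --- which is all correct and makes explicit what the paper leaves implicit.

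The one place where you go beyond the paper is the global single-valuedness of $\Phi_2$ on $\mathbb{S}^1+i\mathbb{R}$, which the paper does not address at all. Your instinct is right that this needs checking, and your reduction to the vanishing of $\Im\!\int_{\tilde\gamma}\bigl(\tilde{\kappa}^*(\eta\,dy)-\eta\,dy\bigr)$ is the correct computation. However, the final justification --- ``a cohomological argument using that $\tilde{\kappa}^{-1}$ is globally defined'' --- is not quite sufficient: $(T^*\mathbb{S}^1)_{\mathbb{C}}$ has nontrivial $H^1$, so a globally defined symplectomorphism close to the identity need not have $\tilde{\kappa}^*(\eta\,dy)-\eta\,dy$ exact. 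What makes it work here is the explicit generating-function construction of $\kappa$ in Proposition~\ref{prop_kappa}: writing $\tilde h(\theta,\tilde I)=h(\theta,E(\tilde I))+\hat\theta\,\tilde I$ with $h$ single-valued, one checks that $\tilde h-\tilde\theta\,\tilde I$ is $2\pi$-periodic in $\theta$, so $\kappa^*(\tilde I\,d\tilde\theta)-I\,d\theta=d(\tilde h-\tilde\theta\,\tilde I)$ is genuinely exact. Combined with the elementary exactness of $\overline{\kappa}_{\phi_1}^*(\eta\,dy)-\xi\,dx=-\tfrac{i}{2}\,d(\xi^2)$, this gives the vanishing period you need.
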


\begin{proof}
Since $ \tilde{\kappa}^{-1}$ is the composition of three holomorphic symplectic transformations, then $ \tilde{\kappa}^{-1}( \Lambda_{ \Phi_1})$ is an $IR$-manifold. Thus, by using the fact that $ \kappa$ is close to the identity map when $ \epsilon$ is small, we can show that $ \tilde{\kappa}^{-1}( \Lambda_{ \Phi_1})$ can be written as $ \Lambda_{ \Phi_2}$ with $ \Phi_2$ some smooth function. Besides, since $ \epsilon$ is small, $ \tilde{\kappa}^{-1}$ is close to the identity, therefore $ \Lambda_{ \Phi_2}$ is close to $ \Lambda_{ \Phi_1}$, so $ \partial_x \Phi_2(x) $ is close to $ \partial_x \Phi_1(x) $. Besides, since $ \Phi_1$ is a uniformly strictly plurisubharmonic function, $ \tilde{\kappa}$ is holomorphic and $ \partial_x \Phi_2(x)$ is close to $ \partial_x \Phi_1(x)$, then $ \Phi_2$ is also a uniformly plurisubharmonic function.
\end{proof}

\noindent Let $ graph( \tilde{\kappa}) = \lbrace ( x, \xi; y,  \eta) \in \Lambda_{ \Phi_1} \times \Lambda_{ \Phi_2}; (x, \xi) = \tilde{\kappa}(y, \eta) \rbrace$. Following \cite{MR2003421}, we can construct a function $\psi(x,y)$, defined in a neighbourhood of each point of $ \pi_{x, y}( graph( \tilde{\kappa}))$ (the projection of the set $graph( \tilde{\kappa})$), such that:
\begin{enumerate}
\item[1.] $ \partial_{ \overline{x}} \psi(x,y)$ and $ \partial_y \psi(x,y)$ vanish to infinite order on  $ \pi_{(x,y)}( graph( \tilde{\kappa}))$;
\item[2.] $ \partial_x \psi(x,y) = \dfrac{2}{i} \dfrac{\partial \Phi_1}{\partial x}(x)$ and $ \partial_{ \overline{y}} \psi(x,y) = \dfrac{2}{i} \dfrac{\partial \Phi_2}{\partial \overline{y}}(y)$,
$ \forall (x,y) \in \pi_{(x,y)}( graph( \tilde{\kappa}))$;
\item[3.] $ \Phi_1(x) + \Phi_2(y) + \Im (\psi(x,y)) \sim dist((x,y), \pi_{(x,y)}( graph( \tilde{\kappa}))^2$.
\end{enumerate}
\begin{remark}
We can consider the set $ \pi_{x, y}( graph( \tilde{\kappa}))$ because $ \Lambda_{ \Phi_1}$ and $ \Lambda_{ \Phi_2}$ are parametrized by $x$ and $y$ respectively, so $ \Lambda_{ \Phi_1} \times \Lambda_{ \Phi_2}$ too. Therefore $ \pi_{x, y}( graph( \tilde{\kappa}))$ is a regular submanifold of $ \Lambda_{ \Phi_1} \times \Lambda_{ \Phi_2}$.
\end{remark}

\noindent According to Conditions 1. and 2. we have:
$$ d \psi = \dfrac{2}{i} \dfrac{\partial \Phi_1}{\partial x}(x) dx + \dfrac{2}{i} \dfrac{\partial \Phi_2}{\partial \overline{y}}(y) d \overline{y} \quad \text{on $\pi_{(x,y)}( graph( \tilde{\kappa}))$}.$$
If we restrict $ \psi$ to $ \pi_{(x,y)}( graph( \tilde{\kappa}))$ and identify it with a function on $ graph( \tilde{\kappa})$, we obtain:
$$ d( \left. \psi \right|_{ graph( \tilde{\kappa})} )= \xi dx - \overline{ \eta} d \overline{y} \quad \text{ for $(x, \xi; y, \eta) \in graph( \tilde{\kappa})$}.$$
We want to study the analytic continuation of the function $ \psi$ along a loop $ \gamma$ in $ graph( \tilde{\kappa})$. \\
First, notice that:
\begin{align*}
\left. \Im ( \xi dx ) \right|_{ \Lambda_{ \Phi_1}} & = \Im \left( \dfrac{2}{i} \dfrac{\partial \Phi_1}{\partial x} dx \right), \\
& = \dfrac{1}{2i} \left( \dfrac{2}{i} \dfrac{\partial \Phi_1}{\partial x} dx - \overline{\dfrac{2}{i} \dfrac{\partial \Phi_1}{\partial x} dx } \right), \\
& = - \left( \dfrac{\partial \Phi_1}{ \partial x} dx + \dfrac{\partial \Phi_1}{\partial \overline{x}}  d \overline{x}\right), \\
& = - d \Phi_1.
\end{align*}
So the form $\left. \Im ( \xi dx ) \right|_{ \Lambda_{ \Phi_1}}$ is exact. Similarly $ \left. \Im ( \overline{\eta} d \overline{y} ) \right|_{ \Lambda_{ \Phi_2}}$ is exact. \\
Let $ \hat{\gamma} = \lbrace ( \tilde{\kappa}( \rho), \rho); \rho \in \gamma \rbrace$ where $\gamma$ is any loop in the domain of $ \tilde{\kappa}$ restricted to $ \Lambda_{ \Phi_2}$. We have:
\begin{align*}
\int_{ \hat{\gamma}} d \psi & = \int_{( \tilde{ \kappa} \circ \gamma, \gamma)} \left( \xi dx - \overline{ \eta} d \overline{y} \right), \\
& = \int_{ \tilde{\kappa} \circ \gamma} \xi dx - \int_{ \gamma} \overline{\eta} d \overline{y}, \\
& = \int_{ \tilde{\kappa} \circ \gamma} \left( \Re( \xi dx) + i \Im( \xi dx) \right) - \int_{ \gamma}  \left( \Re( \overline{\eta} d \overline{y}) + i \Im( \overline{\eta} d \overline{y}) \right),
\\
& = \int_{ \tilde{\kappa} \circ \gamma} \Re( \xi dx) - \int_{ \gamma} \Re( \eta dy), \\
& := - J(\gamma).
\end{align*}
Therefore along a loop, $ \psi$ changes by a real constant since it is the difference of two real actions. We call this difference the Floquet index, this number depends only on $ \tilde{\kappa}$. \\

\noindent \texttt{Notation:}
\begin{itemize}
\item $L^2_J( \mathbb{S}^1)$ is the space of Floquet periodic measurable functions $f$ such that:
$$ \dfrac{1}{2 \pi} \int_0^{2 \pi} |f(x)|^2 dx < + \infty ,$$
such a function $f$ satisfies the following Floquet periodicity condition:
$$ f(x+2 \pi) = e^{-(i/ \hbar) J} f(x).$$
\item $L^2_J( \mathbb{S}^1 + i \mathbb{R}, \Phi)$ is the space of multi-valued Floquet periodic functions $f$ such that:
$$ \int_0^{ 2 \pi} \int_{ \mathbb{R}} |f(z)|^2 e^{-2 \Phi(z)/ \hbar} L(dz) < + \infty ,$$
\item $H_J( \mathbb{S}^1 + i \mathbb{R}, \Phi)$ is the space of holomorphic functions in $L^2_J( \mathbb{S}^1 + i \mathbb{R}, \Phi)$.
\end{itemize}

\noindent We can now quantize the transformation $ \tilde{\kappa}$.

\begin{prop}[\cite{MR2003421}] \label{prop_quantif_A}
Let $A$ be the operator defined by:
$$ A u( x) = \dfrac{1}{\hbar} \int_{ \mathbb{C}} e^{(i/\hbar)\psi( x,y)} a( x,y) \chi(x, y) u( y) e^{-(2/\hbar) \Phi_2( y)} L( d y),$$
where $a( x, y)$ is a symbol satisfying:
\begin{enumerate}
\item[1.] $a(x,y) \sim \sum a_j( x,y) \hbar^j$ in $ \mathcal{C}^{ \infty}( \Vois( \pi_{( x,y)}( graph( \tilde{\kappa}))))$;
\item[2.] $a_j \in \mathcal{C}^{ \infty}$;
\item[3.] $ \partial_{ \overline{x}} a_j = \mathcal{O}( (dist(( x,y), \pi_{(x,y)} graph( \tilde{\kappa})))^{ \infty} + \hbar^{ \infty})$;
\item[4.] $ \partial_{ y} a_j = \mathcal{O}( (dist( (x,y), \pi_{( x,y)} graph( \tilde{\kappa})))^{ \infty} + \hbar^{ \infty})$;
\item[5.] $a$ elliptic, \textit{i.e.} $a_0$ does not vanish;
\end{enumerate} 
and where $ \chi$ is a cut-off equal to $1$ in a neighbourhood of $ \pi_{( x,y)}( graph( \tilde{\kappa}))$. \\
Let $U \subset \Lambda_{ \Phi_2}$ and let $V \subset \Lambda_{ \Phi_1}$ such that $ \tilde{\kappa}(U) = V$. Then:
\begin{enumerate}
\item[1.] $A = L^2( \pi(U), e^{-2 \Phi_2/\hbar} L( d y)) \longrightarrow L^2_{\Flo}( \pi(V), e^{-2 \Phi_1/\hbar} L( d x))$ is a bounded operator;
\item[2.] $ \| (\overline{\partial} \circ A) u \|_{L^2_{\Flo}} \leq \mathcal{O}( \hbar^{ \infty}) \| u \|_{L^2}$.
\end{enumerate}
\end{prop}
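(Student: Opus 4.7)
The plan is to reduce both claims to Gaussian-decay estimates for the kernel of $A$, using the fundamental property $\Phi_1(x)+\Phi_2(y)+\Im(\psi(x,y)) \sim \dist((x,y),\pi_{(x,y)}(\mathrm{graph}(\tilde{\kappa})))^2$ that was built into the construction of $\psi$. After conjugating by the weights, the operator $A$ has integral kernel
\[
K(x,y) \;=\; \hbar^{-1}\, e^{(i/\hbar)\psi(x,y)}\, e^{-\Phi_1(x)/\hbar}\, e^{-\Phi_2(y)/\hbar}\, a(x,y)\,\chi(x,y),
\]
whose modulus is at most $\hbar^{-1}\,|a\chi|\,e^{-c\,\dist((x,y),\,\pi_{(x,y)}\mathrm{graph}(\tilde{\kappa}))^2/\hbar}$. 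So the argument is driven by a transverse Gaussian of width $\hbar^{1/2}$ concentrated on a smooth submanifold.

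For boundedness, I would apply Schur's test: integrating $|K(x,y)|$ in $y$ along directions transverse to $\pi_{(x,y)}\mathrm{graph}(\tilde{\kappa})$ gives a factor $\hbar^{1/2}$ from each of the two real transverse directions, producing $\hbar$ that cancels the $\hbar^{-1}$ prefactor; the remaining integration along the tangential direction is over a compact set of the variable $y$ and is uniformly bounded. The symmetric estimate with the roles of $x$ and $y$ exchanged then yields a uniform-in-$\hbar$ (and $\epsilon$) operator norm. The Floquet periodicity of $Au$ — that is, the fact that $A$ indeed takes values in $L^2_{\Flo}$ — follows from the identity $\int_{\hat{\gamma}} d\psi = -J(\gamma)$ derived just above the statement: analytic continuation of $\psi$ along a loop that translates $x$ by $2\pi$ shifts $\psi$ by $-J$, which turns into the Floquet factor $e^{-iJ/\hbar}$.

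For the $\bar{\partial}$ estimate, I would differentiate under the integral:
\[
\bar{\partial}_x(Au)(x) \;=\; \hbar^{-1}\!\int e^{(i/\hbar)\psi(x,y)}\left[\tfrac{i}{\hbar}\,\bar{\partial}_x\psi\cdot a\,\chi \;+\; \bar{\partial}_x a\cdot\chi \;+\; a\,\bar{\partial}_x\chi\right] u(y)\, e^{-2\Phi_2(y)/\hbar}\,L(dy).
\]
On the support of $\chi$, hypothesis 1 on $\psi$ and hypothesis 3 on $a$ give $\bar{\partial}_x\psi = \mathcal{O}(\dist^{N})$ and $\bar{\partial}_x a = \mathcal{O}(\dist^N + \hbar^N)$ for every $N$, where $\dist$ denotes distance to $\pi_{(x,y)}\mathrm{graph}(\tilde{\kappa})$. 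Combining each factor of $\dist^N$ with the Gaussian of width $\hbar^{1/2}$ provided by the decay estimate produces a gain $\hbar^{N/2}$, and taking $N$ arbitrarily large yields $\mathcal{O}(\hbar^\infty)$ in operator norm after a second Schur argument. The cut-off term $a\,\bar{\partial}_x\chi$ is supported outside a fixed neighborhood of $\pi_{(x,y)}\mathrm{graph}(\tilde{\kappa})$, where $\dist$ is bounded below by a positive constant, so the Gaussian weight by itself already gives $e^{-c/\hbar} = \mathcal{O}(\hbar^\infty)$ pointwise, hence also in operator norm.

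The main technical obstacle is controlling the Schur integrals uniformly near the boundary of the cut-off, and more subtly, handling the multi-valued character of $\psi$ so that the Floquet quasi-periodicity propagates correctly to $Au$ and to $\bar{\partial}(Au)$; this requires that all asymptotic expansions and almost-holomorphic extensions are carried out consistently across fundamental domains. Once this bookkeeping is in place, the stationary-phase/Gaussian-decay machinery of the FBI framework applies routinely and yields both assertions.
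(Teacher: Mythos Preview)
The paper does not supply its own proof of this proposition: it is quoted directly from \cite{MR2003421} (Melin--Sj{\"o}strand), as the bracketed citation in the statement indicates, and no proof environment follows it in the text. Your sketch is essentially the argument one finds in that reference: conjugate by the weights, use the fundamental estimate $\Phi_1(x)+\Phi_2(y)+\Im\psi(x,y)\sim\dist^2$ to obtain a transverse Gaussian of width $\hbar^{1/2}$, apply Schur's test to cancel the $\hbar^{-1}$ prefactor, and for the $\bar\partial$ bound differentiate under the integral and trade each $\dist^N$ against the Gaussian for $\hbar^{N/2}$. Your identification of the Floquet multiplier via the action defect $\int_{\hat\gamma}d\psi=-J(\gamma)$ is also exactly how the paper (following \cite{MR2003421}) explains why $A$ lands in $L^2_{\Flo}$. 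So there is nothing to compare beyond saying that your approach coincides with the cited one.
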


\begin{remark}
Let $A^*$ be the adjoint of $A$. Then, $A^*$ is associated with the transformation $ \tilde{\kappa}^{-1}$ and we can choose the symbol $a$ such that, up to $ \mathcal{O}(\hbar^{ \infty})$ (with the notations of Proposition \ref{prop_quantif_A}):
\begin{itemize}
\item $A^* A$ is the orthogonal projector $L^2( \pi(U), e^{-2 \Phi_2/\hbar} L( d y)) \longrightarrow H( \pi(U), \Phi_2)$;
\item $AA^*$ is the orthogonal projector $L^2_{\Flo}( \pi(V), e^{-2 \Phi_1/\hbar} L( d x)) \longrightarrow H_{\Flo}( \pi(V), \Phi_1)$.
\end{itemize}
\end{remark}

\noindent Therefore, we obtained a unitary operator $A$ microlocally defined on the $L^2( \Phi)$-spaces associated with the transformation $ \tilde{\kappa}$, which sends the set of holomorphic functions on itself up to $ \mathcal{O}( \hbar^{ \infty})$. We also have an Egorov theorem in this case, as follows.

\begin{prop}[\cite{MR2003421}]
With the notation of Proposition \ref{prop_quantif_A}, there exists an operator $ \hat{P}^{ \epsilon}_{ \hbar}$ depending smoothly on $ \epsilon$ defined by:
$$ \hat{P}^{ \epsilon}_{ \hbar} (x, \hbar D_x) u(x) = \dfrac{1}{2 \pi \hbar} \int \! \! \! \int_{ \Gamma(x)} e^{(i/\hbar)(x-y) \eta} ( \chi \hat{p}^{ \epsilon}_{ \hbar}) \left( \dfrac{x+y}{2}, \eta \right) u(y) dy d \eta,$$
where $ \Gamma(x) = \left\lbrace (y, \eta) \in (T^* \mathbb{S}^1)_{ \mathbb{C}}; \eta = \dfrac{2}{i} \dfrac{\partial \Phi_1}{\partial x} \left( \dfrac{x+y}{2} \right) \right\rbrace$ and where $ \chi$ is a suitable cut-off, such that:
\begin{enumerate}
\item[1.] the principal symbol $ \hat{p}^{ \epsilon}$ of $ \hat{P}^{ \epsilon}_{ \hbar}$ satisfies the equation $ \hat{p}^{ \epsilon}= \tilde{p}^{ \epsilon} \circ \tilde{ \kappa}^{-1}$;
\item[2.] $ \hat{P}^{ \epsilon}_{ \hbar} A = A \tilde{P}^{ \epsilon}_{ \hbar}$ and  $A^* \hat{P}^{ \epsilon}_{ \hbar} = \tilde{P}^{ \epsilon}_{ \hbar} A^*$ up to $ \mathcal{O}(\hbar^{ \infty})$ in the sense that:
\begin{align*}
\| (\hat{P}^{ \epsilon}_{ \hbar} A - A \tilde{P}^{ \epsilon}_{\hbar}) u \|_{L^2_{\Flo}( \tilde{V}, \Phi_1)} & \leq \mathcal{O}(\hbar^{ \infty}) \| u \|_{H(U, \Phi)}, \\
\| ( A^* \hat{P}^{ \epsilon}_{ \hbar} - \tilde{P}^{ \epsilon}_{ \hbar} A^*)u \|_{L^2( \tilde{U}, \Phi)} & \leq \mathcal{O}( \hbar^{ \infty}) \| u \|_{H_{ \Flo}(V, \Phi_1)},
\end{align*}
where $ \tilde{V}$ is a compact subset of $\pi(V)$ and $ \tilde{U}$ is a compact subset of $ \pi(U)$.
\end{enumerate}
\end{prop}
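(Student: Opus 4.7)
The plan is to construct $\hat{P}^{\epsilon}_{\hbar}$ explicitly as a contour pseudo-differential operator on $\Lambda_{\Phi_1}$, then verify the intertwining relations by applying the complex stationary phase method to the oscillatory integrals representing $\hat{P}^{\epsilon}_{\hbar}A$ and $A\tilde{P}^{\epsilon}_{\hbar}$. First I would define a candidate symbol $\hat{p}^{\epsilon}_{\hbar} \sim \sum_j \hat{p}^{\epsilon}_j \hbar^j$ whose principal part is fixed by $\hat{p}^{\epsilon}_0 = \tilde{p}^{\epsilon} \circ \tilde{\kappa}^{-1}$ on $\Lambda_{\Phi_1}$, extended almost-holomorphically so that $\overline{\partial}\hat{p}^{\epsilon}_{\hbar}$ vanishes to infinite order on $\Lambda_{\Phi_1}$, and $\hat{p}^{\epsilon}_{\hbar}$ is bounded by the order function $\hat{m} = \tilde{m} \circ \tilde{\kappa}^{-1}$. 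With this symbol, the contour integral formula of the statement defines $\hat{P}^{\epsilon}_{\hbar}$ as a uniformly bounded operator $H_{\Flo}(\pi(V),\Phi_1,\hat{m}) \to H_{\Flo}(\pi(V),\Phi_1)$ by Proposition \ref{prop_H_Phi}.

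Next, I would compute $A \tilde{P}^{\epsilon}_{\hbar}$ as an oscillatory integral with phase $\psi(x,y) + (y-y')\eta$ and amplitude $a(x,y)\chi(x,y)\tilde{p}^{\epsilon}_{\hbar}((y+y')/2,\eta)$. Integrating first in $(y',\eta)$ on the Sjöstrand contour adapted to $\Lambda_{\Phi_2}$, the stationary phase critical set is $y' = y$, $\eta = \partial_y\psi(x,y)$, which by the construction of $\psi$ parametrises $\pi_{(x,y)}(\text{graph}(\tilde{\kappa}))$. The leading contribution of complex stationary phase produces $a(x,y)\chi(x,y)\tilde{p}^{\epsilon}((y,\partial_y\psi(x,y)))$, and evaluating on $\text{graph}(\tilde{\kappa})$ this equals $a(x,y)\chi(x,y)\hat{p}^{\epsilon}(x,\partial_x\psi(x,y)/2 \cdot \text{(appropriate factor)})$, which is precisely the amplitude of $\hat{P}^{\epsilon}_{\hbar}A$ at leading order. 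The remainder is $\mathcal{O}(\hbar)$ in operator norm on the weighted spaces.

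The intertwining is then promoted to all orders in $\hbar$ by an iterative construction: at step $j$, the error $\hat{P}^{\epsilon}_{\hbar}A - A\tilde{P}^{\epsilon}_{\hbar}$ modulo $\mathcal{O}(\hbar^{j+1})$ is a pseudo-differential operator whose symbol, when restricted to $\pi_{(x,y)}(\text{graph}(\tilde{\kappa}))$, is a computable expression in the previously fixed $\hat{p}^{\epsilon}_0,\ldots,\hat{p}^{\epsilon}_{j-1}$ and in $\tilde{p}^{\epsilon}_0,\ldots,\tilde{p}^{\epsilon}_j$. Since the amplitude $a_0$ of $A$ is elliptic, the corresponding transport equation can be solved to define $\hat{p}^{\epsilon}_j$, and then extended almost-holomorphically to a neighbourhood of $\Lambda_{\Phi_1}$. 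Borel summation produces a symbol $\hat{p}^{\epsilon}_{\hbar}$ realising $\hat{P}^{\epsilon}_{\hbar}A = A\tilde{P}^{\epsilon}_{\hbar}$ modulo $\mathcal{O}(\hbar^{\infty})$ in the norm stated. The symmetric identity $A^* \hat{P}^{\epsilon}_{\hbar} = \tilde{P}^{\epsilon}_{\hbar} A^*$ follows by taking adjoints, using that $A^*$ quantizes $\tilde{\kappa}^{-1}$ and that $\tilde{P}^{\epsilon}_{\hbar}$ and $\hat{P}^{\epsilon}_{\hbar}$ are both given on weighted $L^2$-spaces.

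The main obstacle will be the bookkeeping of the almost-holomorphic machinery: one must control, in the Floquet-weighted $L^2$-norm, the errors generated by $\overline{\partial}a$, $\overline{\partial}\hat{p}^{\epsilon}_{\hbar}$, and the Cauchy-Riemann defect of $\psi$, and justify the contour deformations required to apply complex stationary phase on a non-compact tubular neighbourhood of $\text{graph}(\tilde{\kappa})$. These estimates are the one-dimensional counterparts of those carried out by Melin and Sjöstrand in \cite{MR2003421}; once the bounded mapping properties of Proposition \ref{prop_H_Phi} and the uniformity in $\epsilon$ of the quantization of $\tilde{\kappa}$ in Proposition \ref{prop_quantif_A} are in hand, the construction goes through verbatim.
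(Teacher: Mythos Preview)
The paper does not give its own proof of this proposition: it is stated with the citation \cite{MR2003421} and no argument is supplied. Your outline---fix the principal symbol by $\hat{p}^{\epsilon}=\tilde{p}^{\epsilon}\circ\tilde{\kappa}^{-1}$, verify the leading-order intertwining by complex stationary phase on the composed oscillatory integral, then solve transport equations order by order using the ellipticity of $a_0$ and Borel-sum---is precisely the standard Egorov construction in the Sj\"ostrand contour-integral calculus, which is what the cited reference carries out (in dimension two). So there is nothing in the present paper to compare against, and your sketch is the correct approach; the only caveat is that the vague step ``$\hat{p}^{\epsilon}(x,\partial_x\psi(x,y)/2\cdot\text{(appropriate factor)})$'' needs to be made precise by using that on $\pi_{(x,y)}(\mathrm{graph}(\tilde{\kappa}))$ one has $(y,\partial_y\psi)=\tilde{\kappa}^{-1}(x,\partial_x\psi)$, which is exactly the relation that turns $\tilde{p}^{\epsilon}(y,\partial_y\psi)$ into $\hat{p}^{\epsilon}(x,\partial_x\psi)$.
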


We previously defined an operator $T_{ \phi_1}: L^2( \mathbb{R}) \longrightarrow H( \mathbb{C}, \Phi_1)$ associated with the canonical transformation $T_{ \phi_1}$. We now want to construct an operator acting on the Floquet spaces associated with the canonical transformation $ \overline{\kappa}_{\phi_1}$, thus we are looking for an operator $B$ such that:
$$ B : H_{\Flo}( \mathbb{S}^1 + i \mathbb{R}, \Phi_1) \longrightarrow L^2_{\Flo}( \mathbb{S}^1) .$$

\noindent \texttt{Notation:} We denote by $k$ the kernel of the FBI transform $T_{ \phi_1} : L^2( \mathbb{R}) \longrightarrow H( \mathbb{C}, \Phi_1)$ associated with $ \phi_1$, \textit{i.e.}:
$$ T_{ \phi_1} u(z) = c_{ \phi_1} \hbar^{-3/4} \int_{ \mathbb{R}} e^{-(1/2 \hbar)(z - x)^2} u(x) dx = \int_{ \mathbb{R}} k(z-x; \hbar) u(x) d x ,$$
with $c_{ \phi_1} \geq 0$ the constant given by Definition \ref{defi_FBI_trnasform}. \\
The complex adjoint $T_{ \phi_1}^* : L^2( \mathbb{C}, \Phi_1) \longrightarrow L^2( \mathbb{R})$ can be rewritten as:
\begin{align*}
 T_{ \phi_1}^* v(x) & = c_{ \phi_1} \hbar^{-3/4} \int_{ \mathbb{C}} e^{-(1/2 \hbar) ( \overline{z} - \overline{x})2} e^{-2 \Phi_1(z)/ \hbar} v(z) L( dz), \\
& = \int_{ \mathbb{C}} \overline{k(z- x; \hbar)} e^{-2 \Phi_1(z)/ \hbar} v(z) L(dz).
\end{align*}
\\

\noindent We identify the functions in $L^2_{\Flo}( \mathbb{S}^1)$ with the Floquet periodic locally square integrable functions on $ \mathbb{R}$ and similarly for the functions in $H_{\Flo}( \mathbb{S}^1 + i \mathbb{R}, \Phi_1)$.

\begin{prop}[\cite{MR2003421}] $ $ 
\begin{enumerate}
\item[1.] $T_{ \phi_1}$ induces an operator $B^*: L^2_{\Flo}( \mathbb{S}^1) \longrightarrow H_{\Flo}( \mathbb{S}^1 + i \mathbb{R}, \Phi_1)$ given by:
$$ B^* u(z) = \int_{ \mathbb{R}} k(z - x; \hbar) u( x) dx = \int_{ \mathcal{E}} \sum_{ \nu \in 2 \pi \mathbb{Z}} k(z -x + \nu;\hbar) e^{(i/\hbar) J \nu} u(x) dx ,$$
where $ \mathcal{E} \subset \mathbb{R}$ is a fundamental domain for $2 \pi \mathbb{Z}$.
\item[2.] The complex adjoint of $B^*$ is defined by:
\begin{align*}
B v(x) & = \int_{\mathcal{E} + i \mathbb{R}} \overline{\sum_{ \nu \in 2 \pi \mathbb{Z}} k(z -x + \nu;\hbar) e^{(i/ \hbar) J \nu}} e^{-2 \Phi_1(z)/ \hbar} v(z) L(dz), \\
& = \int_{ \mathbb{C}} \overline{k(z- x;\hbar)} e^{-2 \Phi_1(z)/ \hbar}  v(z) L(dz).
\end{align*}
Therefore $B$ coincides with the inverse of the FBI transform $T_{ \phi_1}^*$.
\end{enumerate} 
\end{prop}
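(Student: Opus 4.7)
The plan is to extend the FBI transform $T_{\phi_1}$ from $L^2(\mathbb{R})$ to the Floquet space $L^2_{\Flo}(\mathbb{S}^1)$ by periodizing the kernel $k$ over the lattice $2\pi\mathbb{Z}$. I would view $u \in L^2_{\Flo}(\mathbb{S}^1)$ as a locally square integrable function on $\mathbb{R}$ satisfying $u(x+2\pi) = e^{-(i/\hbar)J} u(x)$; the naive integral $\int_\mathbb{R} k(z-x;\hbar) u(x)\,dx$ is not absolutely convergent because $u$ does not decay, so the first step is to partition $\mathbb{R} = \bigsqcup_{\nu \in 2\pi\mathbb{Z}} (\mathcal{E}+\nu)$, perform the change of variable $x \mapsto x+\nu$ in each piece, and use the Floquet relation to pull out a phase factor, obtaining the periodized kernel $\sum_{\nu \in 2\pi\mathbb{Z}} k(z-x+\nu;\hbar) e^{(i/\hbar)J\nu}$ on $\mathcal{E}$. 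Absolute convergence of this series is a direct consequence of the Gaussian decay of $k(z-x+\nu;\hbar)$ in $\nu$, and Fubini immediately gives the equivalence of the two expressions for $B^*u(z)$ announced in the statement.

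Next I would verify the structural properties of $B^*u$. Each summand $z \mapsto k(z-x+\nu;\hbar)$ is entire in $z$, and the series together with all its $z$-derivatives converges locally uniformly thanks to the Gaussian tail, so $B^*u$ is holomorphic. The Floquet periodicity $B^*u(z+2\pi) = e^{-(i/\hbar)J} B^*u(z)$ comes from a reindexing $\nu \mapsto \nu - 2\pi$ in the sum, matched by the corresponding phase shift in $e^{(i/\hbar)J\nu}$. The membership in $H_{\Flo}(\mathbb{S}^1 + i\mathbb{R}, \Phi_1)$ reduces, via the Schur test applied on a single fundamental domain, to the weighted Gaussian estimates on $k$ that already underpin the unitarity of $T_{\phi_1}$ in Proposition~\ref{prop_Tphi}.

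For the second part I would compute the adjoint of $B^*$ with respect to the Hermitian inner products on $L^2_{\Flo}(\mathbb{S}^1)$ and $L^2_{\Flo}(\mathbb{S}^1 + i\mathbb{R}, \Phi_1)$, both of which are obtained by integration over a single fundamental domain; swapping the order of integration produces the periodized kernel expression for $Bv(x)$. The decisive step is an \emph{unfolding} argument in the opposite direction: in the term of the series indexed by $\nu$, substitute $z \mapsto z - \nu$, use the Floquet relation for $v$ together with the translation invariance of $\Phi_1$ and of $L(dz)$ under real shifts, and check that the phase factors $e^{-(i/\hbar)J\nu}$ cancel exactly against the $v$-phase. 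The translated integrals over $\mathcal{E} + i\mathbb{R}$ then reassemble into a single integral $\int_\mathbb{C} \overline{k(z-x;\hbar)}\, e^{-2\Phi_1(z)/\hbar} v(z)\,L(dz)$, which by Proposition~\ref{prop_Tphi} is exactly $T_{\phi_1}^* v(x)$. The main obstacle is keeping the Floquet phases consistent throughout the two periodization/unfolding steps; once that bookkeeping is done, the identification $B = T_{\phi_1}^*$ on $H_{\Flo}(\mathbb{S}^1 + i\mathbb{R}, \Phi_1)$ is immediate.
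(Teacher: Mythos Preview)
The paper does not supply its own proof of this proposition: it is quoted verbatim from \cite{MR2003421} and left unproved in the text. Your sketch is the standard argument and is essentially what the cited reference does---periodize the Gaussian kernel over $2\pi\mathbb{Z}$ using the Floquet relation, use the Gaussian decay to justify convergence and holomorphy, then unfold the adjoint back to a full $\mathbb{C}$-integral to recognise $T_{\phi_1}^*$. There is nothing to compare against here, and your outline is correct.

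One small bookkeeping point worth double-checking when you write this out: the exponent in the Floquet factor. With the convention $u(x+2\pi)=e^{-(i/\hbar)J}u(x)$, iterating gives $u(x-\nu)=e^{(i/\hbar)J\,\nu/(2\pi)}u(x)$ for $\nu\in 2\pi\mathbb{Z}$, whereas the displayed formula carries $e^{(i/\hbar)J\nu}$. This is a harmless normalisation mismatch (either in how $J$ is scaled or a typo inherited from the source), but make sure your phases are internally consistent in both the periodization and the unfolding steps, since that cancellation is exactly the ``main obstacle'' you flag.
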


\noindent Since the FBI transform $T_{ \phi_1}$ is a unitary operator according to Proposition \ref{prop_Tphi}, then we can deduce that $B^*$ is also a unitary operator.

\begin{prop}[\cite{MR2003421}] \label{prop_B_unitaire} $ $
\begin{enumerate}
\item[1.] $B B^* = 1$ on $L^2_{\Flo}( \mathbb{S}^1)$;
\item[2.] $B^* B=1$ on $H_{\Flo}( \mathbb{S}^1 + i \mathbb{R}, \Phi_1)$.
\end{enumerate}
\end{prop}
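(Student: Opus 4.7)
The plan is to deduce both unitarity identities directly from the unitarity of the FBI transform $T_{\phi_1}$ (Proposition \ref{prop_Tphi}) via a Floquet-periodization argument. First I would identify any $u \in L^2_\Flo(\mathbb{S}^1)$ with its Floquet-periodic extension $\tilde u$ to $\mathbb{R}$, and analogously any $v \in H_\Flo(\mathbb{S}^1 + i\mathbb{R}, \Phi_1)$ with its holomorphic Floquet-periodic extension to $\mathbb{C}$. The formula $B^* u(z) = \int_\mathbb{R} k(z-x;\hbar)\tilde u(x)\,dx$ is then literally $T_{\phi_1}\tilde u$, and the elementary bound $|k(z-x;\hbar)|\,e^{-\Phi_1(z)/\hbar} = c\,\hbar^{-3/4} e^{-(\Re z - x)^2/(2\hbar)}$, combined with the boundedness of $\tilde u$, ensures absolute convergence. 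The translation covariance $k(z-x+2\pi;\hbar) = k((z+2\pi)-x;\hbar)$ together with the Floquet condition on $\tilde u$ gives that $B^* u$ is itself $J$-Floquet-periodic, and a dual estimate handles $Bv(x) = T_{\phi_1}^* \tilde v(x)$.

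To prove $BB^* = 1$ on $L^2_\Flo(\mathbb{S}^1)$ I would compute $\langle B^* u, B^* v\rangle_{H_\Flo}$ for $u,v \in L^2_\Flo(\mathbb{S}^1)$ and show it equals $\langle u, v\rangle_{L^2_\Flo(\mathbb{S}^1)}$. Expanding both $B^*$'s via their integral kernels, I would split $\int_\mathbb{R} dx$ into $\sum_{\mu \in 2\pi\mathbb{Z}}\int_{\mathcal{E}+\mu}$ and simultaneously shift $z \mapsto z + \mu$ in the outer $z$-integral, using $\Phi_1(z+\mu) = \Phi_1(z)$ to preserve the weight. The Floquet phases coming from $\tilde u(x+\mu)$ and $\overline{\tilde v(y)}$ combine to $1$ since $J \in \mathbb{R}$, and the reassembled sum $\sum_\mu \int_{\mathcal{E}+\mu + i\mathbb{R}}$ becomes $\int_\mathbb{C}$. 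What remains inside is exactly the Schwartz kernel of $T_{\phi_1}^* T_{\phi_1} = \mathrm{Id}_{L^2(\mathbb{R})}$, and evaluating it yields $\int_\mathcal{E} \tilde u\,\overline{\tilde v}\,d\tilde x$, which is $\langle u,v\rangle_{L^2_\Flo(\mathbb{S}^1)}$ up to the normalisation. The second identity $B^* B = 1$ on $H_\Flo(\mathbb{S}^1+i\mathbb{R},\Phi_1)$ follows by the same unfolding: pair $B^* B v$ with a test $v' \in H_\Flo$, run the Floquet argument in the $z$-variable, and invoke the other half of Proposition \ref{prop_Tphi}, namely that $T_{\phi_1} T_{\phi_1}^*$ is the reproducing kernel of $H(\mathbb{C},\Phi_1)$, to conclude that the reproducing property transfers to the holomorphic Floquet extension of $v$.

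The main obstacle is the bookkeeping of the Floquet phases combined with the two simultaneous shifts (of $x$, and of $z$), and the justification of exchanging the sum over $\mu \in 2\pi\mathbb{Z}$ with the triple integral. The required convergence is essentially free, because $k$ is Gaussian and the Floquet-periodic densities are bounded on strips $|\Im z| \leq R$, but one should check that testing the resulting reproducing kernel against a Floquet-periodic function is legitimate (for instance by approximating Floquet-periodic functions by compactly supported ones and passing to the limit). Once this is done the computation collapses cleanly to the two unitarity identities for $T_{\phi_1}$ already proved in Proposition \ref{prop_Tphi}, yielding the proposition.
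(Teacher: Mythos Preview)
Your proposal is correct and follows exactly the route the paper indicates: the paper does not give a detailed proof of this proposition but simply remarks, just before stating it, that since $T_{\phi_1}$ is unitary (Proposition~\ref{prop_Tphi}) one deduces that $B^*$ is unitary, and cites \cite{MR2003421} for the details. Your Floquet-periodization/unfolding argument is precisely the mechanism by which the unitarity of $T_{\phi_1}$ on $L^2(\mathbb{R})$ transfers to the Floquet spaces, and you have supplied considerably more detail than the paper itself.
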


\noindent We also have an Egorov theorem in the $L^2( \mathbb{S}^1)$-case that we deduce from Proposition \ref{prop_H_phi_0}.

\begin{prop}\label{prop_B*_Egorov}
Let $P^{ \epsilon}_{ \hbar}$ be a pseudo-differential operator acting on $L^2_{\Flo}( \mathbb{S}^1)$ and satisfying the hypothesis (A') to (D'). Let  $\tilde{P}^{ \epsilon}_{ \hbar} = B^* \circ P^{ \epsilon}_{ \hbar} \circ B$. Then:
\begin{enumerate}
\item[1.] $ \tilde{P}^{ \epsilon}_{ \hbar} : H_{\Flo}( \mathbb{S}^1 + i \mathbb{R}, \Phi_1, \tilde{m}) \longrightarrow H_{\Flo}( \mathbb{S}^1 + i \mathbb{R}, \Phi_1)$ is uniformly bounded in $\hbar$ and $ \epsilon$ (for $ \hbar < 1$ and $ \epsilon < \epsilon_0$ where $ \epsilon_0 $ is a fixed positive real number), where $ \tilde{m} = m \circ \overline{\kappa}_{ \phi_1}^{-1}$ is an order function on:
$$ \Lambda_{ \Phi_1} = \left\lbrace (y, \eta) \in (T^* \mathbb{S}^1)_{ \mathbb{C}}; \eta = \dfrac{2}{i} \dfrac{\partial \Phi_1}{\partial y}(y) \right\rbrace; $$
\item[2.] $ \tilde{P}^{ \epsilon}_{ \hbar}$ is given by the contour integral:
$$ \tilde{P}^{ \epsilon}_{ \hbar} u(x) = \dfrac{1}{2 \pi \hbar} \int \! \! \! \int_{\Gamma(x)} e^{(i/\hbar) (x-y)\eta} \tilde{p}^{ \epsilon}_{ \hbar} \left( \dfrac{x+y}{2}, \eta \right) u(y) dy d\eta, $$
where $ \Gamma(x) = \left\lbrace (y, \eta) \in (T^* \mathbb{S}^1)_{ \mathbb{C}}; \eta = \dfrac{2}{i} \dfrac{\partial \Phi_1}{\partial x} \left( \dfrac{x+y}{2} \right) \right\rbrace$ and where the symbol $ \tilde{p}^{ \epsilon}_{ \hbar}$ is given by $ \tilde{p}^{ \epsilon}_{ \hbar} = p^{ \epsilon}_{ \hbar} \circ \overline{\kappa}_{ \phi_1}^{-1}$.
\end{enumerate} 
\end{prop}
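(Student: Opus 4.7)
The plan is to reduce the statement to Proposition \ref{prop_H_phi_0} by lifting from the circle to the real line and using the Floquet periodization that relates $B^*$ to the FBI transform $T_{\phi_1}$. Throughout, the key point is that the symbol $p^{\epsilon}_{\hbar}(\theta,I)$ is $2\pi$-periodic in $\theta$ and the weight $\Phi_1(x)=\tfrac12|\Im(x)|^2$ is invariant under real translations $x\mapsto x+2\pi$, so every contour deformation carried out on $\mathbb{R}\times\mathbb{R}$ descends to $(T^*\mathbb{S}^1)_{\mathbb{C}}$.

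First I would identify functions in $L^2_{\Flo}(\mathbb{S}^1)$ with Floquet-periodic locally square-integrable functions on $\mathbb{R}$, and functions in $H_{\Flo}(\mathbb{S}^1+i\mathbb{R},\Phi_1)$ with the corresponding multi-valued holomorphic functions on $\mathbb{C}$, exactly as in the definition of $B$ and $B^*$. Under this identification, $B^*$ is obtained from $T_{\phi_1}$ by summing the kernel $k(z-x+\nu;\hbar)$ over $\nu\in 2\pi\mathbb{Z}$ with the Floquet twist $e^{(i/\hbar)J\nu}$, and similarly $B$ is the restriction of $T_{\phi_1}^*$. Since the symbol $p^{\epsilon}_{\hbar}$ is $2\pi$-periodic in $\theta$, the Weyl quantization $P^{\epsilon}_{\hbar}$ on $L^2(\mathbb{R})$ commutes with the Floquet-twisted translation $u(x)\mapsto e^{(i/\hbar)J\cdot 2\pi}u(x+2\pi)$, hence it preserves $L^2_{\Flo}(\mathbb{S}^1)$. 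In particular $\tilde{P}^{\epsilon}_{\hbar}=B^*\circ P^{\epsilon}_{\hbar}\circ B$ is well-defined between the appropriate Floquet spaces.

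For the contour integral formula, I would apply Proposition \ref{prop_H_phi_0} to $P^{\epsilon}_{\hbar}$ acting on $L^2(\mathbb{R})$ conjugated by $T_{\phi_1}$, which yields a kernel representation with contour $\Gamma(x)=\{(y,\eta);\,\eta=\tfrac{2}{i}\partial_x\Phi_1((x+y)/2)\}$ and symbol $p^{\epsilon}_{\hbar}\circ\kappa_{\phi_1}^{-1}$. Summing over the lattice $2\pi\mathbb{Z}$ with the Floquet factor and using $\Phi_1(x+2\pi)=\Phi_1(x)$ shows that the same formula, now with $\overline{\kappa}_{\phi_1}^{-1}$ in place of $\kappa_{\phi_1}^{-1}$ and the contour taken inside $(T^*\mathbb{S}^1)_{\mathbb{C}}$, represents $\tilde{P}^{\epsilon}_{\hbar}$. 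This gives item (2).

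For item (1), the uniform boundedness follows from the boundedness in the non-periodic case (Proposition \ref{prop_H_phi_0}) together with the fact that $B$, $B^*$ are unitary by Proposition \ref{prop_B_unitaire}: the weighted estimate $\|\tilde{P}^{\epsilon}_{\hbar}u\|_{H_{\Flo}(\Phi_1)}\lesssim\|u\|_{H_{\Flo}(\Phi_1,\tilde{m})}$ is obtained by summing the local weighted estimates (with order function $\tilde{m}=m\circ\overline{\kappa}_{\phi_1}^{-1}$) over one fundamental domain of $2\pi\mathbb{Z}$; the tail contributions in $\nu\ne 0$ decay at least polynomially thanks to the sub-multiplicative bound in the definition of an order function and the Gaussian kernel $k$, so they are dominated by the $\nu=0$ term uniformly in $\hbar<1$ and $\epsilon<\epsilon_0$. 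The main technical step is precisely checking that these $\nu\ne 0$ tail contributions in the periodization do not destroy uniformity in $\hbar$; here one exploits that $|k(z-x+\nu;\hbar)|e^{-\Phi_1(z)/\hbar}e^{-\Phi_1(x)/\hbar}\lesssim \hbar^{-1/2}e^{-|\nu|^2/(C\hbar)}$ away from the diagonal, which together with the polynomial growth of $\tilde{m}$ yields the required convergence.
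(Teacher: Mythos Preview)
Your proposal is correct and follows exactly the approach the paper indicates: the paper does not give a detailed proof of this proposition but simply states that it is ``deduced from Proposition~\ref{prop_H_phi_0}'', and your reduction via the lift to $\mathbb{R}$ and the identification of $B^*$, $B$ with $T_{\phi_1}$, $T_{\phi_1}^*$ on Floquet-periodic functions is precisely that deduction spelled out. If anything, you have supplied more detail (the tail estimates for the periodization) than the paper itself does.
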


\begin{prop} \label{prop_B*_Egorov_2}
With the notation of Proposition \ref{prop_B*_Egorov}, let $ \Phi_2$ be a function of class $ \mathcal{C}^{1,1}$ close to $ \Phi_1$ in the following sense:
\begin{enumerate}
\item[1.] $ \Phi_2 - \Phi_1$ is bounded;
\item[2.] $ \sup \left| \dfrac{\partial \Phi_2}{\partial x} - \dfrac{\partial \Phi_1}{\partial x} \right|$ is sufficiently small.
\end{enumerate}
Then $ \tilde{P}^{ \epsilon}_{ \hbar}: H_J( \mathbb{S}^1 + i \mathbb{R}, \Phi_2, \tilde{m}) \longrightarrow H_J( \mathbb{S}^1 + i \mathbb{R}, \Phi_2)$ is uniformly bounded in $\hbar$ and $ \epsilon$ (for $ \hbar < 1$ and $ \epsilon < \epsilon_0$ where $ \epsilon_0 $ is a fixed positive real number).
\end{prop}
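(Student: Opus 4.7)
The plan is to adapt the contour-deformation argument underlying Proposition~\ref{prop_H_Phi} to the Floquet-periodic setting on $\mathbb{S}^1+i\mathbb{R}$. First, I would invoke Proposition~\ref{prop_B*_Egorov} to express $\tilde{P}^{\epsilon}_{\hbar}$ as the contour integral over $\Gamma(x)$ defined from $\Phi_1$. The goal is to show that this integral can equivalently be computed on a new contour $\Gamma_{2}(x)$ attached to $\Phi_2$, together with a damping twist of the form $\frac{i}{C}\frac{\overline{x-y}}{\langle x-y\rangle}$ (exactly as in Proposition~\ref{prop_H_Phi}) to guarantee absolute convergence, and that on this new contour the operator is bounded between the weighted Floquet spaces $H_{\Flo}(\mathbb{S}^1+i\mathbb{R},\Phi_2,\tilde{m})$ and $H_{\Flo}(\mathbb{S}^1+i\mathbb{R},\Phi_2)$.

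The first step is the contour deformation. Since $\tilde{p}^{\epsilon}_{\hbar}=p^{\epsilon}_{\hbar}\circ\overline{\kappa}_{\phi_1}^{-1}$ is holomorphic in a tubular neighbourhood of $\Lambda_{\Phi_1}$, and since the closeness hypotheses (1) and (2) on $\Phi_2$ guarantee that a one-parameter family of contours $\Gamma_t(x)$ interpolating between $\Gamma(x)$ and $\Gamma_{2}(x)$ remains inside this holomorphic tube, Stokes' theorem applied to the closed holomorphic form $e^{(i/\hbar)(x-y)\eta}\,\tilde{p}^{\epsilon}_{\hbar}\!\left(\frac{x+y}{2},\eta\right)dy\,d\eta$ identifies the two integrals. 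The boundary contributions at infinity are killed by the exponential factor together with the order function bound on $\tilde{p}^{\epsilon}_{\hbar}$.

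Next, on $\Gamma_{2}(x)$ the damping term produces a negative real part in the phase, yielding a Gaussian kernel estimate of the form $|K(x,y)|\lesssim \hbar^{-1}\,\tilde{m}\!\left(\tfrac{x+y}{2}\right)\exp\!\left(-\tfrac{|x-y|^{2}}{C\hbar}\right)\exp\!\left(\tfrac{\Phi_2(x)-\Phi_2(y)}{\hbar}\right)$ off the diagonal, a computation which is standard (compare \cite{MR2003421}). A Schur test on this kernel, combined with a summation over the Floquet translates $\nu\in 2\pi\mathbb{Z}$ (whose absolute convergence is ensured by the Gaussian decay), provides uniform $L^{2}$-boundedness in $\hbar$ and $\epsilon$. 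The order function $\tilde{m}$ absorbs the mismatch between source and target weights, in exactly the same way as in Proposition~\ref{prop_H_phi_0}.

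The main technical obstacle is that $\Phi_{2}$ is only of class $\mathcal{C}^{1,1}$, so one cannot freely deform the contour inside a holomorphic region determined by $\Phi_{2}$ itself. The standard remedy, developed by Melin and Sjöstrand, is to work with an almost-holomorphic extension of $\partial_x\Phi_{2}$: the associated $\overline{\partial}$-error is pointwise controlled by the assumed smallness of $\sup|\partial_x\Phi_{2}-\partial_x\Phi_{1}|$, is integrated against the Gaussian factor above, and ultimately contributes only an $\mathcal{O}(\hbar^{\infty})$ term that is harmless for $L^{2}$-boundedness. The Floquet periodicity adds no essential difficulty: one reduces to a fundamental domain for $2\pi\mathbb{Z}$ and sums translates exactly as in the construction of $B^{\ast}$, the Gaussian decay of the kernel making this sum rapidly convergent uniformly in $\hbar$.
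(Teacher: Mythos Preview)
The paper does not give an explicit proof of this proposition; it is stated as the periodic analogue of Proposition~\ref{prop_H_Phi} (itself quoted without proof from \cite{SJ} and \cite{MR2003421}), and the remark following it simply asserts that both results carry over to the $L^2(\mathbb{S}^1)$-setting. Your proposal is the correct unpacking of that implicit argument: contour deformation using holomorphy of the symbol in a tube around $\Lambda_{\Phi_1}$, the damping twist $\frac{i}{C}\frac{\overline{x-y}}{\langle x-y\rangle}$ to produce off-diagonal Gaussian decay, a Schur test on the resulting kernel, and absorption of the Floquet periodicity by summing over $2\pi\mathbb{Z}$-translates with the Gaussian ensuring convergence. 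This is exactly the route the paper has in mind.

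One point in your write-up is misplaced and should be dropped. Your paragraph on the $\mathcal{C}^{1,1}$ obstacle and almost-holomorphic extensions misidentifies the difficulty. The region of holomorphy is not ``determined by $\Phi_2$ itself'': it is the fixed tube around $\Lambda_{\Phi_1}$ in which the symbol $\tilde{p}^{\epsilon}_{\hbar}$ is holomorphic, coming from hypothesis~(A'). The new contour $\Gamma_2(x)$, built from the merely Lipschitz function $\partial_x\Phi_2$, stays inside this tube precisely because of the smallness hypothesis~(2); the contour deformation then uses only holomorphy of the integrand in $(y,\eta)$, which is entirely unaffected by the regularity of $\Phi_2$. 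No almost-holomorphic extension of $\partial_x\Phi_2$ is needed --- that machinery is for non-analytic \emph{symbols}, not for rough weights. The place where $\mathcal{C}^{1,1}$ actually enters is the kernel estimate: one needs a second-order Taylor expansion of $\Phi_2(x)-\Phi_2(y)$ with bounded remainder to verify that the real part of the phase along $\Gamma_2(x)$ is negative of order $|x-y|^2/(C\hbar)$, and Lipschitz first derivatives are exactly what makes this go through.
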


\begin{remark} Propositions \ref{prop_B*_Egorov} and \ref{prop_B*_Egorov_2} hold in the $L^2( \mathbb{S}^1)$-case.
\end{remark}

\noindent To summarize, we have the following diagram (with the notations of Propositions \ref{prop_quantif_A} and \ref{prop_B*_Egorov}):
\begin{displaymath}
    \xymatrix{
        L^2( \mathbb{S}^1) \ar[r]^{U_0} \ar[d]_{B^*} & L^2_{\Flo}( \mathbb{S}^1) \ar[d]^{B^*} \\
        \overset{H( \mathbb{S}^1 + i \mathbb{R}, \Phi_1)}{H( \pi(U), \Phi_2)} \ar[r]_{A}       & H_{\Flo}( \pi(V), \Phi_1) }
\end{displaymath}
We can apply Proposition \ref{prop_B*_Egorov_2} in the $L^2( \mathbb{S}^1)$-case and obtain an operator:
$$ \tilde{P}^{ \epsilon}_{ \hbar} = B^* P^{ \epsilon}_{ \hbar} B : H( \mathbb{S}^1 + i \mathbb{R}, \Phi_2) \longrightarrow H( \mathbb{S}^1 + i \mathbb{R}, \Phi_2) .$$
Then, if $ \tilde{U}_0=BA$ microlocally we have by composition:
$$ \tilde{U}_0 \tilde{P}^{ \epsilon}_{ \hbar}  = g^{ \epsilon} \left( \dfrac{\hbar}{i} \dfrac{\partial}{ \partial \tilde{\theta}} \right) \tilde{U}_0 + \mathcal{O}(\hbar) ,$$
where $g^{ \epsilon}$ is the function given by Proposition \ref{prop_kappa} and where $g^{ \epsilon} \left( \dfrac{\hbar}{i} \dfrac{\partial}{\partial \tilde{\theta}} \right)$ is the Weyl quantization of the symbol $g^{ \epsilon}(\tilde{I})$ on $L^2_J( \mathbb{S}^1)$. \\

\noindent We can sum up what we have done in this paragraph by the following proposition.

\begin{prop} \label{prop_tilde(U)_0}
There exists a unitary operator $\tilde{U}_0: H( \pi(U), \Phi_2) \longrightarrow L^2_{\Flo}( \mathbb{S}^1)$ such that microlocally we have:
$$ \tilde{U}_0 \tilde{P}^{ \epsilon}_{ \hbar} = g^{ \epsilon} \left( \dfrac{\hbar}{i} \dfrac{\partial}{\partial \tilde{\theta}} \right) \tilde{U}_0  + \mathcal{O}(\hbar) ,$$
where $ \pi(U)$ is a suitable neighbourhood as in Proposition \ref{prop_quantif_A} and where $g^{ \epsilon}$ is an analytic function depending smoothly on $ \epsilon$ whose inverse is the action integral $ \tilde{I}$.
\end{prop}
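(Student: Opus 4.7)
The plan is to assemble $\tilde U_0$ as the composition $\tilde U_0 = BA$, where $A$ is the Fourier integral operator of Proposition~\ref{prop_quantif_A} quantizing $\tilde\kappa$, and $B$ is the inverse FBI transform from Proposition~\ref{prop_B_unitaire} realizing a unitary equivalence between $H_{\Flo}(\mathbb{S}^1 + i\mathbb{R}, \Phi_1)$ and $L^2_{\Flo}(\mathbb{S}^1)$. Unitarity of $\tilde U_0$ is then immediate at the microlocal level: $B$ is exactly unitary by Proposition~\ref{prop_B_unitaire}, and the remark after Proposition~\ref{prop_quantif_A} gives $A^*A = \id$ on $H(\pi(U),\Phi_2)$ and $AA^* = \id$ on $H_{\Flo}(\pi(V),\Phi_1)$ modulo $\mathcal{O}(\hbar^\infty)$.

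Next I would apply the Egorov theorem stated after Proposition~\ref{prop_quantif_A} to transport $\tilde P^{\epsilon}_{\hbar}$ through $A$, giving $A\tilde P^{\epsilon}_{\hbar} = \hat P^{\epsilon}_{\hbar} A + \mathcal{O}(\hbar^\infty)$ with principal symbol $\hat p^{\epsilon} = \tilde p^{\epsilon} \circ \tilde\kappa^{-1}$. Substituting $\tilde p^{\epsilon} = p^{\epsilon}\circ\overline\kappa_{\phi_1}^{-1}$ from Proposition~\ref{prop_B*_Egorov} and $\tilde\kappa^{-1} = \overline\kappa_{\phi_1}\circ\kappa^{-1}\circ\overline\kappa_{\phi_1}^{-1}$ produces $\hat p^{\epsilon} = p^{\epsilon}\circ\kappa^{-1}\circ\overline\kappa_{\phi_1}^{-1}$. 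By Proposition~\ref{prop_kappa}, $p^{\epsilon}\circ\kappa^{-1}(\tilde\theta,\tilde I) = g^{\epsilon}(\tilde I)$, and since $\overline\kappa_{\phi_1}^{-1}(x,\xi) = (x + i\xi, \xi)$ preserves the second coordinate, we conclude that on $\Lambda_{\Phi_1}$ the principal symbol $\hat p^{\epsilon}(x,\xi)$ depends only on $\xi$ and equals $g^{\epsilon}(\xi)$.

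To finish, I would recognize $\hat P^{\epsilon}_{\hbar}$ as the $B$-conjugate of a Fourier multiplier. Applying the $L^2(\mathbb{S}^1)$-version of Proposition~\ref{prop_B*_Egorov} to the operator $g^{\epsilon}(\hbar D_{\tilde\theta})$ on $L^2_{\Flo}(\mathbb{S}^1)$ yields a pseudo-differential operator on $H_{\Flo}(\mathbb{S}^1 + i\mathbb{R},\Phi_1)$ with principal symbol $g^{\epsilon}(\xi)$; by the principal symbol matching and the contour deformation of Proposition~\ref{prop_B*_Egorov_2}, we obtain microlocally near $\Lambda_{\Phi_1}$
\[ \hat P^{\epsilon}_{\hbar} = B^{*}\, g^{\epsilon}\!\left(\dfrac{\hbar}{i}\dfrac{\partial}{\partial\tilde\theta}\right) B + \mathcal{O}(\hbar). \]
Using $BB^* = \id$ from Proposition~\ref{prop_B_unitaire} and composing the two intertwinings,
\[ \tilde U_0 \tilde P^{\epsilon}_{\hbar} = B\bigl(\hat P^{\epsilon}_{\hbar} A\bigr) + \mathcal{O}(\hbar^\infty) = g^{\epsilon}\!\left(\dfrac{\hbar}{i}\dfrac{\partial}{\partial\tilde\theta}\right) BA + \mathcal{O}(\hbar) = g^{\epsilon}\!\left(\dfrac{\hbar}{i}\dfrac{\partial}{\partial\tilde\theta}\right)\tilde U_0 + \mathcal{O}(\hbar), \]
which is the stated identity.

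The main obstacle I anticipate is not a hard estimate but a careful bookkeeping of microlocalization: one must check that the cut-off $\chi$ built into $A$ in Proposition~\ref{prop_quantif_A}, together with the tubular neighbourhoods on which $\overline\kappa_{\phi_1}$ and $\kappa$ are holomorphic diffeomorphisms, are chosen so that the successive Egorov intertwinings take place on a common neighbourhood in $\Lambda_{\Phi_2}$, and that the weights $\Phi_1,\Phi_2$ satisfy the closeness hypotheses of Proposition~\ref{prop_B*_Egorov_2}. Once this framework is fixed, the argument reduces to the principal-symbol computation above, which accounts for the $\mathcal{O}(\hbar)$ remainder at this first step (the improvement to $\mathcal{O}(\hbar^\infty)$ being postponed to the iterative construction of $\tilde U$).
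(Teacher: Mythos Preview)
Your proposal is correct and follows exactly the paper's approach: the proposition is stated there as a summary of the preceding construction, and the paper likewise defines $\tilde U_0 = BA$, invokes the Egorov-type results for $A$ and $B$, and concludes ``by composition.'' You have simply made the principal-symbol computation $\hat p^{\epsilon} = p^{\epsilon}\circ\kappa^{-1}\circ\overline\kappa_{\phi_1}^{-1} = g^{\epsilon}(\xi)$ and the bookkeeping of neighbourhoods more explicit than the paper does.
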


\noindent We can improve this proposition by using an iterative procedure.

\begin{prop} \label{prop_tilde(UPU)=S}
There exists a unitary operator $\tilde{U}: H(\pi(U), \Phi_2) \longrightarrow L^2_{\Flo}( \mathbb{S}^1)$ such that microlocally:
$$ \tilde{U} \tilde{P}^{ \epsilon}_{ \hbar} = g^{ \epsilon}_{ \hbar} \left( \dfrac{\hbar}{i} \dfrac{\partial}{\partial \tilde{\theta}} \right) \tilde{U} + \mathcal{O}(\hbar^{ \infty}),$$
where $ \pi(U)$ is a suitable neighbourhood as in Proposition \ref{prop_quantif_A} and where $g^{ \epsilon}_{ \hbar}$ is an analytic function admitting an asymptotic expansion in powers of $\hbar$, depending smoothly on $ \epsilon$ and whose first term $g^{ \epsilon}_0 := g^{ \epsilon}$ is the inverse of the action integral $ \tilde{I}$.
\end{prop}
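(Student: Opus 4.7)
The plan is to upgrade Proposition \ref{prop_tilde(U)_0} to the desired $\mathcal{O}(\hbar^\infty)$ statement by an iterative Birkhoff-type normal form procedure, removing the remainder order by order in $\hbar$. Starting from $\tilde{U}_0$ and $g^\epsilon_{\hbar,0} := g^\epsilon$ given by Proposition \ref{prop_tilde(U)_0}, I will construct inductively, for every integer $N \geq 0$, a microlocally unitary operator $\tilde{U}_N : H(\pi(U), \Phi_2) \to L^2_{\Flo}(\mathbb{S}^1)$ and analytic functions $g^\epsilon_0, \ldots, g^\epsilon_N$ of $\tilde{I}$ depending smoothly on $\epsilon$, such that with $g^\epsilon_{\hbar,N}(\tilde{I}) := \sum_{j=0}^{N}\hbar^j g^\epsilon_j(\tilde{I})$ one has, microlocally,
\[
\tilde{U}_N \tilde{P}^\epsilon_\hbar = g^\epsilon_{\hbar,N}\!\left(\frac{\hbar}{i}\frac{\partial}{\partial \tilde{\theta}}\right) \tilde{U}_N + \mathcal{O}(\hbar^{N+1}).
\]
The case $N = 0$ is exactly Proposition \ref{prop_tilde(U)_0}.

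For the inductive step, I would write the level-$N$ remainder as $\hbar^{N+1} R_{N+1}$, where $R_{N+1}$ is a semiclassical pseudo-differential operator on the relevant weighted spaces with principal symbol $r_{N+1}(\tilde{\theta}, \tilde{I})$, and seek $\tilde{U}_{N+1} = V_{N+1} \tilde{U}_N$ with $V_{N+1} = \exp(i \hbar^{N} B_{N+1})$, where $B_{N+1}$ is the Weyl quantization of a classical symbol $b_{N+1}(\tilde{\theta}, \tilde{I})$ to be determined together with $g^\epsilon_{N+1}$. Expanding by the Baker--Campbell--Hausdorff formula and using that the commutator of two pseudo-differential operators gains one power of $\hbar$, the new remainder is seen to have principal symbol, modulo $\mathcal{O}(\hbar^{N+2})$,
\[
r_{N+1}(\tilde{\theta}, \tilde{I}) + \{b_{N+1}, g^\epsilon_0\}(\tilde{\theta}, \tilde{I}) - g^\epsilon_{N+1}(\tilde{I}).
\]
Since $g^\epsilon_0$ depends only on $\tilde{I}$, one has $\{b_{N+1}, g^\epsilon_0\} = -(g^\epsilon_0)'(\tilde{I})\, \partial_{\tilde{\theta}} b_{N+1}$, so requiring this symbol to vanish yields the cohomological equation
\[
(g^\epsilon_0)'(\tilde{I})\, \partial_{\tilde{\theta}} b_{N+1}(\tilde{\theta}, \tilde{I}) = r_{N+1}(\tilde{\theta}, \tilde{I}) - g^\epsilon_{N+1}(\tilde{I}).
\]

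This equation is solvable in the class of functions $2\pi$-periodic in $\tilde{\theta}$ precisely when $g^\epsilon_{N+1}(\tilde{I})$ is chosen as the $\tilde{\theta}$-average of $r_{N+1}(\,\cdot\,, \tilde{I})$, which determines $g^\epsilon_{N+1}$ uniquely; by Proposition \ref{prop_kappa}, $(g^\epsilon_0)'(\tilde{I}) \neq 0$ on the relevant parameter range (it is the local diffeomorphism inverse to the action $\tilde{I} \mapsto E$), so $b_{N+1}$ is then recovered by direct integration in $\tilde{\theta}$, up to an arbitrary function of $\tilde{I}$ alone. Holomorphy in $(\tilde{\theta}, \tilde{I})$ and smooth dependence on $\epsilon$ propagate from those of $r_{N+1}$, so $B_{N+1}$ is quantizable as a bounded operator on the weighted Bargmann spaces appearing in Propositions \ref{prop_B*_Egorov} and \ref{prop_B*_Egorov_2}. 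A Borel summation of the sequences $(g^\epsilon_j)$ and $(B_j)$ then produces an analytic function $g^\epsilon_\hbar \sim \sum \hbar^j g^\epsilon_j$ depending smoothly on $\epsilon$ and a unitary $V$ with $V \equiv V_N \cdots V_1$ modulo $\mathcal{O}(\hbar^{N+1})$ for every $N$; setting $\tilde{U} := V \tilde{U}_0$ gives the announced identity, with $g^\epsilon_0$ still the inverse of the action $\tilde{I}$ by construction.

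The main obstacle I anticipate is technical rather than conceptual: at each step one must verify that $R_{N+1}$ is a genuine semiclassical pseudo-differential operator whose principal symbol lies in the correct holomorphic class on the relevant tubular neighbourhood of $\mathbb{S}^1 \times \mathbb{R}$, so that the cohomological equation can be posed and solved within the scale of weighted spaces used throughout Section 2.2. The non-self-adjointness of $\tilde{P}^\epsilon_\hbar$ for $\epsilon > 0$ forces $b_{N+1}$ and $g^\epsilon_{N+1}$ to be complex-valued, so that $V_{N+1}$ is unitary only in the microlocal sense attached to the weight $\Phi_2$ coming from $\tilde{\kappa}$; this is precisely the framework set up in Proposition \ref{prop_tilde(kappa)_envoie_Phi2_sur_Phi1}, and justifies using the FBI-transform machinery deployed earlier. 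Modulo these functional-analytic checks, the scheme is a one-dimensional adaptation of the normal-form argument of Hitrik and Sj\"ostrand \cite{MR2036816}.
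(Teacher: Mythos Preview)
Your proposal is correct and follows essentially the same iterative Birkhoff normal form argument as the paper: start from $\tilde{U}_0$, at each step solve the cohomological equation $\partial_{\tilde\theta} b = ((g^\epsilon_0)')^{-1}(r - g^\epsilon_{N+1})$ by choosing $g^\epsilon_{N+1}$ as the $\tilde\theta$-average of the remainder symbol, and conjugate by the resulting (microlocally) unitary correction. The only cosmetic differences are that the paper writes the first correction as $v=e^{ia}$ and the subsequent ones in the form $W=\mathrm{Id}+\hbar V$ rather than your uniform $\exp(i\hbar^{N}B_{N+1})$, and it leaves the final asymptotic summation implicit where you invoke Borel resummation explicitly.
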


\begin{proof}
Let $ \tilde{U}_0$ be the operator defined in Proposition \ref{prop_tilde(U)_0}, if $S_0 := g^{ \epsilon} \left( \dfrac{\hbar}{i} \dfrac{\partial}{\partial \tilde{\theta}} \right)$ then we have:
\begin{equation} \label{eq_demo_1}
 \tilde{U}_0 \tilde{P}^{ \epsilon}_{ \hbar} = S_0 \tilde{U}_0 + \mathcal{O}(\hbar) := (S_0 + \hbar R_1) \tilde{U}_0.
\end{equation}
We want to modify $ \tilde{U}_0$ to obtain our result. More precisely, we first look for a unitary operator $V$ such that:
\begin{equation} \label{eq_demo_2}
V (\tilde{U}_0 \tilde{P}^{ \epsilon}_{ \hbar})  = (S_0 + \hbar S_1) V \tilde{U}_0 +  \mathcal{O}(\hbar^2),
\end{equation}
with $S_1 = g_1^{ \epsilon} \left( \dfrac{\hbar}{i} \dfrac{\partial}{\partial \tilde{\theta}} \right)$ and where $g_1^{ \epsilon}$ is a function to determine. We have, according to Equations \eqref{eq_demo_1} and \eqref{eq_demo_2}:
\begin{align*}
V ( \tilde{U}_0 \tilde{P}^{ \epsilon}_{ \hbar} ) =V (S_0 \tilde{U}_0 + \hbar R_1 \tilde{U}_0) & = (S_0 + \hbar S_1) V \tilde{U}_0 + \mathcal{O}( \hbar^2), \\
VS_0 \tilde{U}_0 + \hbar VR_1 \tilde{U}_0 & = S_0 V \tilde{U}_0 + \hbar S_1 V \tilde{U}_0 + \mathcal{O}(\hbar^2), \\
VS_0  - S_0 V  & = \hbar S_1 V - \hbar VR_1 + \mathcal{O}(\hbar^2), \\
[V, S_0] & = \hbar S_1 V - \hbar V R_1 + \mathcal{O}(\hbar^2).
\end{align*}
In terms of principal symbols, this means:
$$ \dfrac{1}{i} \lbrace v ( \tilde{\theta}, \tilde{I}), s_0 ( \tilde{\theta}, \tilde{I}) \rbrace = v( \tilde{\theta}, \tilde{I}) (s_1( \tilde{\theta}, \tilde{I})-r_1( \tilde{\theta}, \tilde{I})) \quad \text{for $( \tilde{\theta}, \tilde{I}) \in (T^* \mathbb{S}^1)_{ \mathbb{C}}$} .$$
Let $v = e^{ia}$, then we have:
$$ \dfrac{1}{i} \lbrace v, s_0 \rbrace = \dfrac{1}{i} \left( \dfrac{\partial e^{ia}}{\partial \tilde{\theta}} \dfrac{\partial s_0}{\partial \tilde{I}} - \dfrac{\partial e^{ia}}{\partial \tilde{I}} \dfrac{\partial s_0}{\partial \tilde{\theta}} \right) = \dfrac{1}{i} i e^{ia} \left( \dfrac{\partial a}{\partial \tilde{\theta}} \dfrac{\partial s_0}{\partial \tilde{I}} - \dfrac{\partial a}{\partial \tilde{I}} \dfrac{\partial s_0}{\partial \tilde{\theta}} \right) = e^{ia} \lbrace a, s_0 \rbrace.$$
Therefore $e^{ia} \lbrace a, s_0 \rbrace = e^{ia}(s_1-r_1)$, \textit{i.e.} $ \lbrace a, s_0 \rbrace = s_1 - r_1$. Moreover, we know that $s_0 = g^{ \epsilon}( \tilde{I})$, so:
\begin{equation} \label{eq_derivee_de_a}
s_1-r_1 = \lbrace a, g^{ \epsilon}( \tilde{I}) \rbrace = \dfrac{\partial a}{\partial \tilde{\theta}} \dfrac{dg^{ \epsilon}}{d \tilde{I}} \quad \text{\textit{i.e.}} \quad \dfrac{\partial a}{\partial \tilde{\theta}} = \left( \dfrac{d g^{ \epsilon}}{d \tilde{I}} \right)^{-1} (s_1-r_1) .
\end{equation}
Since $ \lbrace a, s_0 \rbrace = s_1 - r_1$, then:
\begin{equation} \label{eq_s1_integral_r1}
s_1  = \dfrac{1}{2 \pi} \int r_1 d \tilde{\theta}.
\end{equation}
Consequently, we can determine $s_1$ by using Equation \eqref{eq_s1_integral_r1} and $\partial_{\tilde{ \theta}} a$ by using Equation \eqref{eq_derivee_de_a}. Then, since $ \int \partial_{ \tilde{\theta}} a d \tilde{\theta} = 0$, we can well-define $a$. \\
Thus, we obtain:
$$ (V \tilde{U}_0) \tilde{P}^{ \epsilon}_{ \hbar} := \left( g^{ \epsilon} \left( \dfrac{\hbar}{i} \dfrac{\partial}{\partial \tilde{ \theta}} \right) + \hbar g_1^{ \epsilon} \left( \dfrac{\hbar}{i} \dfrac{\partial}{\partial \tilde{\theta}} \right) + \hbar^2 R_2 \right) (V \tilde{U}_0 ).$$
We then reiterate this process with the operator $W = Id + \hbar V$. This iterative procedure yields the result.
\end{proof}

\subsubsection{Spectrum} $ $\\
\noindent \texttt{Notation:} We denote by $S^{ \epsilon}_{ \hbar}$ the operator acting on $L^2_{ \Flo}( \mathbb{S}^1)$ of symbol $g^{ \epsilon}_{ \hbar} ( \tilde{I})$, \textit{i.e.} $\tilde{U} \tilde{P}^{ \epsilon}_{ \hbar} = S^{ \epsilon}_{ \hbar}\tilde{U} + \mathcal{O}(\hbar^{ \infty}) $ according to Proposition \ref{prop_tilde(UPU)=S} (where $ \tilde{P}_{ \hbar}^{ \epsilon} = B^* P_{ \hbar}^{ \epsilon} B$). \\

\noindent First, we have the following results.

\begin{prop} \label{spectre_de_S}
The spectrum of the operator $S^{ \epsilon}_{ \hbar}$ is given by:
$$ \sigma(S^{ \epsilon}_{ \hbar}) = \lbrace g^{ \epsilon}_{ \hbar}(\hbar k-J), k \in \mathbb{Z} \rbrace,$$
where $g_{ \hbar}^{ \epsilon}$ is the function given by Proposition \ref{prop_tilde(UPU)=S}.
\end{prop}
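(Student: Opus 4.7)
The key observation is that $S^{\epsilon}_{\hbar}$ is, by construction, the Weyl quantization on $L^2_{\Flo}(\mathbb{S}^1)$ of the symbol $g^{\epsilon}_{\hbar}(\tilde{I})$, which depends only on the momentum variable. Since such a symbol is independent of $\tilde{\theta}$, its Weyl quantization coincides with the Fourier multiplier $g^{\epsilon}_{\hbar}\left(\dfrac{\hbar}{i}\dfrac{\partial}{\partial\tilde{\theta}}\right)$. My plan is therefore to reduce the problem to computing the spectrum of $\hbar D_{\tilde{\theta}}$ on $L^2_{\Flo}(\mathbb{S}^1)$ and then to apply the functional calculus to recover $\sigma(S^{\epsilon}_{\hbar})$.

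To diagonalize $\hbar D_{\tilde{\theta}}$ on the Floquet space, I look for plane-wave eigenfunctions of the form $\psi_{\lambda}(\tilde{\theta}) = e^{i\lambda\tilde{\theta}/\hbar}$, which automatically satisfy $\hbar D_{\tilde{\theta}} \psi_{\lambda} = \lambda \psi_{\lambda}$. Imposing the Floquet periodicity condition $\psi_{\lambda}(\tilde{\theta} + 2\pi) = e^{-iJ/\hbar}\psi_{\lambda}(\tilde{\theta})$ forces $e^{2i\pi\lambda/\hbar} = e^{-iJ/\hbar}$, which selects exactly the discrete set of admissible eigenvalues $\lambda = \hbar k - J$, $k \in \mathbb{Z}$ (up to the usual $2\pi$ normalization in $J$). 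After a suitable $L^2$-normalization, the family $\lbrace \psi_{\hbar k - J} \rbrace_{k \in \mathbb{Z}}$ is a complete orthonormal basis of $L^2_{\Flo}(\mathbb{S}^1)$: indeed, multiplication by $e^{-iJ\tilde{\theta}/\hbar}$ is a unitary isomorphism $L^2(\mathbb{S}^1) \to L^2_{\Flo}(\mathbb{S}^1)$ that intertwines ordinary and Floquet periodicity, sending the standard Fourier basis $\lbrace e^{ik\tilde{\theta}} \rbrace$ of $L^2(\mathbb{S}^1)$ to the family above.

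Combining these two steps, $\hbar D_{\tilde{\theta}}$ has pure point spectrum $\lbrace \hbar k - J ;\, k \in \mathbb{Z} \rbrace$ on $L^2_{\Flo}(\mathbb{S}^1)$, and applying the holomorphic function $g^{\epsilon}_{\hbar}$ via the spectral calculus diagonally on this basis yields the claimed identity
$$\sigma(S^{\epsilon}_{\hbar}) = \lbrace g^{\epsilon}_{\hbar}(\hbar k - J);\, k \in \mathbb{Z} \rbrace.$$
I do not expect a genuine obstacle here, since the argument is a direct Fourier series computation once the reduction to a Fourier multiplier is made. The only mild care needed is to verify that $g^{\epsilon}_{\hbar}$ is well-defined and bounded on the relevant window of $\tilde{I}$; this is inherited from the analyticity of $g^{\epsilon}_{\hbar}$, its asymptotic expansion in $\hbar$, and its smooth dependence on $\epsilon$ established in Proposition \ref{prop_tilde(UPU)=S}, so that the functional calculus and the identification $S^{\epsilon}_{\hbar} = g^{\epsilon}_{\hbar}(\hbar D_{\tilde{\theta}})$ present no domain issues.
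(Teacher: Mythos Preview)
Your proof is correct and follows essentially the same approach as the paper: the paper's proof simply states that the family $(e_l(\tilde{\theta}))_{l \in \mathbb{Z}} = (e^{il\tilde{\theta}} e^{-(i/\hbar)J\tilde{\theta}})_{l \in \mathbb{Z}}$ is an orthonormal basis of $L^2_{\Flo}(\mathbb{S}^1)$, which is exactly the family $\psi_{\hbar k - J}$ you construct. Your argument is a more detailed version of the same idea, explicitly exhibiting the unitary intertwiner $u \mapsto e^{-iJ\tilde{\theta}/\hbar}u$ from $L^2(\mathbb{S}^1)$ and spelling out the functional-calculus step.
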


\begin{proof}
The family $(e_l( \tilde{\theta}))_{l \in \mathbb{Z}} = (e^{il \tilde{\theta}} e^{-(i/\hbar)J \tilde{\theta}})_{l \in \mathbb{Z}}$ for $ \tilde{\theta} \in [0, 2 \pi]$ is an orthonormal basis of the space $L^2_{\Flo}( \mathbb{S}^1)$.
\end{proof}

\begin{prop} \label{prop_spectre_P_tildeP}
Let $P^{ \epsilon}_{ \hbar}$ and $ \tilde{P}^{ \epsilon}_{ \hbar}$ be the operators previously defined. Then $  \sigma(P^{ \epsilon}_{ \hbar}) = \sigma( \tilde{P}^{ \epsilon}_{ \hbar})$.
\end{prop}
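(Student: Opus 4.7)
The claim should be essentially immediate from the unitarity statement established in the Proposition that asserts $BB^{*}=1$ on $L^{2}_{\Flo}(\mathbb{S}^{1})$ and $B^{*}B=1$ on $H_{\Flo}(\mathbb{S}^{1}+i\mathbb{R},\Phi_{1})$ (which, applied in the $L^{2}(\mathbb{S}^{1})$-setting, gives unitarity between $L^{2}(\mathbb{S}^{1})$ and $H(\mathbb{S}^{1}+i\mathbb{R},\Phi_{2})$). The plan is therefore to prove the equality of spectra by the standard conjugation-by-unitary argument: show that $\lambda\in\mathbb{C}$ lies in the resolvent set of $P^{\epsilon}_{\hbar}$ if and only if it lies in the resolvent set of $\tilde{P}^{\epsilon}_{\hbar}=B^{*}P^{\epsilon}_{\hbar}B$.

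First I would fix the functional framework. Since $B^{*}:L^{2}(\mathbb{S}^{1})\to H(\mathbb{S}^{1}+i\mathbb{R},\Phi_{2})$ and $B:H(\mathbb{S}^{1}+i\mathbb{R},\Phi_{2})\to L^{2}(\mathbb{S}^{1})$ are mutual inverses (by the unitarity proposition, adapted from the Floquet case to the $L^{2}(\mathbb{S}^{1})$ case as indicated after Proposition \ref{prop_B*_Egorov_2}), the operator $\tilde{P}^{\epsilon}_{\hbar}=B^{*}P^{\epsilon}_{\hbar}B$ is well-defined on $H(\mathbb{S}^{1}+i\mathbb{R},\Phi_{2})$ with domain $B^{*}(\mathrm{Dom}(P^{\epsilon}_{\hbar}))$, and maps this space into itself by Proposition \ref{prop_B*_Egorov_2}. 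I would then note that $P^{\epsilon}_{\hbar}$ has discrete spectrum in a neighbourhood of $E_{0}$ (by the ellipticity at infinity assumed in (C')), so the question reduces to comparing eigenvalues and, more generally, resolvents.

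Next I would check the two inclusions $\sigma(P^{\epsilon}_{\hbar})\subset\sigma(\tilde{P}^{\epsilon}_{\hbar})$ and the reverse. For the first, if $\lambda\notin\sigma(\tilde{P}^{\epsilon}_{\hbar})$, then $(\lambda-\tilde{P}^{\epsilon}_{\hbar})^{-1}$ exists as a bounded operator on $H(\mathbb{S}^{1}+i\mathbb{R},\Phi_{2})$, and I claim that
\begin{equation*}
(\lambda-P^{\epsilon}_{\hbar})^{-1}=B(\lambda-\tilde{P}^{\epsilon}_{\hbar})^{-1}B^{*}
\end{equation*}
is a bounded inverse on $L^{2}(\mathbb{S}^{1})$. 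This is verified directly by computing $(\lambda-P^{\epsilon}_{\hbar})B(\lambda-\tilde{P}^{\epsilon}_{\hbar})^{-1}B^{*}=B(\lambda-\tilde{P}^{\epsilon}_{\hbar})(\lambda-\tilde{P}^{\epsilon}_{\hbar})^{-1}B^{*}=BB^{*}=\mathrm{Id}$ and symmetrically on the other side, using $BB^{*}=\mathrm{Id}$ and $B^{*}B=\mathrm{Id}$ together with $BB^{*}P^{\epsilon}_{\hbar}=P^{\epsilon}_{\hbar}$. The reverse inclusion is obtained identically by exchanging the roles of $B$ and $B^{*}$.

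The only mild obstacle is bookkeeping: one must check that the algebraic manipulations above are legitimate on the natural domains, i.e.\ that $B$ and $B^{*}$ map the domain of $\tilde{P}^{\epsilon}_{\hbar}$ to $\mathrm{Dom}(P^{\epsilon}_{\hbar})$ and vice versa, which is automatic from the definition $\tilde{P}^{\epsilon}_{\hbar}=B^{*}P^{\epsilon}_{\hbar}B$ combined with the unitarity $B^{*}B=\mathrm{Id}$. Once these domain issues are handled, no other difficulty arises, and the equality $\sigma(P^{\epsilon}_{\hbar})=\sigma(\tilde{P}^{\epsilon}_{\hbar})$ follows.
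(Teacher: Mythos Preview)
Your proposal is correct and follows exactly the same idea as the paper's proof: the paper simply observes that $\tilde{P}^{\epsilon}_{\hbar}=B^{*}P^{\epsilon}_{\hbar}B$ with $B^{*}$ unitary, and concludes immediately that the spectra coincide. Your argument is just a detailed unfolding of this standard fact via the resolvent identity; the only minor slip is that $B^{*}$ maps $L^{2}(\mathbb{S}^{1})$ into $H(\mathbb{S}^{1}+i\mathbb{R},\Phi_{1})$ rather than $\Phi_{2}$ (the passage to $\Phi_{2}$ is a separate contour deformation, Proposition~\ref{prop_B*_Egorov_2}), but this does not affect the argument.
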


\begin{proof}
There exists some unitary operator $B^*$ such that $\tilde{P}^{ \epsilon}_{ \hbar} = B^* P^{ \epsilon}_{ \hbar} B$, therefore the spectrum of the operator $\tilde{P}^{ \epsilon}_{ \hbar}$ is equal to the spectrum of the operator $P^{ \epsilon}_{ \hbar}$.
\end{proof}

We want to describe the spectrum of the operator $ \tilde{P}^{ \epsilon}_{ \hbar}$ by using the spectrum of the operator $S^{ \epsilon}_{ \hbar}$ that we know explicitly. To do so, we follow the method used in \cite{MR2036816,MR2003421} except that in our case the operator $S^{ \epsilon}_{ \hbar}$ obtained by conjugacy from $\tilde{P}^{ \epsilon}_{ \hbar}$ is easier to manipulate. \\
More precisely, we want to describe the spectrum of the operator $ \tilde{P}^{ \epsilon}_{ \hbar}$ in a rectangle of the form:
$$ R_{C, \epsilon_0} = \left\lbrace z \in \mathbb{C}; | \Re (z) -E_0| <  \dfrac{1}{C}, | \Im (z) | < \dfrac{\epsilon_0}{C} \right\rbrace ,$$
for $E_0 \in \mathbb{R}$, where $ \epsilon_0 >0$ is a sufficiently small fixed real number and $C>0$ is a constant. Therefore, we will use some microlocal analysis in a neighbourhood of $ \lbrace \tilde{p}^{ \epsilon}_{ \hbar} = E \rbrace$ where $E \in R_{C, \epsilon_0}$. \\

\noindent \texttt{Notation:}
\begin{itemize}
\item $ \tilde{ \Lambda}_E^{ \epsilon} = \lbrace (y, \eta) \in \tub(\Lambda_{ \Phi_2}); \tilde{p}^{ \epsilon}(y, \eta) = p^{ \epsilon} \circ \overline{\kappa}_{ \phi_1}^{-1}(y, \eta) = E \rbrace$ where $ \tub( \Lambda_{ \Phi_2})$ denotes a tubular neighbourhood of $ \Lambda_{ \Phi_2}$ in $ (T^* \mathbb{S}^1)_{ \mathbb{C}}$;
\item let $I_0 \in \mathbb{R}$ be the constant such that $ (\overline{\kappa}_{ \phi_1}^{-1} \circ \tilde{\kappa}) (\tilde{\Lambda}_{E_0}) = \lbrace \tilde{I} = I_0  \rbrace$ where:
$$ \tilde{\Lambda}_{E_0} = \lbrace ( y, \eta) \in \Lambda_{ \Phi_2}; \left. \tilde{p}^{ \epsilon}(y, \eta) \right|_{ \epsilon = 0} =E_0 \rbrace.$$
\end{itemize}
$ $ \\
We consider the set of quasi-eigenvalues for the operator $S^{ \epsilon}_{ \hbar}$, namely:
$$ \Sigma( \epsilon, \hbar) = \lbrace g^{ \epsilon}_{ \hbar}(\hbar k- J), k \in \mathbb{Z} \rbrace \cap R_{C, \epsilon_0} .$$
First, we can estimate the distance between two elements of the set $ \Sigma( \epsilon, \hbar)$; indeed, let $z=g^{ \epsilon}_{ \hbar}(\hbar k- J)$ and $ \tilde{z}=g^{ \epsilon}_{ \hbar}(\hbar l- J)$ with $k, l \in \mathbb{Z}$ and $k \neq l$. We assume that $z, \tilde{z} \in \Sigma( \epsilon, \hbar )$. \\
Then:
$$ |z- \tilde{z}| \geq \dfrac{\hbar |k-l|}{\mathcal{O}(1)}.$$
\noindent Let:
$$ \delta_{ \hbar} = \dfrac{1}{4} \inf_{k \neq l} \dist(g^{ \epsilon}_{ \hbar}(\hbar k- J), g^{ \epsilon}_{ \hbar}(\hbar l- J)) > \dfrac{\hbar}{\mathcal{O}(1)} ;$$
and consider a family of open discs of the form:
$$ \Omega_k(\hbar) = \lbrace z \in R_{C, \epsilon_0}; |z-g^{ \epsilon}_{ \hbar}(\hbar k- J)| < \delta_{ \hbar} \rbrace .$$

\begin{remark}
The sets $ \Omega_k(\hbar)$ are disjoints (because the distance between two elements of the set $ \Sigma( \epsilon, \hbar)$ is greater than $ \delta_{ \hbar}$).
\end{remark}

We want to show that the spectrum of the operator $ \tilde{P}^{ \epsilon}_{ \hbar}$ in the rectangle $R_{C, \epsilon_0}$ is contained in  the union of discs $\Omega_k(\hbar)$. Therefore, we consider the following equation for $z \in R_{C, \epsilon_0}$:
\begin{equation} \label{eq_(tilde(P)-z)u=v}
( \tilde{P}^{ \epsilon}_{ \hbar} - z) u = v \quad \text{with $u, v \in H( \mathbb{S}^1 + i \mathbb{R}, \Phi_2)$}.
\end{equation}

\noindent First, outside a small neighbourhood of $ \tilde{\Lambda}_E^{ \epsilon} $ in $ \tub(\Lambda_{ \Phi_2})$, there exists a constant $C>0$ such that:
$$
 |\tilde{p}^{ \epsilon}_{ \hbar}( y, \eta)-E| > \dfrac{1}{C}.$$
Indeed, by definition, we have:
\begin{align*}
\tilde{\Lambda}_E^{ \epsilon} & = \lbrace (y, \eta) \in \tub(\Lambda_{ \Phi_2}); \tilde{p}^{ \epsilon}(y, \eta) =E \rbrace, \\
& = \left\lbrace (y, \eta) \in \tub(\Lambda_{ \Phi_2}); \left| \tilde{p}^{ \epsilon}(y, \eta)-E \right|=0 \right\rbrace;
\end{align*}
So, for $(y, \eta) \notin V$ a small neighbourhood of $ \tilde{\Lambda}_E^{ \epsilon} $ in $ \tub( \Lambda_{ \Phi_2})$, there exists a constant $C_1 > 0$ such that:
$$ | \tilde{p}^{ \epsilon}(y, \eta)-E| > \dfrac{1}{C_1} .$$
Besides, we can deduce from Assumption (C'), that for $(y,  \eta) \in \tub( \Lambda_{\Phi_2})$, we have:
$$ \left| \tilde{p}^{ \epsilon}(y, \eta) \right| \geq \dfrac{1}{C} \tilde{m}( \Re( \eta)), \quad \text{for $ |y, \eta| \geq C$} .$$
Let $K = \lbrace (y, \eta) \in \tub( \Lambda_{ \Phi_2}); |(y, \eta)| \leq  C \rbrace$. We assume that $K$ is such that for $(y, \eta) \notin K$, we have $|E| \leq \dfrac{1}{2C}\tilde{m}( \Re( \eta))$. We distinguish two cases:
\begin{itemize}
\item either $(y, \eta) \notin V$ and $(y, \eta) \in K$, then by continuity:
$$ \dfrac{| \tilde{p}^{ \epsilon}(y, \eta) -E|}{\tilde{m}( \Re( \eta))} \neq 0, $$
so there exist a constant $C>0$ such that:
$$ \dfrac{| \tilde{p}^{ \epsilon}(y, \eta) -E|}{\tilde{m}( \Re( \eta))} \geq \dfrac{1}{C} \quad \textit{i.e.} \quad | \tilde{p}^{ \epsilon}(y, \eta) -E| \geq \dfrac{1}{C} \tilde{m}( \Re( \eta)).$$
\item or $(y, \eta) \notin V$ and $(y, \eta) \notin K$, then by the ellipticity condition, we have:
$$ \left| \tilde{p}^{ \epsilon}(y, \eta) \right| \geq \dfrac{1}{C} \tilde{m}( \Re( \eta)),$$
then:
$$ \left| \tilde{p}^{ \epsilon}(y, \eta) -E \right| \geq \left| \tilde{p}^{ \epsilon}(y, \eta) \right| - |E| \geq \dfrac{1}{C} \tilde{m}( \Re( \eta)) - \dfrac{1}{2C} \tilde{m}( \Re( \eta)) = \dfrac{1}{2C} \tilde{m}( \Re( \eta)).$$
\end{itemize}
Consequently, for $(y, \eta) \notin V$, there exist a constant $C>0$ such that:
$$ \left| \tilde{p}^{ \epsilon}(y, \eta)-E \right| \geq \dfrac{1}{2C} \tilde{m}( \Re( \eta)) .$$
We denote by $ \tilde{r}^{ \epsilon}_{ \hbar}$ the function such that $ \tilde{p}^{ \epsilon}_{ \hbar}(y, \eta) = \tilde{p}^{ \epsilon}(y, \eta) + \hbar \tilde{r}^{ \epsilon}_{ \hbar}(y, \eta)$.
Therefore, for $(y, \eta) \notin V$, we have:
\begin{align*}
| \tilde{p}_{ \hbar}^{ \epsilon}(y, \eta) -E| & =  | \tilde{p}_{ \hbar}^{ \epsilon}(y, \eta) - \tilde{p}^{ \epsilon}(y, \eta) + \tilde{p}^{ \epsilon}(y, \eta)-E |, \\
& \geq |\tilde{p}^{ \epsilon}(y, \eta)-E| - | \tilde{p}_{ \hbar}^{ \epsilon}(y, \eta) - \tilde{p}^{ \epsilon}(y, \eta)|, \\
& \geq  \dfrac{1}{2C} \tilde{m}( \Re( \eta)) - \left| \hbar r^{ \epsilon}_{ \hbar}(y, \eta) \right|, \\
& \geq \dfrac{1}{2C} \tilde{m}( \Re( \eta)) - \hbar C \tilde{m}( \Re( \eta)), \quad \text{according to Assumption (B')} \\
& = \left( \dfrac{1}{2C} - \hbar C \right) \tilde{m}( \Re( \eta)).
\end{align*}
Therefore, for $(y, \eta) \notin V$, there exist a constant $C>0$ such that:
$$ |\tilde{p}_{ \hbar}^{ \epsilon}( y, \eta)-E| > \dfrac{1}{C} .$$
Consequently, for $(y, \eta) \notin V$, there exist a constant $C>0$ such that:
$$ \left| \Re( \tilde{p}_{ \hbar}^{ \epsilon}(y, \eta)-E)\right| > \dfrac{1}{C} \quad \text{or} \quad  \left| \Im( \tilde{p}_{ \hbar}^{ \epsilon}(y, \eta)-E)\right| > \dfrac{1}{C}.$$

\noindent \texttt{Notation:} Let $a \in \mathcal{C}^{ \infty}_b( \tub( \Lambda_{ \Phi_2}))$. We denote by $\Op_{ \Phi_2}(a)$ the quantization of the symbol $a$ defined, for $u \in H( \mathbb{S}^1 + i \mathbb{R}, \Phi_2)$, by:
$$ \Op_{ \Phi_2}(a) u(x) = \dfrac{1}{2 \pi \hbar} \int \! \! \! \int_{ \Gamma(x)} e^{(i/\hbar)(x-y) \eta} a \left( \dfrac{x+y}{2}, \eta \right) u(y) dy d \eta ,$$
where $ \Gamma(x) = \left\lbrace (y, \eta) \in (T^* \mathbb{S}^1)_{ \mathbb{C}}; \eta = \dfrac{2}{i} \dfrac{\partial \Phi_2}{\partial x} \left( \dfrac{x+y}{2} \right) \right\rbrace$ (so $ \left( \dfrac{x+y}{2}, \eta \right) \in \Lambda_{\Phi_2}$). \\
Recall that: $\Op_{ \Phi_2}(a) : H( \mathbb{S}^1 + i \mathbb{R}, \Phi_2) \longrightarrow H( \mathbb{S}^1 + i \mathbb{R}, \Phi_2)$. \\

\noindent Let $X_{ \left. p^{ \epsilon} \right|_{ \epsilon=0}}$ be the flow of the Hamiltonian vector field associated to $ \left. p^{ \epsilon} \right|_{ \epsilon =0 }$. Let $X_{ \left. \tilde{p}^{ \epsilon} \right|_{ \epsilon=0}}$ be the image by the function $ \overline{\kappa}_{ \Phi_1}$ of the real flow of $X_{ \left. p^{ \epsilon} \right|_{ \epsilon=0}}$. We consider a partition of unity on the manifold $ \tub(\Lambda_{ \Phi_2})$:
$$ 1 = \chi + \psi_1^+ + \psi_1^- + \psi_2^+ + \psi_2^-,$$
with:
\begin{enumerate}
\item[1.] $\chi \in \mathcal{C}^{ \infty}_0( \tub(\Lambda_{ \Phi_2}))$ a smooth function such that $ \chi = 1$ in a neighbourhood of $ \tilde{\Lambda}_E^{ \epsilon}$ and such that its support is contained in a small neighbourhood of $ \tilde{\Lambda}_{E}^{ \epsilon} $ where: $\tilde{U} \tilde{P}^{ \epsilon}_{ \hbar} \Op_{ \Phi_2}( \chi) =S^{ \epsilon}_{ \hbar} \tilde{U} \Op_{ \Phi_2}( \chi) + \mathcal{O}(\hbar^{ \infty})$;
\item[2.] $ \psi_1^{\pm} \in \mathcal{C}_0^{ \infty}((T^* \mathbb{S}^1)_{ \mathbb{C}})$ a smooth function supported in a region invariant under the flow of $X_{ \left. \tilde{p}^{ \epsilon} \right|_{ \epsilon=0}}$ and where:
$$ \Im( \tilde{p}_{ \hbar}^{ \epsilon}-E) > \pm \dfrac{1}{C};$$
\item[3.] $ \psi_2^{\pm} \in \mathcal{C}^{ \infty}_b( \tub( \Lambda_{ \Phi_2}))$ a smooth function supported in a region where:
$$ \Re( \tilde{p}_{ \hbar}^{ \epsilon} -E) > \pm \dfrac{1}{C} .$$
\end{enumerate}
Moreover, we can choose the functions $ \psi_1^{ \pm}$ such that their Poisson brackets commute with $ \left. \tilde{p}^{ \epsilon} \right|_{ \epsilon =0 }$. \\

\noindent To show the pertinence of this partition of unity, we are going to look at some properties where it intervenes. The proofs of these propositions are similar to what is done in \cite{MR2036816}, thus we do not recall them here.

\begin{prop} \label{prop_norme_(1-chi)u}
Let $u, v \in H( \mathbb{S}^1 + i \mathbb{R}, \Phi_2)$ satisfying Equation \eqref{eq_(tilde(P)-z)u=v}. Then, we have:
$$ \| \Op_{ \Phi_2}(1- \chi)u \| \leq \mathcal{O} \left( 1 \right) \| v \| + \mathcal{O}(\hbar^{ \infty}) \| u \| .$$
\end{prop}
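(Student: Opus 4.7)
The plan is to decompose
\[
\Op_{\Phi_2}(1-\chi) = \Op_{\Phi_2}(\psi_1^+) + \Op_{\Phi_2}(\psi_1^-) + \Op_{\Phi_2}(\psi_2^+) + \Op_{\Phi_2}(\psi_2^-),
\]
and to estimate each of the four pieces $\Op_{\Phi_2}(\psi_j^\pm) u$ separately by constructing a microlocal parametrix for $\tilde{P}^{\epsilon}_{\hbar} - z$ on the corresponding support. The key observation, already prepared in the discussion preceding the statement, is that for $z \in R_{C, \epsilon_0}$ with $\epsilon_0$ small enough and with a suitable choice of the constants defining the partition of unity, the symbol $\tilde{p}^{\epsilon}_{\hbar} - z$ is elliptic on each of those supports: on $\supp(\psi_1^{\pm})$ (compact, contained in $\tub(\Lambda_{\Phi_2})$) the imaginary part of $\tilde{p}^{\epsilon}_{\hbar} - z$ is bounded away from zero uniformly in $\hbar$ and $\epsilon$ (using $|\Im z|<\epsilon_0/C$), while on $\supp(\psi_2^{\pm})$ the real part is bounded below on any compact piece, and at infinity Assumption (C') combined with (B') yields $|\tilde{p}^{\epsilon}_{\hbar} - z| \geq \frac{1}{C'}\tilde{m}(\Re \eta)$ as shown by the computation just before the proposition.

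Given this ellipticity, for each $(j,\pm)$ I would build a parametrix $E_j^{\pm} = \Op_{\Phi_2}(e_j^{\pm})$ with $e_j^{\pm} \in S(\tilde m^{-1})$ such that
\[
E_j^{\pm}\bigl(\tilde{P}^{\epsilon}_{\hbar} - z\bigr) = \Op_{\Phi_2}(\psi_j^{\pm}) + R_j^{\pm}, \qquad R_j^{\pm} = \mathcal{O}(\hbar^{\infty}),
\]
by the usual iterative procedure: start from $e_{j,0}^{\pm} = \psi_j^{\pm}/(\tilde{p}^{\epsilon} - z)$ and correct order by order in $\hbar$ using the $\Phi_2$-contour symbolic calculus supplied by Propositions \ref{prop_B*_Egorov} and \ref{prop_B*_Egorov_2}, then Borel-sum the asymptotic expansion. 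Applying this identity to Equation \eqref{eq_(tilde(P)-z)u=v} gives
\[
\Op_{\Phi_2}(\psi_j^{\pm}) u = E_j^{\pm} v - R_j^{\pm} u,
\]
and since $E_j^{\pm} : H(\mathbb{S}^1 + i\mathbb{R}, \Phi_2) \to H(\mathbb{S}^1 + i\mathbb{R}, \Phi_2)$ is uniformly bounded in $\hbar$ and $\epsilon$ (Proposition \ref{prop_B*_Egorov_2} applied to $e_j^{\pm}$) while $R_j^{\pm}$ is $\mathcal{O}(\hbar^{\infty})$, summing the four contributions produces the announced inequality.

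The main obstacle is to make the parametrix construction uniform on the non-compact pieces, namely on $\supp(\psi_2^{\pm})$: one needs all iterative corrections and composition errors to stay bounded on the entire tubular neighbourhood of $\Lambda_{\Phi_2}$, which is precisely why the order function $\tilde m$ and the ellipticity at infinity of Assumption (C') are needed. For the $\psi_1^{\pm}$ pieces the invariance under the flow of $X_{\left.\tilde{p}^{\epsilon}\right|_{\epsilon=0}}$ and the Poisson-commutation with $\left.\tilde{p}^{\epsilon}\right|_{\epsilon=0}$ make the leading correction terms $\{e_1^{\pm}, \tilde{p}^{\epsilon}_{\hbar}\}$ vanish at principal order, which is what allows the remainders to be pushed to $\mathcal{O}(\hbar^{\infty})$ despite the calculus being only almost-holomorphic on the $IR$-manifold $\Lambda_{\Phi_2}$. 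These are exactly the technical points treated in \cite{MR2036816}, and one imports the construction without change to the present one-dimensional setting.
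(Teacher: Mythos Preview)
Your outline is correct and matches the paper's own treatment: the paper gives no detailed proof here and simply refers to \cite{MR2036816}, exactly as you do in your last sentence. The decomposition $1-\chi=\psi_1^++\psi_1^-+\psi_2^++\psi_2^-$ followed by a symbolic parametrix on each elliptic piece is the standard argument and goes through as you describe.

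One small correction concerning your final paragraph: the role you assign to the flow-invariance of $\psi_1^{\pm}$ is not quite right. The $\mathcal{O}(\hbar^{\infty})$ remainder in the parametrix construction comes from the ordinary symbolic calculus for $\Op_{\Phi_2}$ with smooth symbols (the full asymptotic expansion can always be Borel-summed, ``almost-holomorphic'' or not), and does \emph{not} rely on any special vanishing of $\{e_{1,0}^{\pm},\tilde p^{\epsilon}\}$; indeed that Poisson bracket does not vanish at principal order, since $e_{1,0}^{\pm}=\psi_1^{\pm}/(\tilde p^{\epsilon}-z)$ and the denominator does not Poisson-commute with $\tilde p^{\epsilon}$. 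The reason the paper (following \cite{MR2036816}) insists that $\psi_1^{\pm}$ be flow-invariant is that in the two-dimensional reference the estimate on those pieces is obtained by a \emph{weight modification} rather than a parametrix: one replaces $\Phi_2$ by $\Phi_2\mp t\,G$ for a suitable escape function $G$ built from $\psi_1^{\pm}$, and then the sign of $\Im(\tilde p^{\epsilon}-z)$ on $\Lambda_{\Phi_2}$ yields the bound directly. For that argument flow-invariance is the natural hypothesis; for the direct parametrix route you describe it is unnecessary, and your argument is actually cleaner without invoking it.
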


\noindent Then, from Equation \eqref{eq_(tilde(P)-z)u=v}, we have:
\begin{equation} \label{eq(tilpe(P)-z)chi u}
( \tilde{P}^{ \epsilon}_{ \hbar}-z) \Op_{ \Phi_2}(\chi) u  = \Op_{ \Phi_2}(\chi) v + w \quad \text{with $w = [ \tilde{P}^{ \epsilon}_{ \hbar}, \Op_{ \Phi_2}( \chi)]u$}.
\end{equation}
Since $w$ is microlocalized in the support of $[ \tilde{P}_{ \hbar}^{ \epsilon}, \Op_{ \Phi_2}( \chi)]$, which is contained outside a small neighbourhood of $ \tilde{\Lambda}_E^{ \epsilon}$, we can show using Proposition \ref{prop_norme_(1-chi)u} that:
\begin{equation} \label{eq_w}
\| w \| \leq \mathcal{O} \left( \hbar \right) \| v \| + \mathcal{O}(\hbar^{ \infty}) \| u \|.
\end{equation}

\noindent By applying the operator $ \tilde{U}$ on Equation \eqref{eq(tilpe(P)-z)chi u}, we obtain:
\begin{align*}
& \tilde{U} (( \tilde{P}^{ \epsilon}_{ \hbar}-z) \Op_{ \Phi_2}(\chi) u ) = \tilde{U}( \Op_{ \Phi_2}(\chi) v +w), \\
& \tilde{U} \tilde{P}^{ \epsilon}_{ \hbar} \Op_{ \Phi_2}(\chi) u - z \tilde{U} \Op_{ \Phi_2}(\chi) u = \tilde{U} \Op_{ \Phi_2}(\chi) v + \tilde{U} w, \\
& (S^{ \epsilon}_{ \hbar} \tilde{U} + \mathcal{O}(\hbar^{ \infty})) \Op_{ \Phi_2}(\chi) u - z \tilde{U} \Op_{ \Phi_2}(\chi) u =  \tilde{U} \Op_{ \Phi_2}(\chi) v + \tilde{U} w,
\end{align*}
because $ \tilde{U} \tilde{P}^{ \epsilon}_{ \hbar} \Op_{ \Phi_2} ( \chi) = S^{ \epsilon}_{ \hbar} \tilde{U} \Op_{ \Phi_2}( \chi) + \mathcal{O}(\hbar^{ \infty})$ by definition of the partition of unity. Therefore, we have:
\begin{equation} \label{eq3}
(S^{ \epsilon}_{ \hbar}-z) \tilde{U} \Op_{ \Phi_2}(\chi) u = \tilde{U} \Op_{ \Phi_2}(\chi) v + \tilde{U} w + T_{ \infty} u,
\end{equation} 
where $T_{ \infty} = \mathcal{O}(\hbar^{ \infty}):H( \mathbb{S}^1 + i \mathbb{R}, \Phi_2) \longrightarrow L^2_{\Flo}( \mathbb{S}^1)$. \\

\noindent From the explicit definition of the operator $S^{ \epsilon}_{ \hbar}$ we see that, if $z \in R_{C, \epsilon_0} \setminus \bigcup \Omega_k(\hbar)$, the operator $S^{ \epsilon}_{ \hbar}-z: L^2_{\Flo}( \mathbb{S}^1) \longrightarrow L^2_{\Flo}( \mathbb{S}^1)$ is microlocally invertible in the region where $ | \tilde{I} - I_0| \leq \dfrac{1}{\mathcal{O}(1)}$ (which corresponds to the domain where the operator $S_{ \hbar}^{ \epsilon}$ is well-defined) and its microlocal inverse is of the norm $ \mathcal{O} \left( \dfrac{1}{ \hbar} \right)$. Moreover, we also have the following proposition.

\begin{prop} \label{prop_norme_chiu}
Let $z \in R_{C, \epsilon_0} \setminus \bigcup \Omega_k(\hbar)$. Let $u, v \in H( \mathbb{S}^1 + i \mathbb{R}, \Phi_2)$ satisfying Equation \eqref{eq3}.
Then, we have the following estimate:
$$ \| \Op_{ \Phi_2}(\chi) u \| \leq \dfrac{\mathcal{O}(1)}{\hbar} \| v \| + \mathcal{O}(\hbar^{ \infty}) \| u \|.$$
\end{prop}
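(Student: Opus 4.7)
The plan is to apply the microlocal inverse of $S^\epsilon_\hbar - z$, available outside $\bigcup \Omega_k(\hbar)$, to Equation \eqref{eq3} and then to translate the resulting bound back through the (microlocal) unitarity of $\tilde U$ into an estimate on $\Op_{\Phi_2}(\chi) u$ itself.

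First, I would check that the microlocal inverse can indeed be applied to the right-hand side of \eqref{eq3}. Since $\chi \in \mathcal{C}^\infty_0(\tub(\Lambda_{\Phi_2}))$ is supported in a small neighbourhood of $\tilde\Lambda_E^\epsilon$, and since $\tilde U$ intertwines $\tilde P_\hbar^\epsilon$ with $S^\epsilon_\hbar$ by quantizing the composed canonical transformation $\overline{\kappa}_{\phi_1} \circ \kappa^{-1} \circ \overline{\kappa}_{\phi_1}^{-1}$ followed by $B$, the microsupport of $\tilde U \Op_{\Phi_2}(\chi) u$ sits in a small neighbourhood of $\{\tilde I = \tilde I(E)\}$. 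For $E \in R_{C,\epsilon_0}$ and $\epsilon < \epsilon_0$ small enough, $\tilde I(E)$ lies within $|\tilde I - I_0| \le 1/\mathcal{O}(1)$, which is exactly the region where the explicit multiplier $S^\epsilon_\hbar - z$ is microlocally invertible with inverse of norm $\mathcal{O}(1/\hbar)$.

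Once this is in place, I apply $(S^\epsilon_\hbar - z)^{-1}$ to both sides of \eqref{eq3} and take norms in $L^2_{\Flo}(\mathbb{S}^1)$:
\[
\|\tilde U \Op_{\Phi_2}(\chi) u\| \leq \frac{\mathcal{O}(1)}{\hbar}\bigl( \|\tilde U \Op_{\Phi_2}(\chi) v\| + \|\tilde U w\| + \|T_\infty u\| \bigr).
\]
Using the uniform boundedness of $\tilde U$ and of $\Op_{\Phi_2}(\chi)$, the estimate \eqref{eq_w} on $w$, and $\|T_\infty u\| = \mathcal{O}(\hbar^\infty)\|u\|$, this becomes
\[
\|\tilde U \Op_{\Phi_2}(\chi) u\| \leq \frac{\mathcal{O}(1)}{\hbar}\|v\| + \mathcal{O}(1) \|v\| + \mathcal{O}(\hbar^\infty)\|u\| = \frac{\mathcal{O}(1)}{\hbar}\|v\| + \mathcal{O}(\hbar^\infty)\|u\|.
\]
Finally, the microlocal unitarity of $\tilde U$ (Proposition \ref{prop_B_unitaire} composed with the unitarity provided by Proposition \ref{prop_quantif_A}) yields $\|\Op_{\Phi_2}(\chi) u\| \le \|\tilde U \Op_{\Phi_2}(\chi) u\| + \mathcal{O}(\hbar^\infty)\|u\|$, giving the announced estimate.

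The main obstacle is really the bookkeeping of microlocal regions rather than any hard analytic step: one has to verify uniformly in $E \in R_{C,\epsilon_0}$ and in $\epsilon < \epsilon_0$ that the image under $\overline{\kappa}_{\phi_1} \circ \kappa^{-1} \circ \overline{\kappa}_{\phi_1}^{-1}$ of a neighbourhood of $\tilde \Lambda_E^\epsilon$ sits inside the neighbourhood of $\{\tilde I = I_0\}$ on which $S^\epsilon_\hbar - z$ is microlocally invertible. This follows from the fact that $\tilde I$ is a local biholomorphism (shown in the construction preceding Proposition \ref{prop_kappa}) together with the smallness of $|E - E_0|$ and of $\epsilon$, so that no further quantitative work is needed beyond tracking constants through the chain of conjugations.
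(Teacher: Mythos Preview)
Your proof is correct and follows essentially the same approach as the paper: the paper's proof simply multiplies Equation~\eqref{eq3} by $\tilde U^{-1}(S^\epsilon_\hbar - z)^{-1}$ and invokes the $\mathcal{O}(1/\hbar)$ bound on the microlocal inverse, the estimate~\eqref{eq_w} on $w$, and the definition of $T_\infty$. You have merely split the single application of $\tilde U^{-1}(S^\epsilon_\hbar - z)^{-1}$ into two steps (first $(S^\epsilon_\hbar - z)^{-1}$, then the microlocal unitarity of $\tilde U$), and added an explicit check that the right-hand side lies in the region where the microlocal inverse applies; the substance is identical.
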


\begin{proof}
We multiply Equation \eqref{eq3} by $ \tilde{U}^{-1}(S^{ \epsilon}_{ \hbar}-z)^{-1}$ (where $ \tilde{U}^{-1}$ is the microlocal inverse of $ \tilde{U}$ which exists in the domain of the function $ \chi$) and use the estimate on the norm of the operator $S^{ \epsilon}_{ \hbar}-z$, the estimate on $w$ and the definition of $T_{ \infty}$.
\end{proof}

\noindent We deduce from Propositions \ref{prop_norme_(1-chi)u} and \ref{prop_norme_chiu}, that if $z \in R_{C, \epsilon_0} \setminus \bigcup \Omega_k(\hbar)$, then the operator $ \tilde{P}^{ \epsilon}_{ \hbar}-z: H( \mathbb{S}^1 + i \mathbb{R}, \Phi_2) \longrightarrow H( \mathbb{S}^1 + i \mathbb{R}, \Phi_2)$ is injective. \\
Besides the operator $ \tilde{P}^{ \epsilon}_{ \hbar}-z : H( \mathbb{S}^1 + i \mathbb{R}, \Phi_2, \tilde{m}) \longrightarrow H( \mathbb{S}^1 + i \mathbb{R}, \Phi_2)$ is also Fredholm of index $0$ (\textit{i.e.} it is an operator with finite-dimensional kernel and cokernel whose dimensions are the same). Namely, by the ellipticity of the principal symbol $\tilde{p}^{ \epsilon}$, we can construct an inverse for $\tilde{P}^{ \epsilon}_{ \hbar}-z+K$ where $K$ is a compact operator. Therefore, we obtain that $ \tilde{P}^{ \epsilon}_{ \hbar}-z+K$ is Fredholm of index $0$ and that proves the fact that the operator $ \tilde{P}_{ \hbar}^{ \epsilon}-z$ is also Fredholm of index $0$. \\
Therefore, if $z \in R_{C, \epsilon_0} \setminus \bigcup \Omega_k(\hbar)$ we obtain that:
$$ \tilde{P}^{ \epsilon}_{ \hbar}-z: H(\mathbb{S}^1 + i \mathbb{R}, \Phi_2, \tilde{m}) \longrightarrow H( \mathbb{S}^1 + i \mathbb{R}, \Phi_2),$$
is bijective. \\

We can sum up what we have done so far by saying that  the eigenvalues of the operator $ \tilde{P}^{ \epsilon}_{ \hbar}$ in $R_{C, \epsilon_0}$ are localized in the open discs $ \Omega_k(\hbar)$. We are now focusing on one of these discs.\\
Since the eigenfunctions are microlocalized in a neighbourhood of $ \tilde{I}= I_0$, then we consider the couples $(\hbar,k)$ such that $z \in \Omega_k( \hbar)$, \textit{i.e.} $|\hbar k-J - I_0| < \dfrac{1}{C}$. \\
We want to prove that $z \in \Omega_k(\hbar)$ is a $ \mathcal{O}( \hbar^{ \infty})$-close to an eigenvalue of the operator $\tilde{P}^{ \epsilon}_{ \hbar}$ if and only if:
$$ z = g^{ \epsilon}_{ \hbar}(\hbar k- J) + \mathcal{O}(\hbar^{ \infty}).$$
To do so, we are going to study two Grushin problems concerning $S^{ \epsilon}_{ \hbar}-z$ and $ \tilde{P}^{ \epsilon}_{ \hbar}-z$ respectively; we recall the definition of this problem (for more details on this linear algebraic tool see \cite{MR2394537}).

\begin{defi}[Grushin problem]
A Grushin problem for an operator $P:H_1 \longrightarrow H_2$ between two Hilbert spaces is a system:
$$
\left\lbrace
\begin{split}
& Pu+R_- u_- = v, \\
& R_+ u = v_+;
\end{split}
\right.$$
where $R_-:H_- \longrightarrow H_2$, $R_+: H_1 \longrightarrow H_+$, with $H_-, H_+$ two Hilbert spaces and where $(u, u_-) \in H_1 \times H_-$, $(v, v_+) \in H_2 \times H_+$. The matrix associated with the Grushin problem is defined by:
$$
\mathcal{P} : = \begin{pmatrix}
P & R_- \\ 
R_+ & 0
\end{pmatrix}  : H_1 \times H_- \longrightarrow H_2 \times H_+.$$
\end{defi}

\noindent First, we consider a Grushin problem for the operator $S^{ \epsilon}_{ \hbar}-z$. This problem is globally defined if we consider the function $g_{ \hbar}^{ \epsilon}$ (defining the operator $S_{ \hbar}^{ \epsilon}$) as a compactly supported one. \\
Let $(e_l)_{l \in \mathbb{Z}}$ be the functions defined for $l \in \mathbb{Z}$ and $ \tilde{\theta} \in [0, 2 \pi]$ by:
$$ e_l( \tilde{\theta}) = e^{(i/\hbar)( \hbar l- J)\tilde{\theta}} = e^{il \tilde{\theta}} e^{-(i/ \hbar) J \tilde{\theta}} .$$
The family of functions $(e_l)_{l \in \mathbb{Z}}$ forms an orthonormal basis of the space $L^2_{\Flo}( \mathbb{S}^1)$. \\
Let $\hat{R}_+$ and $ \hat{R}_-$ be the following operators:
\begin{align*}
\hat{R}_+: L^2_{\Flo}( \mathbb{S}^1) & \longrightarrow \mathbb{C}&  \quad \quad \hat{R}_-: \mathbb{C} & \longrightarrow L^2_{\Flo}( \mathbb{S}^1),\\
u & \longmapsto \langle u | e_k \rangle &  \quad \quad \tilde{u} & \longmapsto \tilde{u} e_k .
\end{align*}
We look at the following Grushin problem, for $(u, \tilde{u})$ and $(v, \tilde{v}) \in L^2_J( \mathbb{S}^1) \times \mathbb{C}$:
$$
\left\lbrace
\begin{split}
& ( S^{ \epsilon}_{ \hbar}-z) u + \hat{R}_- \tilde{u} = v, \\
& \hat{R}_+ u = \tilde{v}.
\end{split}
\right.$$

\begin{prop} \label{prop_pb_grushin_S}
Let:
$$
\mathcal{S} = \begin{pmatrix}
S^{ \epsilon}_{ \hbar}-z & \hat{R}_- \\ 
\hat{R}_+ & 0
\end{pmatrix}: L^2_{ \Flo}( \mathbb{S}^1) \times \mathbb{C} \longrightarrow L^2_{ \Flo}( \mathbb{S}^1) \times \mathbb{C}. $$
Then, the operator $ \mathcal{S}$ admits an inverse defined by:
$$ \hat{\mathcal{E}} = \begin{pmatrix}
\hat{E}(z) & \hat{E}_+ \\ 
\hat{E}_- & \hat{E}_{-,+}(z)
\end{pmatrix}, $$
with:
\begin{enumerate}
\item[1.] $ \hat{E}_+ = \hat{R}_-$;
\item[2.] $ \hat{E}_- = \hat{R}_+$;
\item[3.] $ \hat{E}_{-,+}(z) = z-g^{ \epsilon}_{ \hbar}(\hbar k-J)$;
\end{enumerate}
Furthermore, the components of the operator $ \hat{\mathcal{E}}$ satisfy the following estimates:
\begin{enumerate}
\item[(i)] $ \hat{E} = \dfrac{\mathcal{O}(1)}{ \hbar}: L^2_{\Flo}( \mathbb{S}^1) \longrightarrow L^2_{\Flo}( \mathbb{S}^1)$;
\item[(ii)] $ \hat{E}_+ = \mathcal{O}(1): \mathbb{C} \longrightarrow L^2_{\Flo}( \mathbb{S}^1)$;
\item[(iii)] $ \hat{E}_- = \mathcal{O}(1): L^2_{\Flo}( \mathbb{S}^1) \longrightarrow \mathbb{C}$;
\item[(iv)] $ \hat{E}_{-,+} = \mathcal{O}( \hbar): \mathbb{C} \longrightarrow \mathbb{C}$.
\end{enumerate}
Moreover, for all $( u, \tilde{u}), (v, \tilde{v}) \in L^2_{\Flo}( \mathbb{S}^1) \times \mathbb{C}$ satisfying $ \mathcal{S} (u, \tilde{u}) = (v, \tilde{v})$, we have the following estimate:
\begin{equation} \label{esti_grushin_S_h}
 \hbar \| u \|_{L^2_{\Flo}} + | \tilde{u} | \leq \mathcal{O}(1) ( \| v \|_{L^2_{\Flo}} +  \hbar | \tilde{v} | ).
\end{equation}
\end{prop}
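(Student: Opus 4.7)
The plan is to exploit the fact that $S^{\epsilon}_{\hbar}$ is diagonal in the Floquet basis $(e_l)_{l \in \mathbb{Z}}$, so the Grushin problem reduces to a scalar problem on each Fourier mode and can be solved by inspection. Writing $u = \sum_{l} u_l e_l$ and $v = \sum_{l} v_l e_l$ with $u_l, v_l \in \mathbb{C}$, and observing that $S^{\epsilon}_{\hbar} e_l = g^{\epsilon}_{\hbar}(\hbar l - J) e_l$ while $\hat{R}_+ u = u_k$ and $\hat{R}_- \tilde{u} = \tilde{u} e_k$, the Grushin system becomes, mode by mode,
\begin{equation*}
(g^{\epsilon}_{\hbar}(\hbar l - J) - z) u_l + \delta_{l,k}\, \tilde{u} = v_l, \qquad u_k = \tilde{v}.
\end{equation*}

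Solving this system is then immediate: for $l \neq k$ one sets $u_l = v_l / (g^{\epsilon}_{\hbar}(\hbar l - J) - z)$, while the $l = k$ equation is used not to determine $u_k$ (which is fixed to $\tilde{v}$ by the second equation), but instead to solve for $\tilde{u} = v_k + (z - g^{\epsilon}_{\hbar}(\hbar k - J))\tilde{v}$. Reading off the components of the inverse from the map $(v, \tilde{v}) \mapsto (u, \tilde{u})$, one obtains directly $\hat{E}_+ \tilde{v} = \tilde{v} e_k = \hat{R}_- \tilde{v}$, $\hat{E}_- v = v_k = \hat{R}_+ v$, and $\hat{E}_{-,+}(z) = z - g^{\epsilon}_{\hbar}(\hbar k - J)$, matching the claimed formulas. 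The operator $\hat{E}$ is the diagonal operator sending $e_l$ to $e_l / (g^{\epsilon}_{\hbar}(\hbar l - J) - z)$ for $l \neq k$ and annihilating $e_k$.

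The norm estimates then follow from the spacing hypothesis. By construction of $\delta_{\hbar}$, for $l \neq k$ one has $|g^{\epsilon}_{\hbar}(\hbar l - J) - g^{\epsilon}_{\hbar}(\hbar k - J)| \geq 4 \delta_{\hbar}$, and when $z \in \Omega_k(\hbar)$ this yields $|g^{\epsilon}_{\hbar}(\hbar l - J) - z| \geq 3 \delta_{\hbar}$; since $\delta_{\hbar} \geq \hbar / \mathcal{O}(1)$, Parseval gives $\|\hat{E}\| \leq 1/(3\delta_{\hbar}) = \mathcal{O}(1)/\hbar$. The bounds $\|\hat{E}_\pm\| = \mathcal{O}(1)$ are immediate from $\|e_k\| = 1$ and Cauchy--Schwarz, and $|\hat{E}_{-,+}(z)| < \delta_{\hbar} = \mathcal{O}(\hbar)$ because $z \in \Omega_k(\hbar)$. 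Combining the identities $u = \hat{E} v + \hat{E}_+ \tilde{v}$ and $\tilde{u} = \hat{E}_- v + \hat{E}_{-,+} \tilde{v}$ with these bounds gives
\begin{equation*}
\hbar \|u\|_{L^2_{\Flo}} \leq \mathcal{O}(1) \|v\|_{L^2_{\Flo}} + \mathcal{O}(\hbar) |\tilde{v}|, \qquad |\tilde{u}| \leq \mathcal{O}(1) \|v\|_{L^2_{\Flo}} + \mathcal{O}(\hbar) |\tilde{v}|,
\end{equation*}
whose sum is the stated estimate \eqref{esti_grushin_S_h}.

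The proof is essentially a bookkeeping exercise once the diagonalization is exploited; there is no real obstacle. The only subtle point worth emphasizing is that the Grushin perturbations $\hat{R}_\pm$ are chosen precisely to remove the single degenerate mode $l = k$, so the inverse $\hat{E}$ of the compressed operator acts only on the complementary subspace where the spectral gap $\delta_{\hbar}$ is available; this is what converts the near-singularity of $S^{\epsilon}_{\hbar} - z$ on $\Omega_k(\hbar)$ into the harmless scalar factor $\hat{E}_{-,+}(z) = z - g^{\epsilon}_{\hbar}(\hbar k - J)$ whose vanishing locus is exactly the spectrum in $\Omega_k(\hbar)$.
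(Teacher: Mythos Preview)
Your proof is correct and follows exactly the approach indicated in the paper's own proof, which is merely a two-sentence sketch: diagonalize $S^{\epsilon}_{\hbar}$ in the Floquet basis $(e_l)_{l\in\mathbb{Z}}$, solve the resulting scalar system mode by mode, and read off the components and norm bounds. You have simply written out the details that the paper leaves implicit.
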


\begin{proof}
We invert the system $ \mathcal{S}(u, \tilde{u}) = (v, \tilde{v})$ with $(u, \tilde{u}), (v, \tilde{v}) \in L^2_{\Flo}( \mathbb{S}^1) \times \mathbb{C}$ by using the orthonormal basis $(e_l)_{l \in \mathbb{Z}}$ and the explicit expression $S^{ \epsilon}_{ \hbar}=g^{ \epsilon}_{ \hbar} \left( \dfrac{\hbar}{i} \dfrac{\partial}{\partial \tilde{\theta}} \right)$ to obtain the expression of $ \hat{\mathcal{E}}$. Then, the estimates \textit{(i)}, \textit{(ii)}, \textit{(iii)} and \textit{(iv)} can be deduced from \textit{1.}, \textit{2.} and \textit{3.} always by using the properties of the basis $(e_l)_{l \in \mathbb{Z}}$. Lastly, the estimate \eqref{esti_grushin_S_h} can be deduced from \textit{(i)}, \textit{(ii)}, \textit{(iii)} and \textit{(iv)}.
\end{proof}

\noindent We now deal with a global Grushin problem for the operator $ \tilde{P}^{ \epsilon}_{ \hbar}-z$.\\
We consider the following operators, for all $(\hbar,k)$ such that $z \in \Omega_k(\hbar)$:
\begin{align*}
R_+: H( \mathbb{S}^1 + i \mathbb{R}, \Phi_2) & \longrightarrow \mathbb{C} ,\\
u & \longmapsto \hat{R}_+ \tilde{U} \Op_{ \Phi_2}(\chi) u := \langle \tilde{U} \Op_{ \Phi_2}(\chi) u | e_k \rangle  
\end{align*}
and:
\begin{align*}
R_-: \mathbb{C} & \longrightarrow H( \mathbb{S}^1 + i \mathbb{R}, \Phi_2), \\
\tilde{u} & \longmapsto \tilde{U}^{-1} \hat{R}_- \tilde{u} := \tilde{u} \tilde{U}^{-1} e_k .
\end{align*}
where $\tilde{U}$ is the operator defined in Proposition \ref{prop_tilde(UPU)=S} such that microlocally :
$$ \tilde{U} \tilde{P}^{ \epsilon}_{ \hbar} \Op_{ \Phi_2}( \chi) = S^{ \epsilon}_{ \hbar} \tilde{U} \Op_{ \Phi_2}( \chi) + \mathcal{O}(\hbar^{ \infty}) ,$$
and where $ \tilde{U}^{-1}$ denote the microlocal inverse of $ \tilde{U}$. \\
First, according to \cite{MR2036816}, notice that we have the following property:
$$ \Op_{ \Phi_2}(\chi) R_- = R_- + \mathcal{O}(\hbar^{ \infty}): \mathbb{C} \longrightarrow H( \mathbb{S}^1 + i \mathbb{R}, \Phi_2) ,$$
up to decreasing the support of the function $ \chi$ if necessary (because the functions $e_k$ and $ \chi$ are localized in the same neighbourhood). \\
We consider the following Grushin problem, for $(u, \tilde{u})$ and $(v, \tilde{v}) \in H( \mathbb{S}^1 + i \mathbb{R}, \Phi_2) \times \mathbb{C}$:
$$
\left\lbrace
\begin{split}
& ( \tilde{P}^{ \epsilon}_{ \hbar}-z) u + R_- \tilde{u} = v, \\
& R_+ u = \tilde{v}.
\end{split}
\right.$$

\begin{prop} \label{prop_pb_grushin_P}
For all $(v, \tilde{v}) \in H( \mathbb{S}^1 + i \mathbb{R}, \Phi_2) \times \mathbb{C}$, this Grushin problem admits a unique solution $(u, \tilde{u}) \in H( \mathbb{S}^1 + i \mathbb{R}, \Phi_2, \tilde{m}) \times \mathbb{C}$ with the following estimate:
\begin{equation} \label{eq_Grushin_P}
 \hbar \| u \| + | \tilde{u} | \leq \mathcal{O}(1) \left( \| v \| + \hbar | \tilde{v} | \right).
\end{equation}
\end{prop}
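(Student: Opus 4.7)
The approach is to transport the Grushin problem for $\tilde{P}^{\epsilon}_{\hbar}-z$ to the model Grushin problem for $S^{\epsilon}_{\hbar}-z$ (Proposition \ref{prop_pb_grushin_S}) via the microlocal unitary $\tilde{U}$, and to handle the complement of a neighbourhood of $\tilde{\Lambda}_E^{\epsilon}$ by an elliptic parametrix. The operators $R_{\pm}$ are chosen precisely to make this transport consistent: $R_+ = \hat{R}_+ \tilde{U}\Op_{\Phi_2}(\chi)$ and $R_- = \tilde{U}^{-1}\hat{R}_-$.

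First, given data $(v,\tilde{v})\in H(\mathbb{S}^1+i\mathbb{R},\Phi_2)\times\mathbb{C}$, I would apply the explicit inverse $\hat{\mathcal{E}}$ of Proposition \ref{prop_pb_grushin_S} to $(\tilde{U}\Op_{\Phi_2}(\chi)v,\tilde{v})$, obtaining $(w,\tilde{u})\in L^2_{\Flo}(\mathbb{S}^1)\times\mathbb{C}$ that already satisfies the bound \eqref{esti_grushin_S_h}. Pull $w$ back by the microlocal inverse of $\tilde{U}$ on the support of $\chi$ and set $u_{\chi}=\tilde{U}^{-1}w$. Using the intertwining $\tilde{U}\tilde{P}^{\epsilon}_{\hbar}\Op_{\Phi_2}(\chi)=S^{\epsilon}_{\hbar}\tilde{U}\Op_{\Phi_2}(\chi)+\mathcal{O}(\hbar^{\infty})$ from Proposition \ref{prop_tilde(UPU)=S}, together with $\Op_{\Phi_2}(\chi)R_-=R_-+\mathcal{O}(\hbar^{\infty})$ (noted in the paragraph preceding the proposition), one checks that $(u_{\chi},\tilde{u})$ solves the system with right member $(\Op_{\Phi_2}(\chi)v+\mathcal{O}(\hbar^{\infty}),\,\tilde{v}+\mathcal{O}(\hbar^{\infty}))$.

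Next, off a neighbourhood of $\tilde{\Lambda}_E^{\epsilon}$ the symbol $\tilde{p}^{\epsilon}_{\hbar}-z$ is elliptic with respect to $\tilde{m}$, as was established between equation \eqref{eq(tilpe(P)-z)chi u} and the beginning of the Grushin subsection. A standard parametrix construction therefore yields an operator $Q$ with $(\tilde{P}^{\epsilon}_{\hbar}-z)\,Q\,\Op_{\Phi_2}(1-\chi)=\Op_{\Phi_2}(1-\chi)+\mathcal{O}(\hbar^{\infty})$ and $\|Q\|=\mathcal{O}(1)$ on the appropriate weighted spaces. Replacing $u_{\chi}$ by $u_{0}=u_{\chi}+Q\,\Op_{\Phi_2}(1-\chi)v$ absorbs the $\Op_{\Phi_2}(1-\chi)v$ residual, and because $Q\Op_{\Phi_2}(1-\chi)$ is microlocally supported away from the support of $\chi$, the modification does not affect $R_+u_0$ beyond $\mathcal{O}(\hbar^{\infty})$. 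This produces an approximate right inverse $\mathcal{E}_0$ to the Grushin matrix with $\mathcal{P}\mathcal{E}_0=I+K$, $K=\mathcal{O}(\hbar^{\infty})$. Since $\tilde{P}^{\epsilon}_{\hbar}-z$ is Fredholm of index $0$ (as already argued above via the global ellipticity of $\tilde{p}^{\epsilon}$) and the rank-one additions $R_{\pm}$ preserve the index, a Neumann series gives the exact inverse $\mathcal{E}=\mathcal{E}_0(I+K)^{-1}$ for $\hbar$ small. The estimate \eqref{eq_Grushin_P} is then inherited from \eqref{esti_grushin_S_h} combined with the uniform boundedness of $\tilde{U}^{\pm 1}$ and $Q$, and is stable under the Neumann correction because $K=\mathcal{O}(\hbar^{\infty})$.

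The main obstacle is the gluing step: verifying that the errors produced when one passes from the microlocal model inverse on the support of $\chi$ to the elliptic parametrix on the support of $1-\chi$ really collapse to $\mathcal{O}(\hbar^{\infty})$. The commutators $[\tilde{P}^{\epsilon}_{\hbar},\Op_{\Phi_2}(\chi)]$ are only $\mathcal{O}(\hbar)$, not $\mathcal{O}(\hbar^{\infty})$; one has to exploit that their symbols are supported in the transition region where $d\chi\neq 0$, which lies strictly away from $\tilde{\Lambda}_E^{\epsilon}$, and there the ellipticity lower bound applies. The same care is needed to show $\tilde{U}\Op_{\Phi_2}(\chi)\tilde{U}^{-1}e_k=e_k+\mathcal{O}(\hbar^{\infty})$ (which is what makes $R_+u_{\chi}=\tilde{v}$ up to negligible terms), and this hinges on $e_k$ being microlocally concentrated on $\{\tilde{I}=\hbar k-J\}\subset \{\chi=1\}$ when $|\hbar k-J-I_0|<1/C$.
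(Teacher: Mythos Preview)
Your approach is correct but proceeds in the reverse direction from the paper. You build an approximate \emph{right inverse} $\mathcal{E}_0$ by transporting the explicit model inverse $\hat{\mathcal{E}}$ through $\tilde{U}^{-1}$ on $\mathrm{supp}\,\chi$, patching with an elliptic parametrix $Q$ on $\mathrm{supp}\,(1-\chi)$, and then closing with a Neumann series plus the Fredholm index argument. The paper instead establishes only the \emph{a priori estimate}: it assumes a solution $(u,\tilde u)$ exists, applies $\Op_{\Phi_2}(\chi)$ and then $\tilde U$ to the first equation, and reads off the Grushin system for $S^{\epsilon}_{\hbar}-z$ with data $(\tilde U\Op_{\Phi_2}(\chi)v+\tilde Uw+\tilde w,\ \tilde v)$, where $w=[\tilde P^{\epsilon}_{\hbar},\Op_{\Phi_2}(\chi)]u$ satisfies \eqref{eq_w}. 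The estimate \eqref{esti_grushin_S_h} then yields \eqref{eq_Grushin_P} directly, and existence is deduced afterwards from injectivity plus Fredholm index $0$.

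The practical difference is that the paper never has to glue: the commutator $w$ appears only as a right-hand side error controlled by \eqref{eq_w} and Proposition~\ref{prop_norme_(1-chi)u}, so there is no need to build the parametrix $Q$ or to check that $R_+Q\Op_{\Phi_2}(1-\chi)=\mathcal{O}(\hbar^{\infty})$, nor to promote $\tilde U^{-1}w$ to a genuine global element of $H(\mathbb{S}^1+i\mathbb{R},\Phi_2,\tilde m)$. Your route gives a more explicit description of the inverse (useful if one later wants to read off $E_{-,+}$ directly from the construction), at the cost of exactly the gluing verifications you flag in your last paragraph; the paper's route is shorter because the a priori estimate needs only the model estimate and the single commutator bound, with bijectivity outsourced to the index argument.
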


\begin{proof}
To prove this result, we are going to modify the Grushin problem for the operator $ \tilde{P}^{ \epsilon}_{ \hbar}-z$ and reduce ourselves to that of the operator $S^{ \epsilon}_{ \hbar}-z$, we will then be able to use Proposition \ref{prop_pb_grushin_S}. \\
Indeed, we start by applying the operator $ \Op_{ \Phi_2}( \chi)$ to the first equation of the Grushin problem for $ \tilde{P}_{ \hbar}^{ \epsilon}-z$:
$$
\left\lbrace
\begin{split}
& \Op_{ \Phi_2}( \chi) ( \tilde{P}^{ \epsilon}_{ \hbar}-z) u + \Op_{ \Phi_2}( \chi) R_- \tilde{u} = \Op_{ \Phi_2}( \chi) v, \\
& R_+ u = \tilde{v}.
\end{split}
\right.$$
Since $ \Op_{ \Phi_2}( \chi) R_- = R_- + \mathcal{O}(h^{ \infty}) := R_- - R_-^{ \infty}$, we have:
$$
\left\lbrace
\begin{split}
& \Op_{ \Phi_2}( \chi) ( \tilde{P}^{ \epsilon}_{ \hbar}-z) u + R_- \tilde{u} = \Op_{ \Phi_2}( \chi) v + R_-^{ \infty} \tilde{u}, \\
& R_+ u = \tilde{v}.
\end{split}
\right.$$
Since:
\begin{align*}
\Op_{ \Phi_2}( \chi) ( \tilde{P}^{ \epsilon}_{ \hbar} -z) u & = \Op_{ \Phi_2}( \chi) \tilde{P}^{ \epsilon}_{ \hbar} u - z \Op_{ \Phi_2}( \chi) u, \\
& = \tilde{P}^{ \epsilon}_{ \hbar} \Op_{ \Phi_2}( \chi) u - [ \tilde{P}^{ \epsilon}_{ \hbar}, \Op_{ \Phi_2}( \chi)] u - z \Op_{ \Phi_2}( \chi) u, \\
& = ( \tilde{P}^{ \epsilon}_{ \hbar} - z) \Op_{ \Phi_2}( \chi) u - [ \tilde{P}^{ \epsilon}_{ \hbar}, \Op_{ \Phi_2} ( \chi)] u,
\end{align*}
then, if $w := [ \tilde{P}^{ \epsilon}_{ \hbar}, \Op_{ \Phi_2} ( \chi)] u$, we have:
$$
\left\lbrace
\begin{split}
&  ( \tilde{P}^{ \epsilon}_{ \hbar}-z) \Op_{ \Phi_2}( \chi) u + R_- \tilde{u} = \Op_{ \Phi_2}( \chi) v + w + R_-^{ \infty} \tilde{u}, \\
& R_+ u = \tilde{v}.
\end{split}
\right.$$
where $w$ satisfies the following estimate:
$$ \| w \| \leq \mathcal{O} \left(\hbar \right) \| v \| + \mathcal{O}(h^{ \infty}) ( \| u \| + | \tilde{u} |) .$$
We apply the operator $ \tilde{U}$ to the first equation:
$$
\left\lbrace
\begin{split}
& \tilde{U} ( \tilde{P}^{ \epsilon}_{ \hbar}-z) \Op_{ \Phi_2}( \chi) u + \tilde{U} R_- \tilde{u} = \tilde{U} \Op_{ \Phi_2}( \chi) v + \tilde{U} w + \tilde{U} R_-^{ \infty} \tilde{u}, \\
& R_+ u = \tilde{v}.
\end{split}
\right.$$
Besides, since $ R_- = \tilde{U}^{-1} \hat{R}_-$ and $ \tilde{U} ( \tilde{P}^{ \epsilon}_{ \hbar} -z) \Op_{ \Phi_2}( \chi) = (S^{ \epsilon}_{ \hbar} -z) \tilde{U} \Op_{ \Phi_2}( \chi) + \mathcal{O}(h^{ \infty})$, the system becomes:
$$
\left\lbrace
\begin{split}
& ( S^{ \epsilon}_{ \hbar}-z) \tilde{U} \Op_{ \Phi_2}( \chi) u + \hat{R}_- \tilde{u} = \tilde{U} \Op_{ \Phi_2}( \chi) v + \tilde{U} w + \tilde{w}, \\
& R_+ u = \tilde{v}.
\end{split}
\right.$$
where $ \tilde{w}$ satisfies $ | \tilde{w} | \leq \mathcal{O}(h^{ \infty})( \| u \| + | \tilde{u} |)$. \\
Moreover by definition $ R_+ = \hat{R}_+ \tilde{U} \Op_{ \Phi_2}( \chi)$, then the system can be written as:
$$
\left\lbrace
\begin{split}
& ( S^{ \epsilon}_{ \hbar}-z) \tilde{U} \Op_{ \Phi_2}( \chi) u + \hat{R}_- \tilde{u} = \tilde{U} \Op_{ \Phi_2}( \chi) v + \tilde{U} w + \tilde{w}, \\
& \hat{R}_+ \tilde{U} \Op_{ \Phi_2}( \chi) u = \tilde{v}.
\end{split}
\right.$$
We recognize the Grushin problem for $S^{ \epsilon}_{ \hbar}-z$ and therefore we deduce our result.\\
The proof of the estimate \eqref{eq_Grushin_P} uses the estimate \eqref{esti_grushin_S_h} and the estimations on the norm of $w$ and $ \tilde{w}$.
\end{proof}

\noindent \noindent Let:
$$
\mathcal{P} = \begin{pmatrix}
\tilde{P}^{ \epsilon}_{ \hbar}-z & R_- \\ 
R_+ & 0
\end{pmatrix} :H(\mathbb{S}^1 + i \mathbb{R}, \Phi_2) \times \mathbb{C} \longrightarrow H( \mathbb{S}^1 + i \mathbb{R}, \Phi_2) \times \mathbb{C} . $$
Then, according to Proposition \ref{prop_pb_grushin_P}, the operator $ \mathcal{P}$ is injective for $z \in \Omega_k( \hbar)$ and because it is a rank-one perturbation of a Fredholm operator of index $0$, we know that the operator $ \mathcal{P}$ is bijective for $z \in \Omega_k( \hbar)$. \\
We denote the inverse of $ \mathcal{P}$ by:
$$
\mathcal{E} = \begin{pmatrix}
E(z) & E_+ \\ 
E_- & E_{-,+}(z)
\end{pmatrix}, $$
and recall that the spectrum of $ \tilde{P}^{ \epsilon}_{ \hbar}$ in $ \Omega_k(\hbar)$ is equal to the set of $z \in \mathbb{C}$ such that $E_{-,+}(z) = 0$. Therefore, we want to determine the component $E_{-,+}(z)$.

\begin{cor} \label{prop_E+_E-+}
The components of the operator $ \mathcal{E}$ are given by:
\begin{enumerate}
\item[1.] $ E_+ = \tilde{U}^{-1} \hat{E}_+ + \mathcal{O}(\hbar^{ \infty})$;
\item[2.] $ E_{-,+}(z) = \hat{E}_{-,+}(z) + \mathcal{O}(\hbar^{ \infty}) = z-g^{ \epsilon}_{ \hbar}(\hbar k-J) + \mathcal{O}(\hbar^{ \infty})$.
\end{enumerate}
\end{cor}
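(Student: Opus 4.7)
The plan is to read off $E_+$ and $E_{-,+}(z)$ as the solution of the Grushin problem for $\tilde P^{\epsilon}_{\hbar} - z$ corresponding to the source data $(v,\tilde v) = (0,1)$, and then to transfer this problem to the model Grushin problem for $S^{\epsilon}_{\hbar} - z$ whose inverse $\hat{\mathcal{E}}$ is known explicitly from Proposition \ref{prop_pb_grushin_S}. Setting $u := E_+$ and $\tilde u := E_{-,+}(z)$, the definition of the inverse $\mathcal{E}$ gives
$$(\tilde P^{\epsilon}_{\hbar} - z)u + R_- \tilde u = 0, \qquad R_+ u = 1,$$
together with the a priori bound $\hbar \|u\| + |\tilde u| \leq \mathcal{O}(1)$ supplied by Proposition \ref{prop_pb_grushin_P}.

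I would then run the reduction that appeared in the proof of Proposition \ref{prop_pb_grushin_P}: apply $\Op_{\Phi_2}(\chi)$ to the first equation, use $\Op_{\Phi_2}(\chi) R_- = R_- - R_-^{\infty}$ with $R_-^{\infty}=\mathcal{O}(\hbar^{\infty})$, commute $\Op_{\Phi_2}(\chi)$ past $\tilde P^{\epsilon}_{\hbar}$ producing the commutator $[\tilde P^{\epsilon}_{\hbar},\Op_{\Phi_2}(\chi)]u$ which is microlocalized off $\tilde\Lambda^{\epsilon}_E$, apply $\tilde U$ and use the intertwining $\tilde U(\tilde P^{\epsilon}_{\hbar}-z)\Op_{\Phi_2}(\chi) = (S^{\epsilon}_{\hbar}-z)\tilde U\Op_{\Phi_2}(\chi) + \mathcal{O}(\hbar^{\infty})$, and finally invoke $R_- = \tilde U^{-1}\hat R_-$ and $R_+ = \hat R_+ \tilde U \Op_{\Phi_2}(\chi)$. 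This rewrites the system as the Grushin problem of Proposition \ref{prop_pb_grushin_S} for $(\tilde U\Op_{\Phi_2}(\chi)u,\tilde u)$ with right-hand side $(\tilde v_1, 1)$ where $\|\tilde v_1\| \leq \mathcal{O}(\hbar^{\infty})(\|u\|+|\tilde u|)$.

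Inverting via $\hat{\mathcal{E}}$ and combining the norm bounds $\hat E = \mathcal{O}(\hbar^{-1})$, $\hat E_\pm = \mathcal{O}(1)$, $\hat E_{-,+} = \mathcal{O}(\hbar)$ with the a priori estimate $\hbar\|u\|+|\tilde u|=\mathcal{O}(1)$, I obtain
\begin{align*}
\tilde U \Op_{\Phi_2}(\chi) E_+ &= \hat E(z)\tilde v_1 + \hat E_+ = \hat E_+ + \mathcal{O}(\hbar^{\infty}), \\
E_{-,+}(z) &= \hat E_-\tilde v_1 + \hat E_{-,+}(z) = \hat E_{-,+}(z) + \mathcal{O}(\hbar^{\infty}),
\end{align*}
and Proposition \ref{prop_pb_grushin_S} gives $\hat E_{-,+}(z) = z - g^{\epsilon}_{\hbar}(\hbar k - J)$, which is assertion 2. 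For assertion 1, it remains to remove the cutoff: since $R_-\tilde u$ is microlocalized on the support of $e_k$ (strictly inside the region where $\chi\equiv 1$) and since outside a small neighbourhood of $\tilde\Lambda^{\epsilon}_E$ the operator $\tilde P^{\epsilon}_{\hbar}-z$ is elliptic (by the bound $|\tilde p^{\epsilon}_{\hbar}-E|>1/C$ established before Proposition \ref{prop_norme_(1-chi)u}), an iteration of Proposition \ref{prop_norme_(1-chi)u} with a nested family of cutoffs applied to $(\tilde P^{\epsilon}_{\hbar}-z)E_+ = -R_-E_{-,+}(z)$ shows $\Op_{\Phi_2}(1-\chi)E_+ = \mathcal{O}(\hbar^{\infty})$. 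Applying the microlocal inverse $\tilde U^{-1}$ to the identity above then yields $E_+ = \tilde U^{-1}\hat E_+ + \mathcal{O}(\hbar^{\infty})$.

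The main obstacle will be the careful bookkeeping of the $\mathcal{O}(\hbar^{\infty})$ error terms through the reduction, and in particular justifying that $E_+$ itself (not merely $\Op_{\Phi_2}(\chi)E_+$) is microlocally concentrated inside the support of $\chi$. The key input is the ellipticity of $\tilde P^{\epsilon}_{\hbar}-z$ off $\tilde\Lambda^{\epsilon}_E$, which allows a parametrix construction in the complement of a small neighbourhood of $\tilde\Lambda^{\epsilon}_E$ and controls $(1-\chi)E_+$ to all orders in $\hbar$; everything else is a direct consequence of Propositions \ref{prop_pb_grushin_S} and \ref{prop_pb_grushin_P}.
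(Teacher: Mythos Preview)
Your argument is correct, but it runs in the opposite direction from the paper's. The paper does not start from the unknown pair $(E_+,E_{-,+}(z))$ and reduce to the model problem; instead it \emph{guesses} the candidate $(\tilde U^{-1}\hat E_+,\hat E_{-,+}(z))$ and checks directly that
\[
(\tilde P^{\epsilon}_{\hbar}-z)\,\tilde U^{-1}\hat E_+ + R_-\hat E_{-,+}(z)=\mathcal{O}(\hbar^{\infty}),\qquad R_+\tilde U^{-1}\hat E_+ = 1+\mathcal{O}(\hbar^{\infty}),
\]
using only $R_\pm = \tilde U^{\mp 1}\hat R_\pm$ (with the cutoff), the intertwining $\tilde U\tilde P^{\epsilon}_{\hbar}=S^{\epsilon}_{\hbar}\tilde U+\mathcal{O}(\hbar^{\infty})$, and $\mathcal{S}\hat{\mathcal{E}}=\mathrm{Id}$. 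The conclusion then follows from the stability estimate \eqref{eq_Grushin_P} of Proposition \ref{prop_pb_grushin_P} applied to the difference.

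The payoff of the paper's route is that the candidate $\tilde U^{-1}\hat E_+=\tilde U^{-1}e_k$ is \emph{automatically} microlocalized inside the region where $\chi\equiv 1$, so no separate argument is needed to control $\Op_{\Phi_2}(1-\chi)E_+$; the whole proof reduces to two one-line verifications. Your route is equally valid and perhaps more systematic (it is exactly how one would compute $\mathcal{E}$ if one did not already know the answer), but it incurs the extra step you correctly flagged as the main obstacle: showing that $E_+$ itself is concentrated near $\tilde\Lambda^{\epsilon}_E$. Your ellipticity/nested-cutoff argument for this is fine, though note that a single application of Proposition \ref{prop_norme_(1-chi)u} only gives $\|\Op_{\Phi_2}(1-\chi)E_+\|\leq \mathcal{O}(1)\|R_-E_{-,+}(z)\|+\mathcal{O}(\hbar^{\infty})\|E_+\|=\mathcal{O}(\hbar)$, so the iteration (or a direct parametrix) really is needed to reach $\mathcal{O}(\hbar^{\infty})$.
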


\begin{proof}
Since $ \mathcal{P} \mathcal{E} = Id$, we have:
$$
\left\lbrace
\begin{split}
& ( \tilde{P}^{ \epsilon}_{ \hbar} -z) E_+ + R_- E_{-,+}(z) =0, \\
& R_+ E_+ = 1.
\end{split}
\right.$$
Therefore, we need to show, that up to $ \mathcal{O}(h^{ \infty})$, we have:
$$
\left\lbrace
\begin{split}
& ( \tilde{P}^{ \epsilon}_{ \hbar} -z) \tilde{U}^{-1} \hat{E}_+ + R_- \hat{E}_{-,+}(z) \equiv 0, \\
& R_+ \tilde{U}^{-1} \hat{E}_+ \equiv 1.
\end{split}
\right.$$
We have:
\begin{align*}
& R_+ \tilde{U}^{-1} \hat{E}_+ \\
&  = \hat{R}_+ \tilde{U} \Op_{ \Phi_2}( \chi) \tilde{U}^{-1} \hat{E}_+ \quad \text{by definition of $ R_+$}, \\
& \equiv 1 \quad \text{by definition of $ \chi$ and because $ \mathcal{S} \hat{\mathcal{E}}=Id$ according to Proposition \ref{prop_pb_grushin_S}}.
\end{align*}
Then, we have:
\begin{align*}
& ( \tilde{P}^{ \epsilon}_{ \hbar} -z) \tilde{U}^{-1} \hat{E}_+ + R_- \hat{E}_{-,+}(z) \\
&  = ( \tilde{P}^{ \epsilon}_{ \hbar} -z) \tilde{U}^{-1} \hat{E}_+ + \tilde{U}^{-1} \hat{R}_- \hat{E}_{-,+}(z) \quad \text{by definition of $R_-$}, \\
& \equiv \tilde{U}^{-1} (S^{ \epsilon}_{ \hbar} -z) \hat{E}_+ + \tilde{U}^{-1} \hat{R}_- \hat{E}_{-,+}(z) \quad \text{because $ \tilde{U} \tilde{P}^{ \epsilon}_{ \hbar} = S^{ \epsilon}_{ \hbar} \tilde{U} + \mathcal{O}(h^{ \infty})$}, \\
& \equiv 0 \quad \text{because $\mathcal{S} \hat{\mathcal{E}}=Id$}.
\end{align*}
\end{proof}

\noindent We can sum up what we have done so far by the following proposition.

\begin{prop} \label{prop_spectre_tildeP}
Let $ \tilde{P}^{ \epsilon}_{ \hbar}:H( \mathbb{S}^1 + i \mathbb{R}, \Phi_2) \longrightarrow H(\mathbb{S}^1 + i \mathbb{R}, \Phi_2)$ be the operator previously defined. 
Then, with the notations of Proposition \ref{prop_tilde(UPU)=S}, we have:
$$ \sigma( \tilde{P}^{ \epsilon}_{ \hbar}) \cap R_{C, \epsilon_0} = \sigma(S^{ \epsilon}_{ \hbar}) \cap R_{C, \epsilon_0} + \mathcal{O}(\hbar^{ \infty}) = \lbrace g^{ \epsilon}_{ \hbar}(\hbar k-J) , k \in \mathbb{Z} \rbrace \cap R_{C, \epsilon_0} + \mathcal{O}( \hbar^{ \infty}) .$$
\end{prop}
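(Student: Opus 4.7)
The plan is to combine the microlocal confinement estimates that have already been set up with the Grushin reduction to read off the spectrum from the effective Hamiltonian $E_{-,+}(z)$.

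First, I would prove the inclusion $\sigma(\tilde{P}^{\epsilon}_{\hbar}) \cap R_{C,\epsilon_0} \subset \bigcup_k \Omega_k(\hbar)$. Suppose $z \in R_{C,\epsilon_0} \setminus \bigcup_k \Omega_k(\hbar)$ and $( \tilde{P}^{\epsilon}_{\hbar} - z) u = 0$ for some $u \in H(\mathbb{S}^1 + i\mathbb{R}, \Phi_2, \tilde{m})$. Splitting $u = \Op_{\Phi_2}(\chi) u + \Op_{\Phi_2}(1-\chi) u$ and applying Propositions \ref{prop_norme_(1-chi)u} and \ref{prop_norme_chiu} with $v = 0$, I get $\|u\| \le \mathcal{O}(\hbar^{\infty}) \|u\|$, so $u = 0$. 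Combined with the Fredholm index $0$ property coming from ellipticity of $\tilde{p}^{\epsilon}$, this forces $\tilde{P}^{\epsilon}_{\hbar} - z$ to be bijective on the weighted spaces, so $z \notin \sigma(\tilde{P}^{\epsilon}_{\hbar})$.

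Next, inside a disc $\Omega_k(\hbar)$ with $z \in \Omega_k(\hbar)$, I would invoke the Grushin problem of Proposition \ref{prop_pb_grushin_P}. Since the associated matrix $\mathcal{P}$ is a rank-one perturbation of a Fredholm operator of index $0$ and is injective by the a priori estimate \eqref{eq_Grushin_P}, it is invertible. By the standard Schur-complement identity, $z$ is an eigenvalue of $\tilde{P}^{\epsilon}_{\hbar}$ if and only if $E_{-,+}(z) = 0$. Corollary \ref{prop_E+_E-+} then gives $E_{-,+}(z) = z - g^{\epsilon}_{\hbar}(\hbar k - J) + \mathcal{O}(\hbar^{\infty})$, so any zero $z \in \Omega_k(\hbar)$ satisfies $z = g^{\epsilon}_{\hbar}(\hbar k - J) + \mathcal{O}(\hbar^{\infty})$. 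This proves one inclusion in the conclusion.

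For the reverse inclusion, I would use a Rouché-type argument disc-by-disc. Fix $k$ such that $g^{\epsilon}_{\hbar}(\hbar k - J) \in R_{C,\epsilon_0}$ and look at $E_{-,+}(z)$ on the boundary $\partial \Omega_k(\hbar)$. There the leading part $z - g^{\epsilon}_{\hbar}(\hbar k - J)$ has modulus $\delta_{\hbar} > \hbar/\mathcal{O}(1)$, which dominates the $\mathcal{O}(\hbar^{\infty})$ remainder. By Rouché's theorem applied to the holomorphic function $E_{-,+}$, there is exactly one zero inside $\Omega_k(\hbar)$, namely an eigenvalue of $\tilde{P}^{\epsilon}_{\hbar}$ lying at distance $\mathcal{O}(\hbar^{\infty})$ from $g^{\epsilon}_{\hbar}(\hbar k - J)$. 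The resulting bijection between quasi-eigenvalues in $R_{C,\epsilon_0}$ and true eigenvalues (up to $\mathcal{O}(\hbar^{\infty})$) yields the claimed equality, with the second equality being just the content of Proposition \ref{spectre_de_S}.

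The main subtlety I expect is justifying that $E_{-,+}(z)$ depends holomorphically on $z$ uniformly in $\hbar$ in a way that lets Rouché's theorem be applied on the small discs of radius $\delta_{\hbar}$; this requires the $\mathcal{O}(\hbar^{\infty})$ remainders from Corollary \ref{prop_E+_E-+} to be genuinely holomorphic in $z$ with uniform estimates, which follows from the fact that all the microlocal inverses and the operator $\tilde{U}$ are constructed independently of $z$ so that only the explicit factor $z$ in $S^{\epsilon}_{\hbar} - z$ carries the $z$-dependence of the leading term.
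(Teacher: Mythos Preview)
Your proposal is correct and follows the same overall architecture as the paper: the confinement of the spectrum to the discs $\Omega_k(\hbar)$ via Propositions \ref{prop_norme_(1-chi)u} and \ref{prop_norme_chiu} together with the Fredholm index $0$ argument, then the Grushin reduction and Corollary \ref{prop_E+_E-+} to identify the effective Hamiltonian $E_{-,+}(z)=z-g^{\epsilon}_{\hbar}(\hbar k-J)+\mathcal{O}(\hbar^{\infty})$.

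The one place you are more explicit than the paper is the reverse inclusion. The paper's proof simply states that $E_{-,+}(z)=0$ is equivalent to $z=g^{\epsilon}_{\hbar}(\hbar k-J)+\mathcal{O}(\hbar^{\infty})$, leaving the existence of a zero in each disc implicit. Your Rouch\'e argument on $\partial\Omega_k(\hbar)$, comparing the leading term of modulus $\delta_{\hbar}>\hbar/\mathcal{O}(1)$ with the $\mathcal{O}(\hbar^{\infty})$ remainder, is the standard way to make this step rigorous and also yields simplicity of each eigenvalue in the disc; your remark that the $z$-dependence enters only through the explicit factor in $S^{\epsilon}_{\hbar}-z$ (the operators $\tilde{U}$, $\Op_{\Phi_2}(\chi)$, $R_{\pm}$ being $z$-independent) is exactly what guarantees the holomorphy needed for Rouch\'e. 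So your argument is essentially the paper's, with the existence step filled in.
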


\begin{proof}
By definition of the spectrum, we know that $z \in \sigma( \tilde{P}^{ \epsilon}_{ \hbar})$ if and only if $ \tilde{P}^{ \epsilon}_{ \hbar}-z$ is non-invertible, \textit{i.e.} $ E_{-,+}(z) = 0$, \textit{i.e.} $z = g^{ \epsilon}_{ \hbar}(\hbar k-J) + \mathcal{O}(\hbar^{ \infty})$ (because $ \tilde{P}^{ \epsilon}_{ \hbar}-z$ is invertible if and only if $E_{-,+}$ is invertible too, \textit{i.e.} if and only if $E_{-,+}(z) \neq 0$).
\end{proof}

\noindent Now, we can conclude and determine the spectrum of the operator $P^{ \epsilon}_{ \hbar}$ by using Propositions \ref{prop_spectre_tildeP} and \ref{prop_spectre_P_tildeP}. This ends the proof of  Theorem \ref{theo_cas_simple}.

\subsection{Proof of Theorem \ref{theo_result}}
To prove Theorem \ref{theo_result}, we are going to make a link with the $L^2( \mathbb{S}^1)$-case, then we will apply techniques developed in the proof of Theorem \ref{theo_cas_simple}. \\
We consider the pseudo-differential operator $P^{ \epsilon}_{ \hbar}$ acting on $L^2( \mathbb{R})$ and depending smoothly on $ \epsilon$ of the form:
$$ P^{ \epsilon}_{ \hbar}(x, \hbar D_x) = F^{ \epsilon}_{ \hbar}(x, \hbar D_x) + i \epsilon Q^{ \epsilon}_{ \hbar}(x, \hbar D_x) ,$$
satisfying the hypotheses (A) to (E) (which were defined in the introduction). \\
We want to obtain Bohr-Sommerfeld quantization conditions for this operator by using Theorem \ref{theo_cas_simple}. Therefore, we are looking for a real canonical transformation of the form:
\begin{align*}
\hat{\kappa}: \Vois(f^{ \epsilon}=cst, \mathbb{R}^2) & \longrightarrow \Vois(I=cst, \mathbb{S}^1 \times \mathbb{R}), \\
(x,\xi) & \longmapsto ( \theta, I),
\end{align*}
and such that:
$$ f^{ \epsilon} \circ \hat{\kappa}^{-1}( \theta,I) = \hat{f}^{ \epsilon} (I),$$
where $ \hat{f}^{ \epsilon}$ is an analytic function depending smoothly on $ \epsilon$. \\
To construct such a canonical transformation $ \hat{\kappa}$, we are going to use the action-angle coordinates theorem. \\
\noindent Let $E_0 \in \mathbb{R}$, we consider:
$$ \Lambda_{E_0}^{ \epsilon} = \lbrace (x, \xi) \in \mathbb{R}^2, f^{ \epsilon}(x, \xi) = E_0 \rbrace ;$$
recall that $ \Lambda_{E_0}$ is compact, connected and regular. \\
Let $ \hat{\gamma}_{E_0}$ be a loop generating $ \pi_1( \Lambda_{E_0}^{ \epsilon})$ and let:
$$ I(E_0) = \dfrac{1}{2\pi} \int_{ \hat{\gamma}_{E_0}} \xi dx. $$
Then, by applying the action-angle coordinates theorem with the parameter $ \epsilon$, we know that there exists a symplectomorphism:
\begin{align*}
\hat{\kappa}: \Vois(f^{ \epsilon}=E_0, \mathbb{R}^2) & \longrightarrow \Vois( I=cst, \mathbb{S}^1 \times \mathbb{R}), \\
(x,  \xi) & \longmapsto ( \theta, I);
\end{align*}
such that:
$$ f^{ \epsilon} \circ \hat{\kappa}^{-1}(\theta, I) = \hat{f}^{ \epsilon}(I) .$$
The canonical transformation $ \hat{\kappa}$ transforms the principal symbol $p^{ \epsilon}(x, \xi)$ to a principal symbol of the form:
$$ p^{ \epsilon} \circ \hat{\kappa}^{-1}( \theta, I) = \hat{f}^{ \epsilon}(I) + i \epsilon \hat{q}^{ \epsilon}( \theta, I) ;$$
where $ \hat{q}^{ \epsilon}( \theta, I) = q^{ \epsilon} \circ \hat{\kappa}^{-1}( \theta, I)$. Therefore, we reduce our problem to the study of a principal symbol on $ \mathbb{S}^1 \times \mathbb{R}$ of the form used in Theorem \ref{theo_cas_simple}.\\

\noindent Moreover, we can choose the transformation $ \hat{\kappa}$ such that for any loop $ \hat{\gamma}$, we have:
$$ \int_{ \hat{\gamma}} \hat{\kappa}^* I d \theta - \xi dx = 0.$$
Indeed, since $ \hat{\kappa}$ is a canonical transformation then:
$$ \hat{\kappa}^* (dI \wedge d \theta) = d \xi \wedge dx .$$
Consequently, the $1$-form $ \hat{\kappa}^* (I d \theta) - \xi dx $ is closed and by Stokes theorem, we obtain that the following integral over a loop $ \hat{\gamma}$:
$$ \int_{ \hat{\gamma}} \hat{\kappa}^* (I d \theta) - \xi dx ,$$
depends only on the homotopy class of $ \hat{\gamma}$, then there exists a real constant $c_{ \hat{\gamma}}( \hat{\kappa})$ such that:
$$ \int_{ \hat{\gamma}} \hat{\kappa}^* (I d \theta) - \xi dx = c_{ \hat{\gamma}}( \hat{\kappa}) ,$$
and we can choose this constant equals to zero (up to change the transformation $ \hat{\kappa}$ if necessary). \\
Besides, we can extend the real canonical transformation $\hat{\kappa}$ into a complex canonical transformation such that:
$$ p^{ \epsilon} \circ \hat{\kappa}^{-1} ( \theta, I) = \hat{f}^{ \epsilon}(I) + i \epsilon \hat{q}^{ \epsilon}( \theta, I), \quad \text{for $( \theta, I)$ complex coordinates}. $$
Consequently, for $ \hat{\gamma}$ a complex loop, the following relation is always true:
$$ \int_{ \hat{\gamma}} \hat{\kappa}^* (I d \theta) - \xi dx = 0 .$$ 
Let, for $C>0$ a constant and for $ \epsilon_0$ a sufficiently small fixed real number:
$$ E \in \left\lbrace z \in \mathbb{C}, | \Re(z) - E_0 | < \dfrac{1}{C}, | \Im(z)| < \dfrac{\epsilon_0}{C} \right\rbrace .$$ 
We can consider a loop $ \gamma_E$ in:
$$ \hat{\Lambda}_E^{ \epsilon} = \lbrace (\theta, I) \in \tub( \mathbb{S}^1 \times \mathbb{R}), \hat{p}^{ \epsilon}(\theta, I) = p^{ \epsilon} \circ \hat{\kappa}^{-1}( \theta, I) = E \rbrace .$$
Thus the loop $ \hat{\kappa}^* \gamma_E := \hat{\gamma}_E$ is included in:
$$ \Lambda_E^{ \epsilon} = \lbrace (x, \xi) \in \tub(\mathbb{R}^2), p^{ \epsilon}(x, \xi) =E \rbrace .$$
And the following action integral is well-defined:
$$ \dfrac{1}{2 \pi} \int_{ \hat{\gamma}_E} \xi dx  = \dfrac{1}{2 \pi} \int_{ \gamma_E} I d \theta .$$
This explains why we can express the first term in the asymptotic expansion of eigenvalues of the operator $P_{ \hbar}^{ \epsilon}$ in terms of the action integral $ \int \xi dx $. \\

\noindent We want to quantize the complex canonical transformation $ \hat{\kappa}$ of the form:
\begin{align*}
\hat{\kappa} : \Vois( \Lambda_E^{ \epsilon}, \tub( \mathbb{R}^2)) & \longrightarrow \Vois( \hat{\Lambda}_E^{ \epsilon}, \tub( \mathbb{S}^1 \times \mathbb{R})), \\
(x, \xi) & \longmapsto ( \theta, I),
\end{align*}
where $( x, \xi)$ and $( \theta, I)$ denotes the complex coordinates. However, according to the proof of Theorem \ref{theo_cas_simple}, we know that there exists a complex canonical transformation:
\begin{align*}
\kappa : \Vois( \hat{\Lambda}_E^{ \epsilon}, \tub( \mathbb{S}^1 \times \mathbb{R})) & \longrightarrow \Vois( \tilde{I}=cst,(T^* \mathbb{S}^1)_{ \mathbb{C}}), \\
( \theta, I) & \longmapsto ( \tilde{\theta}, \tilde{I});
\end{align*} 
such that:
$$ \hat{p}^{ \epsilon} \circ \kappa^{-1}( \tilde{\theta}, \tilde{I}) = g^{ \epsilon}( \tilde{I}) .$$
 Consequently, instead of quantizing the transformation $ \hat{ \kappa}$, we can directly quantize the canonical transformation $ \kappa \circ \hat{\kappa}$. To do so, we follow the same steps as in the proof of Theorem \ref{theo_cas_simple}, thus we consider the following commutative diagram on the phase spaces:
\begin{displaymath}
\xymatrix{
\mathbb{R}^2 \subset \mathbb{C}^2 \ar[r]^{\kappa \circ \hat{\kappa}} \ar[d]_{ \kappa_{ \phi_1}} &
(T^* \mathbb{S}^1)_{ \mathbb{C}} \supset \mathbb{S}^1 \times \mathbb{R} \ar[d]^{ \overline{\kappa}_{ \phi_1}}\\
\overset{ \Lambda_{ \Phi_1} \subset \mathbb{C}^2}{ \Lambda_{ \Phi_2} \subset  \mathbb{C}^2}  & (T^* \mathbb{S}^1)_{\mathbb{C}} \supset \Lambda_{ \Phi_1} \ar[l]_{ \tilde{\kappa}^{-1}} } 
\end{displaymath}

\noindent We quantize the transformations as done previously and obtain the following diagram (with the notation of Propositions \ref{prop_H_Phi}, \ref{prop_quantif_A} and \ref{prop_B*_Egorov}):
\begin{displaymath}
    \xymatrix{
        L^2( \mathbb{R}) \ar[r]^{U_0} \ar[d]_{T_{ \phi_1}} & L^2_{\Flo}( \mathbb{S}^1) \ar[d]^{B^*} \\
        \overset{H(\mathbb{C}, \Phi_1)}{H( \pi(U), \Phi_2)} \ar[r]_{A}       & H_{\Flo}( \pi(V), \Phi_1) }
\end{displaymath}

\noindent \texttt{Notation:} $ \tilde{P}^{ \epsilon}_{ \hbar} = T_{ \phi_1} \circ P^{ \epsilon}_{ \hbar} \circ T_{ \phi_1}^*: H( \mathbb{C}, \Phi_2) \longrightarrow H( \mathbb{C}, \Phi_2)$.
\\

\noindent We can sum up what we have done so far by the following proposition. The proof of this result uses the same iterative procedure as in Proposition \ref{prop_tilde(UPU)=S}. 

\begin{prop}
There exists a unitary operator $ \tilde{U}: H( \pi(U), \Phi_2) \longrightarrow L^2_{\Flo}( \mathbb{S}^1)$ such that microlocally:
$$ \tilde{U} \tilde{P}^{ \epsilon}_{ \hbar} = g^{ \epsilon}_{ \hbar} \left( \dfrac{\hbar}{i}  \dfrac{\partial}{\partial \tilde{\theta}} \right) \tilde{U} + \mathcal{O}(\hbar^{ \infty}),$$
where $ \pi(U)$ is a suitable neighbourhood as in Proposition \ref{prop_tilde(UPU)=S} and where $g^{ \epsilon}_{ \hbar}$ is an analytic function admitting an asymptotic expansion in powers of $\hbar$, depending smoothly on $ \epsilon$ and such that its first term $g^{ \epsilon}_0$ is the inverse of the action integral $  \frac{1}{2 \pi} \int_{ \hat{\gamma}_E} \xi dx$.
\end{prop}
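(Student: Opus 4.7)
The plan is to reduce the $L^2(\mathbb{R})$-situation to the $L^2(\mathbb{S}^1)$-situation already handled in Proposition \ref{prop_tilde(UPU)=S}, by prepending a real action-angle symplectomorphism. Using Assumption (E), the action-angle coordinates theorem (with parameter $\epsilon$) furnishes a real symplectomorphism $\hat{\kappa}:\Vois(\Lambda_{E_0},\mathbb{R}^2)\longrightarrow \Vois(\{I=I_0\},\mathbb{S}^1\times\mathbb{R})$ such that $f^{\epsilon}\circ\hat{\kappa}^{-1}(\theta,I)=\hat{f}^{\epsilon}(I)$, and the full principal symbol becomes $\hat{p}^{\epsilon}(\theta,I)=\hat{f}^{\epsilon}(I)+i\epsilon\,\hat{q}^{\epsilon}(\theta,I)$, which is exactly the form treated in Theorem \ref{theo_cas_simple}. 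Since $p^{\epsilon}$ is holomorphic by Assumption (A) and $\partial_I\hat{f}^{\epsilon}\neq 0$ for small $\epsilon$, I extend $\hat{\kappa}$ holomorphically to a neighbourhood of $\Lambda_E^{\epsilon}$. Then I compose with the canonical transformation $\kappa$ produced by Proposition \ref{prop_kappa} applied to $\hat{p}^{\epsilon}$, obtaining a complex symplectomorphism $\kappa\circ\hat{\kappa}$ that sends $\Lambda_E^{\epsilon}\subset\tub(\mathbb{R}^2)$ to $\{\tilde{I}=\text{cst}\}$ and satisfies $p^{\epsilon}\circ(\kappa\circ\hat{\kappa})^{-1}(\tilde{\theta},\tilde{I})=g^{\epsilon}(\tilde{I})$. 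The normalization $\int_{\hat{\gamma}}\hat{\kappa}^{*}(Id\theta)-\xi dx=0$ guarantees that $g^{\epsilon}$ inverts the original action integral $\frac{1}{2\pi}\int_{\hat{\gamma}_E}\xi dx$, as required.

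Next I quantize the composite transformation following the commutative diagram in the text. On the source side I use the FBI transform $T_{\phi_1}:L^2(\mathbb{R})\to H(\mathbb{C},\Phi_1)$ (Proposition \ref{prop_Tphi}) in place of $B^{*}$; this conjugates $P_{\hbar}^{\epsilon}$ into an operator $\tilde{P}_{\hbar}^{\epsilon}$ acting on $H(\mathbb{C},\Phi_2)$, where $\Phi_2$ is the weight close to $\Phi_1$ produced by Proposition \ref{prop_tilde(kappa)_envoie_Phi2_sur_Phi1} applied to the composite transformation (so that the Egorov-type Proposition \ref{prop_H_Phi} and its $\mathbb{S}^1$-analogue Proposition \ref{prop_B*_Egorov_2} apply uniformly in $\hbar$ and $\epsilon$). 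On the target side I use $B$ from Proposition \ref{prop_B_unitaire}. The "complex part" of the transformation is quantized by the operator $A$ of Proposition \ref{prop_quantif_A}, whose Egorov theorem intertwines the Bargmann model of $\hat{p}^{\epsilon}$ with a Toeplitz operator of symbol $g^{\epsilon}$. The composition $\tilde{U}_0:=B\,A\,T_{\phi_1}^{*}$, read as a map $H(\pi(U),\Phi_2)\longrightarrow L^2_{\Flo}(\mathbb{S}^1)$, is microlocally unitary and satisfies the leading-order intertwining
$$\tilde{U}_0\,\tilde{P}_{\hbar}^{\epsilon}=g^{\epsilon}\!\left(\frac{\hbar}{i}\frac{\partial}{\partial\tilde{\theta}}\right)\tilde{U}_0+\mathcal{O}(\hbar),$$
by composing the three Egorov statements; the Floquet index $J$ in the definition of $L^2_{\Flo}(\mathbb{S}^1)$ is exactly the real constant produced by the action difference along a loop in $\mathrm{graph}(\tilde{\kappa})$ for the full composite transformation.

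Finally, I upgrade the leading-order intertwining to $\mathcal{O}(\hbar^{\infty})$ via the cohomological iteration already carried out in Proposition \ref{prop_tilde(UPU)=S}: at each step I look for a unitary correction $V=\mathrm{Op}(e^{ia})$ such that $\{a,g^{\epsilon}(\tilde{I})\}=s_1-r_1$, and solve this by setting $s_1=\frac{1}{2\pi}\int r_1\,d\tilde{\theta}$ and $\partial_{\tilde{\theta}}a=\bigl(dg^{\epsilon}/d\tilde{I}\bigr)^{-1}(s_1-r_1)$, which is well-posed because $g^{\epsilon}$ depends only on $\tilde{I}$ and $dg^{\epsilon}/d\tilde{I}\neq 0$ (by non-degeneracy of the action). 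Iterating and taking a Borel resummation yields the analytic function $g^{\epsilon}_{\hbar}\sim\sum g^{\epsilon}_j\hbar^j$ with $g^{\epsilon}_0=g^{\epsilon}$, along with the refined unitary $\tilde{U}$. The most delicate step to verify is the second one: checking that $\Phi_2$ stays uniformly close to $\Phi_1$ in the sense of Proposition \ref{prop_B*_Egorov_2} when $\epsilon$ varies, and tracking the Floquet index through the composition $\kappa\circ\hat{\kappa}$, so that all Egorov estimates remain uniform in $\hbar<1$ and $\epsilon<\epsilon_0$; once this bookkeeping is in place the rest is a direct transcription of the $\mathbb{S}^1$-case.
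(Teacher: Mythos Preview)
Your proposal is correct and follows essentially the same route as the paper: reduce to the $\mathbb{S}^1$-case via the real action-angle map $\hat{\kappa}$, extend holomorphically, compose with $\kappa$, quantize the composite $\kappa\circ\hat{\kappa}$ through the $T_{\phi_1}/A/B$ diagram, and then run the iterative procedure of Proposition \ref{prop_tilde(UPU)=S}. One minor slip: since $\tilde{P}^{\epsilon}_{\hbar}=T_{\phi_1}P^{\epsilon}_{\hbar}T_{\phi_1}^{*}$ already acts on $H(\mathbb{C},\Phi_2)$, the intertwiner on the Bargmann side should be $\tilde{U}_0=BA$ (not $BA\,T_{\phi_1}^{*}$), so that its domain is indeed $H(\pi(U),\Phi_2)$ as stated.
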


In order to determine the spectrum of the operator $P^{ \epsilon}_{ \hbar}$, we use the same arguments as in the proof of Theorem \ref{theo_cas_simple} and therefore two Grushin problems, one for the operator $S^{ \epsilon}_{ \hbar}-z$ and the other one for the operator $\tilde{P}^{ \epsilon}_{ \hbar}-z$.

\section{Application to $ \mathcal{P} \mathcal{T}$-symmetric pseudo-differential operators}
$ \mathcal{P} \mathcal{T}$-symmetric operators are used as an alternative to selfadjoint operators in quantum mechanics and an interesting question about any such operator is whether or not its spectrum is real (see \cite{Bender}). In the case of perturbations of pseudo-differential operators, Naima Boussekkine and Nawal Mecherout proved in \cite{BM} that $ \mathcal{P} \mathcal{T}$-symmetric perturbation of a semi-classical Schrödinger operator with a real-valued single well potential have real spectrum. Then Naima Boussekkine, Nawal Mecherout, Thierry Ramond and Johannes Sj{\"o}strand proved in \cite{BMRS} that in the case of a double well potential for an exponentially small perturbation of Schrödinger operator, this operator also has real spectrum. They also showed that for non-small perturbations of Schrödinger operator, the spectrum can become complex. \\

First, recall the definition of a $ \mathcal{P} \mathcal{T}$-symmetric operator (see for example \cite{BM} or \cite{BMRS}): we denote by $ \mathcal{P}$ the parity operator and by $ \mathcal{T}$ the time-reversal operator defined by:
\begin{align*}
\mathcal{P}: L^2( \mathbb{R}) & \longrightarrow L^2( \mathbb{R}) \quad & \mathcal{T}: L^2( \mathbb{R}) & \longrightarrow L^2( \mathbb{R}), \\
 u(x) & \longmapsto u(-x) \quad & u(x) & \longmapsto \overline{u(x)}.
\end{align*}
Let $P_{ \hbar}^{ \epsilon}$ be a pseudo-differential operator acting on $L^2( \mathbb{R})$ and depending smoothly on $ \epsilon$.

\begin{defi}
We said that the pseudo-differential operator $ P_{ \hbar}^{ \epsilon}$ is $ \mathcal{P} \mathcal{T}$-symmetric if $ [ P_{ \hbar}^{ \epsilon}, \mathcal{P} \mathcal{T} ] = 0 $.
\end{defi}

\begin{theoa}
Let $P^{ \epsilon}_{ \hbar}$ be a pseudo-differential operator depending smoothly on a small parameter $ \epsilon$, acting on $L^2( \mathbb{R})$ and let $E_0 \in \mathbb{R}$ such that they satisfy the assumptions (A) to (E), consequently the operator $P_{ \hbar}^{ \epsilon}$ is of the form:
$$ P^{ \epsilon}_{ \hbar}(x, \hbar D_x) = F^{ \epsilon}_{ \hbar}(x, \hbar D_x) + i \epsilon Q^{ \epsilon}_{ \hbar}(x, \hbar D_x) .$$
Moreover, we assume that $P_{ \hbar}^{ \epsilon}$ is $ \mathcal{P} \mathcal{T}$-symmetric. Let, for $ \epsilon_0>0$ a sufficiently small fixed real number:
$$ R_{C, \epsilon_0} = \left\lbrace z \in \mathbb{C}; | \Re (z) -E_0| <  \dfrac{1}{C}, | \Im(z) | < \dfrac{\epsilon_0}{C} \right\rbrace \quad \text{where $C > 0$ is a constant} .$$
Then the spectrum of the operator $P^{ \epsilon}_{ \hbar}$ in the rectangle $R_{C, \epsilon_0}$ is real for $0 \leq \epsilon < \epsilon_0$. \\
Besides, the spectrum $ \sigma(P_{ \hbar}^{ \epsilon})$ in the rectangle $R_{C, \epsilon_0}$ is given by Theorem \ref{theo_result}, thus for $0 \leq \epsilon < \epsilon_0$, we have:
$$ \sigma(P^{ \epsilon}_{ \hbar}) \cap R_{C, \epsilon_0} = \lbrace g^{ \epsilon}_{ \hbar}(\hbar k) , k \in \mathbb{Z} \rbrace \cap R_{C, \epsilon_0} + \mathcal{O}(\hbar^{ \infty}), $$
where $g^{ \epsilon}_{ \hbar}$ is an analytic function admitting an asymptotic expansion in powers of $\hbar$, depending smoothly on $ \epsilon$ and such that its first term $g_0^{ \epsilon}$ in the asymptotic expansion is the inverse of the action coordinate $ \frac{1}{2 \pi} \int_{ \gamma_E} \xi dx$.
\end{theoa}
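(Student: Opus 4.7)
The second assertion is nothing more than an application of Theorem~\ref{theo_result}, since the pseudo-differential operator $P^{\epsilon}_{\hbar}$ is assumed to satisfy all of (A)--(E); so the content of the proof lies entirely in establishing the reality of $\sigma(P^{\epsilon}_{\hbar}) \cap R_{C, \epsilon_0}$. My plan is to combine the localization of eigenvalues in the small discs $\Omega_k(\hbar)$, which comes out of the Grushin problem in the proof of Theorem~\ref{theo_result}, with the obvious conjugation symmetry of the spectrum coming from $\mathcal{PT}$-symmetry.

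First I would translate the relation $[P^{\epsilon}_{\hbar}, \mathcal{PT}] = 0$ into a symbol-level identity. A direct computation in the Weyl calculus shows that $\mathcal{P} \mathrm{Op}^W(p) \mathcal{P} = \mathrm{Op}^W(p(-x,-\xi))$ and $\mathcal{T} \mathrm{Op}^W(p) \mathcal{T} = \mathrm{Op}^W(\overline{p(x,-\xi)})$, so $\mathcal{PT}$-symmetry amounts to $p^{\epsilon}_{\hbar}(x,\xi) = \overline{p^{\epsilon}_{\hbar}(-x,\xi)}$. In particular, under assumption (D), $f^{\epsilon}$ is even and $q^{\epsilon}$ is odd in $x$. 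More importantly, since $\mathcal{PT}$ is an antilinear involution that commutes with $P^{\epsilon}_{\hbar}$, one immediately obtains $\sigma(P^{\epsilon}_{\hbar}) = \overline{\sigma(P^{\epsilon}_{\hbar})}$: if $P^{\epsilon}_{\hbar} u = \lambda u$, then $P^{\epsilon}_{\hbar}(\mathcal{PT}u) = \bar{\lambda}\,\mathcal{PT}u$.

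Next I would invoke Theorem~\ref{theo_result}: every element of $\sigma(P^{\epsilon}_{\hbar}) \cap R_{C, \epsilon_0}$ is $\mathcal{O}(\hbar^{\infty})$-close to some quasi-eigenvalue $g^{\epsilon}_{\hbar}(\hbar k)$, and inspection of the Grushin problem (Propositions~\ref{prop_pb_grushin_P} and~\ref{prop_spectre_tildeP}) shows that each disc $\Omega_k(\hbar)$ contains exactly one eigenvalue of $P^{\epsilon}_{\hbar}$ (counted with algebraic multiplicity). I would then show that the quasi-eigenvalues themselves are real modulo $\mathcal{O}(\hbar^{\infty})$. At $\epsilon = 0$ this is immediate since $P^{0}_{\hbar}$ is selfadjoint and $g^{0}_{\hbar}$ delivers its spectrum near $E_0$; for $\epsilon > 0$ one can argue either by smooth dependence of $g^{\epsilon}_{\hbar}$ on $\epsilon$ together with the conjugation-symmetry of the spectrum which forces $\overline{g^{\epsilon}_{\hbar}(\hbar k)} = g^{\epsilon}_{\hbar}(\hbar k) + \mathcal{O}(\hbar^{\infty})$, or more directly by propagating the $\mathcal{PT}$-symmetry through the canonical transformations $\hat{\kappa}$ and $\kappa$ (both equivariant with respect to the antiholomorphic involution $(x,\xi) \mapsto (-\bar x, \bar \xi)$) and deducing that $g^{\epsilon}_{0}$, the inverse of $E \mapsto \frac{1}{2\pi}\int_{\gamma_E}\xi\,dx$, is real on the real axis.

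With these ingredients in hand the conclusion is quick: for each admissible $k$ let $\lambda_k$ be the unique eigenvalue in $\Omega_k(\hbar)$; then $\overline{\lambda_k}$ is also an eigenvalue of $P^{\epsilon}_{\hbar}$, and since $\lambda_k = g^{\epsilon}_{\hbar}(\hbar k) + \mathcal{O}(\hbar^{\infty})$ with $g^{\epsilon}_{\hbar}(\hbar k) \in \mathbb{R} + \mathcal{O}(\hbar^{\infty})$, the conjugate $\overline{\lambda_k}$ still lies inside $\Omega_k(\hbar)$; uniqueness forces $\lambda_k = \overline{\lambda_k}$, i.e.\ $\lambda_k \in \mathbb{R}$. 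The main obstacle I anticipate is the step showing that the $g^{\epsilon}_{\hbar}(\hbar k)$ are themselves real (and not merely that eigenvalues pair with their conjugates): carrying this out cleanly requires checking that the $\mathcal{PT}$-symmetry of the original symbol survives the sequence of conjugations by $T_{\phi_1}$, $A$, $B$ used in the proof of Theorem~\ref{theo_cas_simple}, and is inherited by the normal form $g^{\epsilon}_{\hbar}(\hbar D_{\tilde\theta}/i)$ on $L^2_{\Flo}(\mathbb{S}^1)$, which constrains the symbol $g^{\epsilon}_{\hbar}$ to take real values on $\mathbb{R}$.
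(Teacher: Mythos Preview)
Your proposal is correct and follows essentially the same two ingredients as the paper: the conjugation symmetry $\sigma(P^{\epsilon}_{\hbar})=\overline{\sigma(P^{\epsilon}_{\hbar})}$ coming from $\mathcal{PT}$-symmetry, and the spectral description of Theorem~\ref{theo_result}. The one difference is that you insert an intermediate step the paper does not need. You worry about proving that the quasi-eigenvalues $g^{\epsilon}_{\hbar}(\hbar k)$ are themselves real (so that $\overline{\lambda_k}$ stays in the same disc $\Omega_k(\hbar)$), and you flag this as the main obstacle. The paper sidesteps this entirely: from Theorem~\ref{theo_result} the eigenvalues lie on a single curve with \emph{real parts} separated by $\geq \hbar/\mathcal{O}(1)$ (since $g^{\epsilon}_{0}$ is a small perturbation of the real local diffeomorphism $g^{0}_{0}=(f^{0})^{-1}$), so an eigenvalue $\lambda$ and its conjugate $\overline{\lambda}$, having identical real parts, must coincide. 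No reality of $g^{\epsilon}_{\hbar}(\hbar k)$, and no tracking of $\mathcal{PT}$ through the chain $T_{\phi_1},A,B$, is required.
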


\begin{proof}
According to Theorem \ref{theo_result}, we know that the spectrum of the operator $P_{ \hbar}^{ \epsilon}$ in the rectangle $R_{C, \epsilon_0}$ is given by:
$$ \sigma(P^{ \epsilon}_{ \hbar}) \cap R_{C, \epsilon_0} = \lbrace g^{ \epsilon}_{ \hbar}(\hbar k) , k \in \mathbb{Z} \rbrace \cap R_{C, \epsilon_0} + \mathcal{O}(\hbar^{ \infty}),$$
where $g^{ \epsilon}_{ \hbar}$ is an analytic function admitting an asymptotic expansion in powers of $\hbar$, depending smoothly on $ \epsilon$ and the first term $g^{ \epsilon}_0$ is the inverse of the action coordinate $ \frac{1}{2 \pi} \int_{ \gamma_E} \xi dx$. This means that the eigenvalues are along a curve up to $ \mathcal{O}( \hbar^{ \infty})$. \\
Moreover, since $P_{ \hbar}^{ \epsilon}$ is $ \mathcal{P} \mathcal{T}$-symmetric, we have $ \mathcal{P} \mathcal{T}( P_{ \hbar}^{ \epsilon} - z) = (P_{ \hbar}^{ \epsilon} - \overline{z}) \mathcal{P} \mathcal{T}$. Therefore the spectrum $ \sigma( P_{ \hbar}^{ \epsilon})$ is symmetric with respect to the real axis.  \\
If we choose an eigenvalue in the spectrum $ \sigma(P_{ \hbar}^{ \epsilon})$, then the symmetric of this eigenvalue must also be in the spectrum. Yet, the distance between the real parts of two eigenvalues is of order $ \mathcal{O}( \hbar)$, therefore the symmetric of an eigenvalue has the same real part as the eigenvalue itself. Therefore the symmetric of an eigenvalue is the eigenvalue itself, \textit{i.e.} the spectrum is real. As a result, we obtain that $ \sigma(P_{ \hbar}^{ \epsilon}) \cap R_{C, \epsilon_0}$ is real.
\end{proof}

\begin{remark}
We recover the result of Naima Boussekkine and Nawal Mecherout in \cite{BM} by using this theorem for any real number $E_0$ satisfying Hypothesis (E) (\textit{i.e.} for non-critical point $E_0$) and the result of Michael Hitrik in \cite{MR2101278} for critical points $E_0$.
\end{remark}

\section{Numerical illustrations}
\noindent In this section, we illustrate our result for several differential operators. The following plots have been obtained with the numerical computation software Scilab.

\subsection{Operators acting on $L^2( \mathbb{S}^1)$}
Let $ \alpha \in \mathbb{R}^*$. In this section, we deal with differential operators $P^{ \epsilon}$ acting on $L^2( \mathbb{S}^1)$ of the form:
$$ P^{ \epsilon}( \theta, \hbar D_{ \theta}) = \alpha \hbar D_{ \theta} + i \epsilon Q( \theta, \hbar D_{ \theta}) ,$$
where the symbol $q( \theta, I)$ associated with the operator $Q( \theta, \hbar D_{ \theta})$ is an analytic function on $ \mathbb{S}^1 \times \mathbb{R}$ which does not depend on the semi-classical parameter $ \hbar$. \\
To implement this type of operators and determine their spectra by numerical methods, we follow these three steps:
\begin{enumerate}
\item[1.] notice that the family $(e_l)_{l \in \mathbb{Z}} = (e^{i l \theta})_{l \in \mathbb{Z}}$ is an orthonormal basis of $L^2( \mathbb{S}^1)$, therefore we can define the operator $P^{ \epsilon}$ by its action on the basis, so we obtain an infinite matrix $ \mathcal{P}^{ \epsilon}$;
\item[2.] we choose an integer $N \geq 1$ and we restrict the matrix $ \mathcal{P}^{ \epsilon}$ to a matrix $ \mathcal{P}^{ \epsilon}_{2N+1}$ of size $(2N+1) \times (2N+1)$ by choosing to only consider the action of the operator $P^{ \epsilon}$ on the functions $(e_l)_{-N \leq l \leq N}$;
\item[3.] we compute the spectrum of $ \mathcal{P}^{ \epsilon}$ with the function \verb+spec+ of Scilab.
\end{enumerate}
Then, to compare the numerical spectrum with our result, we determine an approximate of the function $g^{ \epsilon}( \tilde{I})$ (which gives the exact spectrum) by considering the average in $ \theta$ of the symbol $p^{ \epsilon}( \theta, I) := \alpha I + i \epsilon q( \theta, I)$. \\
We obtain the following plots by using the parameters:
\begin{enumerate}
\item[1.] $N=66$;
\item[2.] $\hbar= \dfrac{1}{N}$;
\item[3.] $ \alpha = 1$;
\item[4.] $ \epsilon = \hbar^{ \delta}$ with $ \delta = \dfrac{1}{2}$.
\end{enumerate}

\begin{figure}[t!]
\includegraphics[width=13cm]{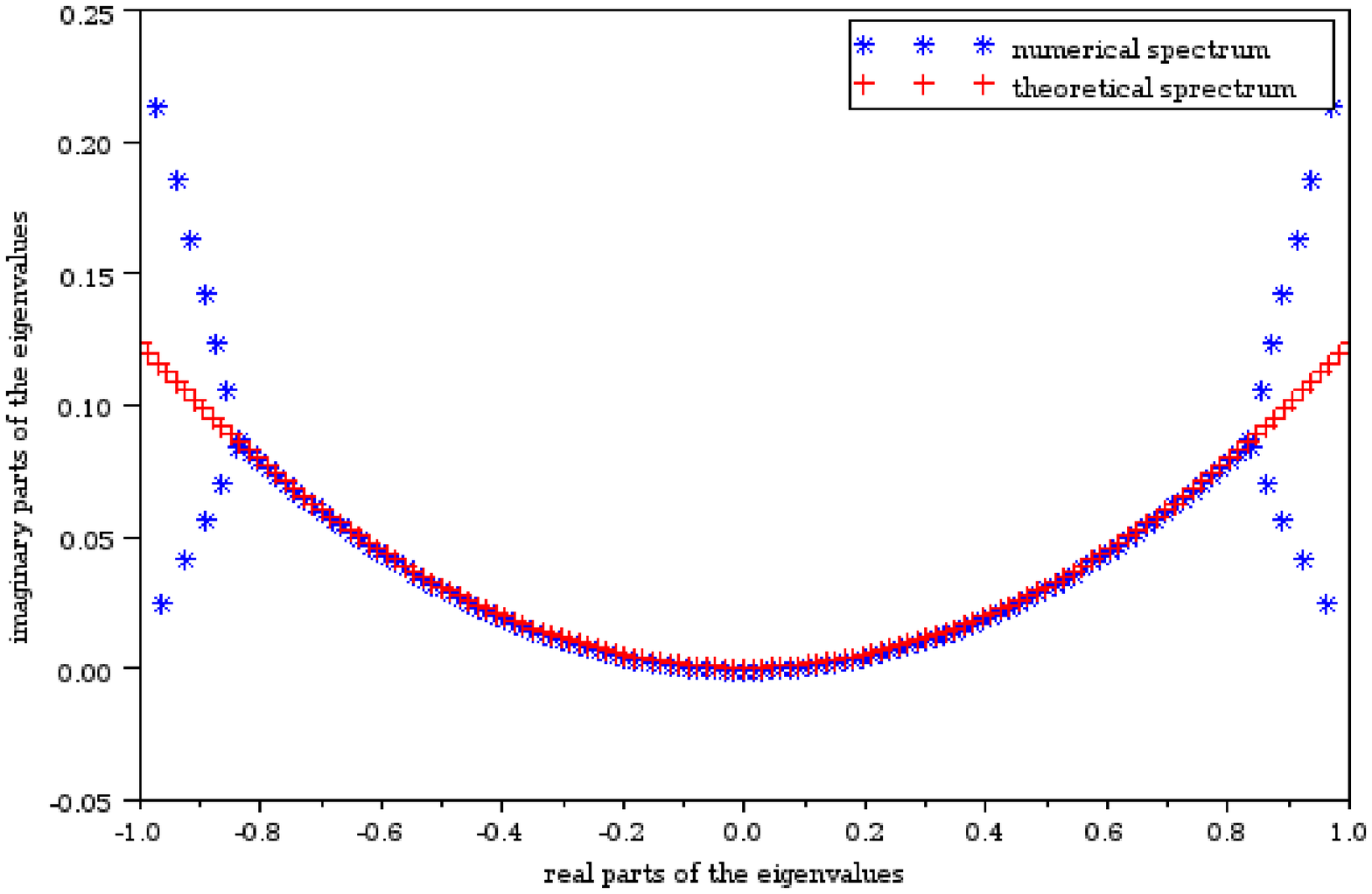}
\caption{$p^{ \epsilon}( \theta, I) = \alpha I + i \epsilon ( \cos \theta + I^2)$.}
\end{figure}

\begin{figure}[b!]
\includegraphics[width=13cm]{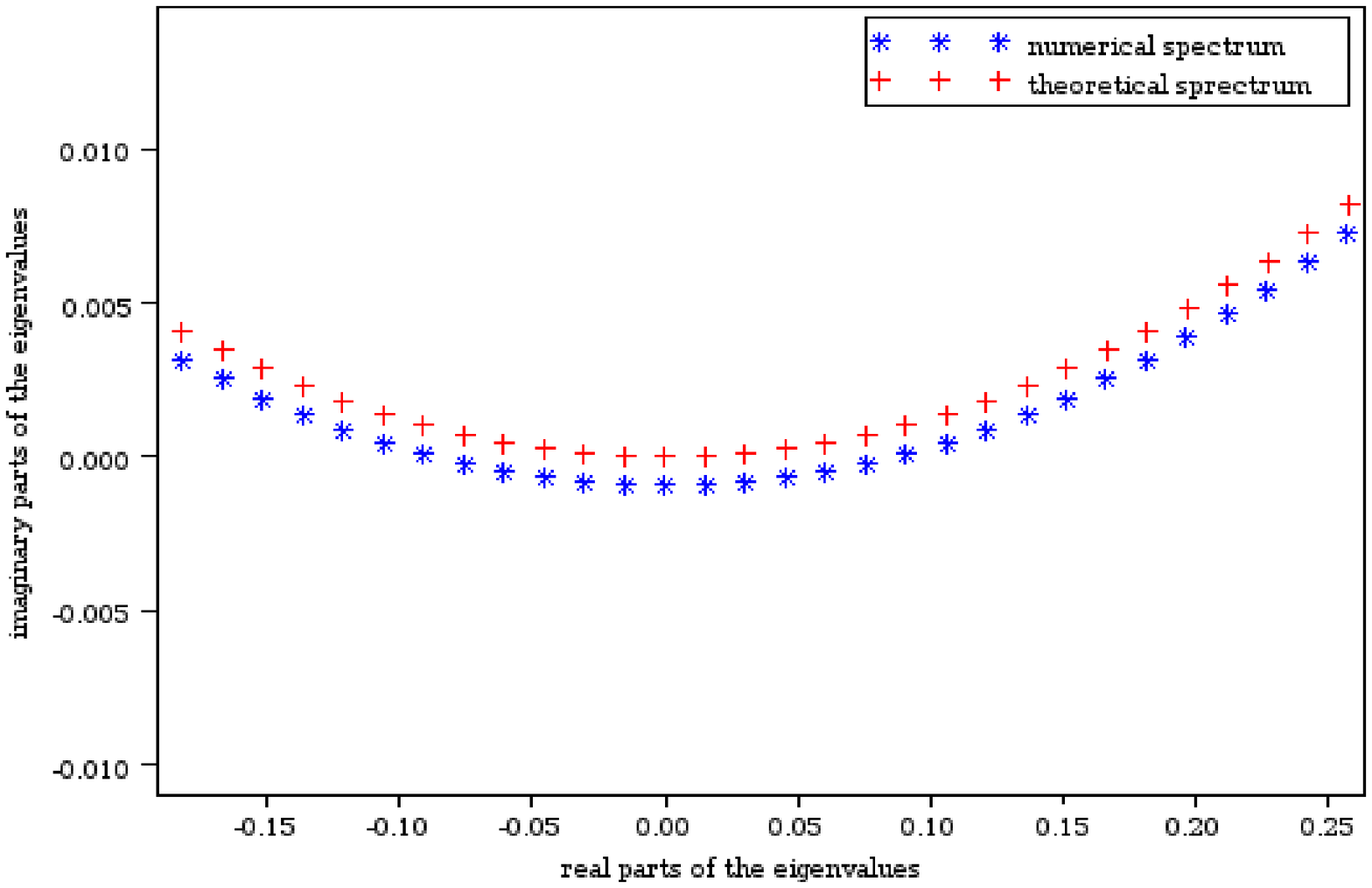}
\caption{$p^{ \epsilon}( \theta, I) = \alpha I + i \epsilon ( \cos \theta + I^2)$.}
\end{figure}

\begin{figure}[t!]
\includegraphics[width=13cm]{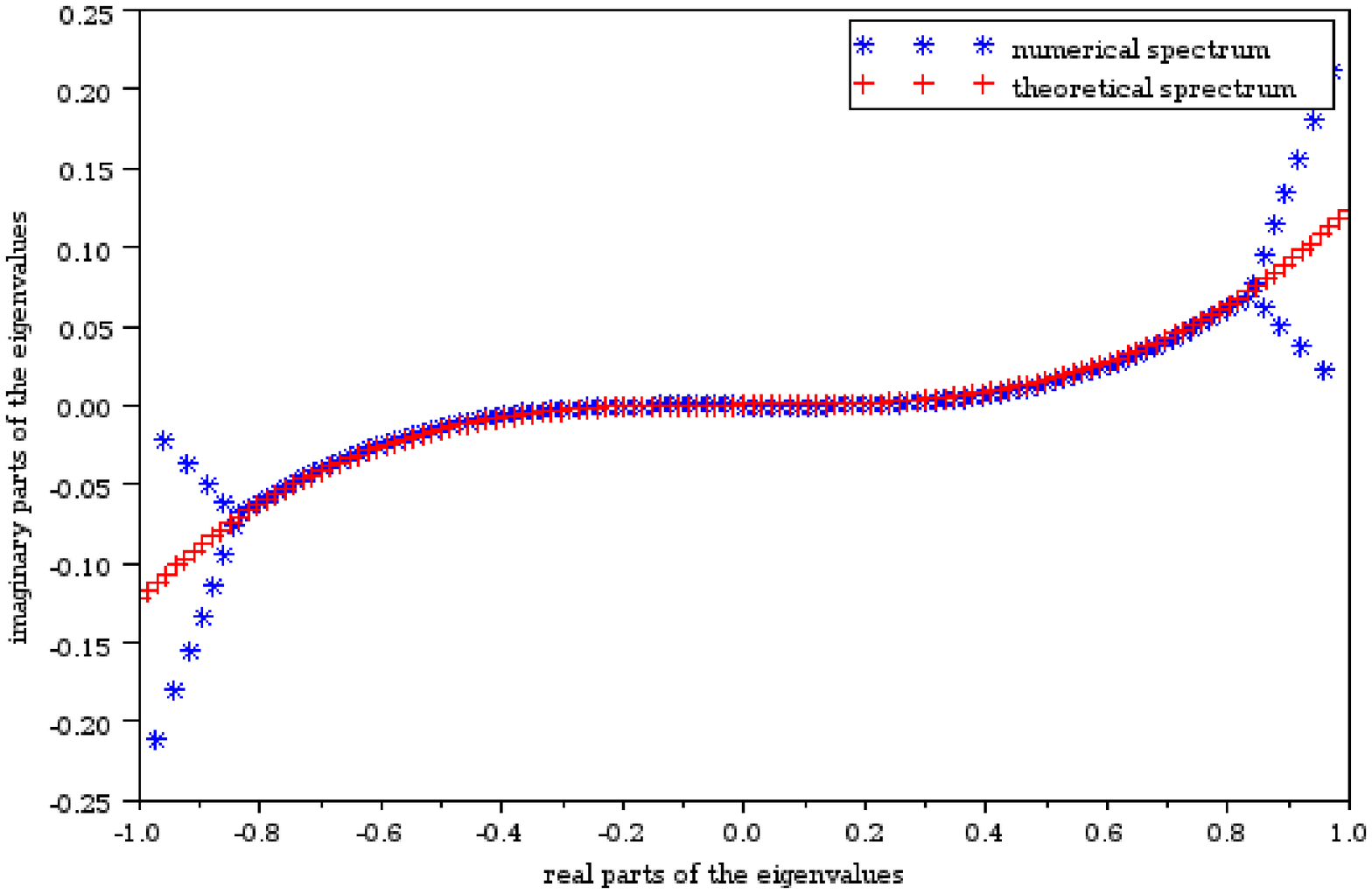}
\caption{$p^{ \epsilon}( \theta, I) = \alpha I + i \epsilon ( \cos \theta + I^3)$.}
\end{figure}

\begin{figure}[b!]
\includegraphics[width=13cm]{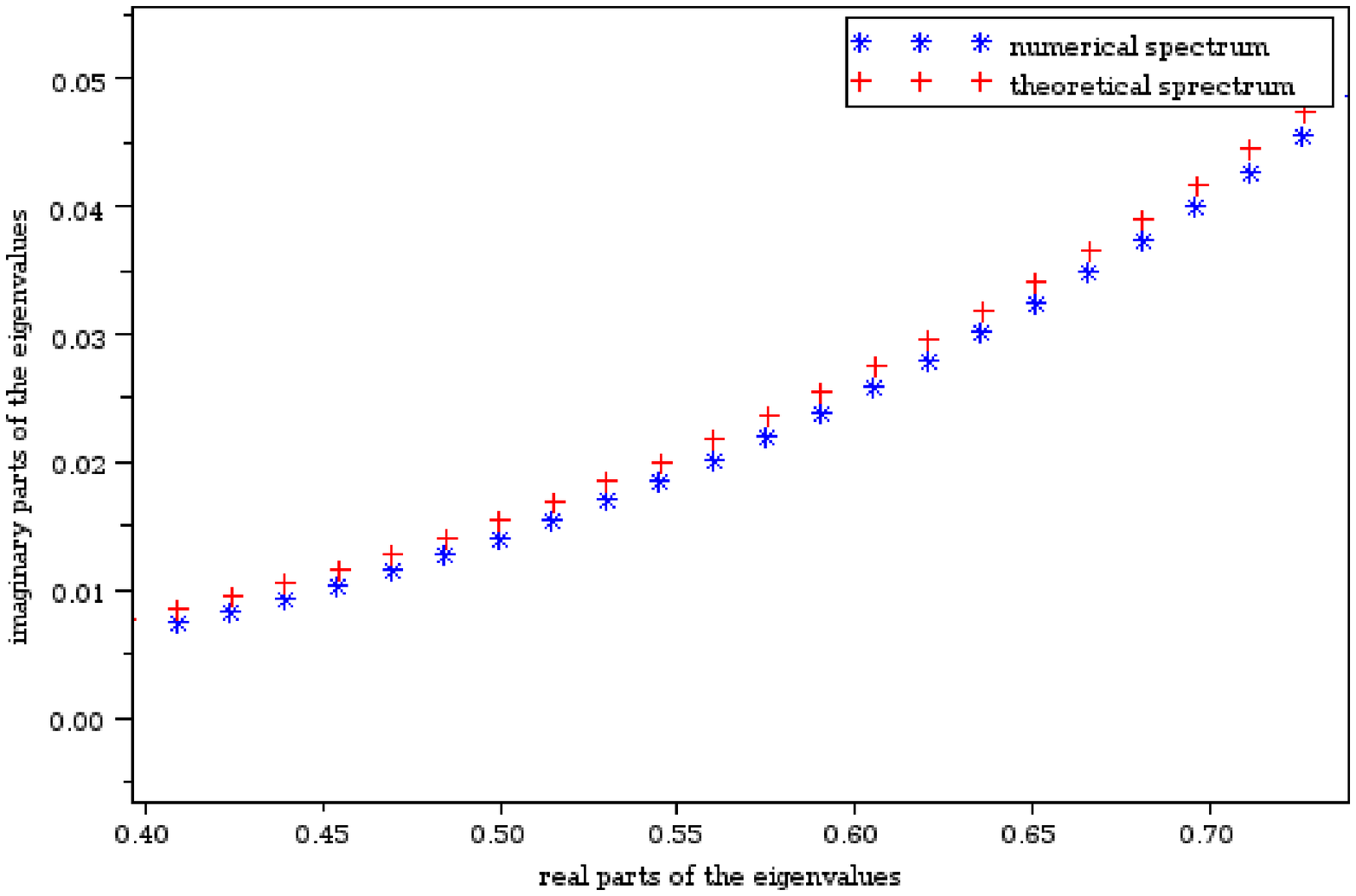}
\caption{$p^{ \epsilon}( \theta, I) = \alpha I + i \epsilon ( \cos \theta + I^3)$.}
\end{figure}

\newpage 
\subsection{Operators acting on $L^2( \mathbb{R})$}
In this section, we deal with differential operators $P^{ \epsilon}$ acting on $L^2( \mathbb{R})$ of the form:
$$ P^{ \epsilon}( x, \hbar D_x) = P_0(x, \hbar D_x) + i \epsilon Q(x, \hbar D_x) ,$$
where $P_0(x, \hbar D_x) = x^2 + (\hbar D_x)^2$ is the harmonic oscillator and where $q(x, \xi)$ the symbol associated with the operator $Q(x, \hbar D_x)$ is a polynomial function in $x$ and $ \xi$, which does not depend on the semi-classical parameter $ \hbar$.\\
To implement this type of operator, we consider the following space.

\begin{defi}[Fock space]
The Fock space, denoted by $ \mathcal{F}$, is the set of holomorphic functions $f(z)$ on $ \mathbb{C}$ satisfying:
$$ \dfrac{1}{\pi} \int_{ \mathbb{C}} |f(z)|^2 e^{-|z|^2/\hbar} L(dz) < + \infty .$$
\end{defi}

\noindent \texttt{Notation:} $ \langle ,  \rangle$ is the scalar product on $ \mathcal{F}$ defined for all $u, v \in \mathcal{F}$ by:
$$ \langle u, v \rangle = \dfrac{1}{ \pi} \int_{ \mathbb{C}} \overline{ u(z)} v(z) e^{-|z|^2/\hbar} L(dz) .$$

\noindent We can show that, for $ \alpha \in \mathbb{N}$, the family $( \zeta_{ \alpha})_{ \alpha \in \mathbb{N}}$, where:
$$ \zeta_{ \alpha}(z)= \dfrac{z^{ \alpha}}{\sqrt{ \hbar^{ \alpha +1} \alpha !}},$$
is an orthonormal basis of $ \mathcal{F}$. Recall the definition of the Bargmann transform associated with the Fock space.

\begin{defi}[Bargmann transform]
Let $u \in L^2( \mathbb{R})$, we define the Bargmann transform of $u$, for $z \in \mathbb{C}$, by:
$$ (Tu)(z) = \int_{ \mathbb{R}} e^{-(z^2 - 2 \sqrt{2} xz + x^2)/(2\hbar)} u(x) dx .$$
This transform sends $L^2( \mathbb{R})$ to the Fock space $ \mathcal{F}$.
\end{defi}

\noindent To determine the spectrum of the operator $P^{ \epsilon}$ by numerical methods, we follow these three steps:
\begin{enumerate}
\item[1.] we compute $T P^{ \epsilon} T^{-1}$ by using creation and annihilation operators and we define the operator $TP^{ \epsilon} T^{-1}$ by its action on the basis $( \zeta_{ \alpha})_{ \alpha \in \mathbb{N}}$, so we obtain an infinite matrix $ \mathcal{P}^{ \epsilon}$;
\item[2.] we choose an integer $N \geq 1$ and we restrict the matrix $ \mathcal{P}^{ \epsilon}$ to a matrix $ \mathcal{P}^{ \epsilon}_{N+1}$ of size $(N+1) \times (N+1)$ by choosing to only consider the action of the operator $TP^{ \epsilon} T^{-1}$ on the functions $(\zeta_{ \alpha})_{0 \leq \alpha \leq N}$;
\item[3.] we compute the spectrum of $ \mathcal{P}^{ \epsilon}$ with the function \verb+spec+ of Scilab.
\end{enumerate}
Then, to compare the numerical spectrum with our result, we determine an approximate of the function $g^{ \epsilon}$ (which gives the exact spectrum) by giving explicit action-angle coordinates for the harmonic oscillator and by computing an approximate to order $ \epsilon$ of the function $g^{ \epsilon}$ (by averaging $q$). We denote this approximate by $ \tilde{g}^{ \epsilon}$. \\
We compare the numerical result with the approximate spectrum obtained by using our theorem (\textit{i.e.} $ \tilde{g}^{ \epsilon}( \hbar k)$ with $k \in \mathbb{Z}$) and with the approximate spectrum obtained by using the spectrum of the harmonic oscillator (\textit{i.e.} $\tilde{g}^{ \epsilon}(\hbar(2k+1))$ with $k \in \mathbb{Z}$). We observe that the approximate spectrum obtained via the spectrum of the harmonic oscillator is better than the one obtained with our result, because it takes into account the Maslov index.\\

\noindent We obtain the following plots by using the parameters:
\begin{enumerate}
\item[1.] $N=66$;
\item[2.] $\hbar = \dfrac{1}{N}$;
\item[3.] $ \epsilon = \hbar^{ \delta}$ with $ \delta = \dfrac{1}{2}$.
\end{enumerate}

\begin{figure}[!!b]
\includegraphics[width=13cm]{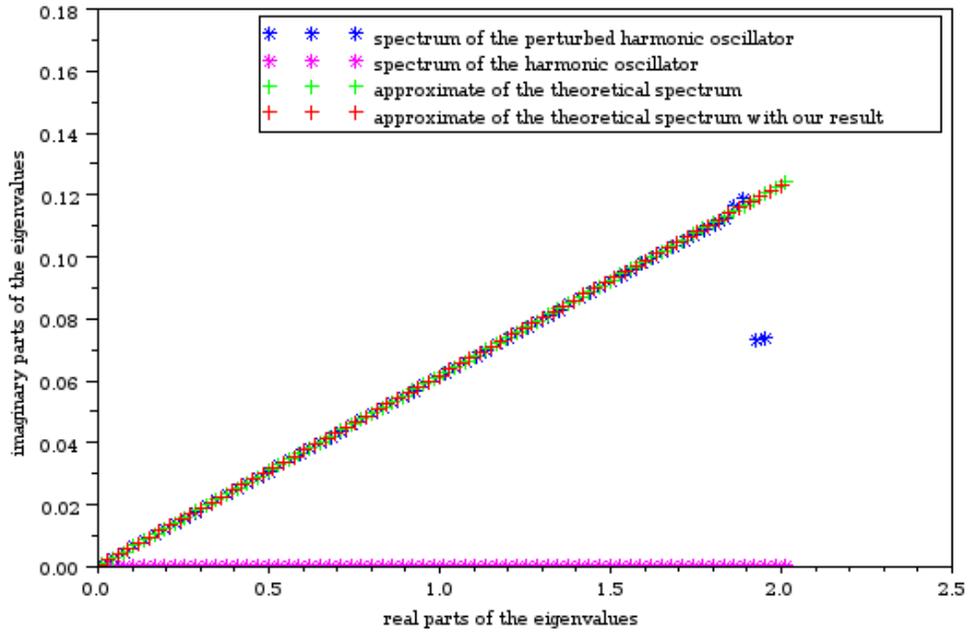}
\caption{$p^{ \epsilon}(x, \xi) = x^2 + \xi^2 + i \epsilon x^2$.}
\end{figure}

\begin{figure}[!!t]
\includegraphics[width=13cm]{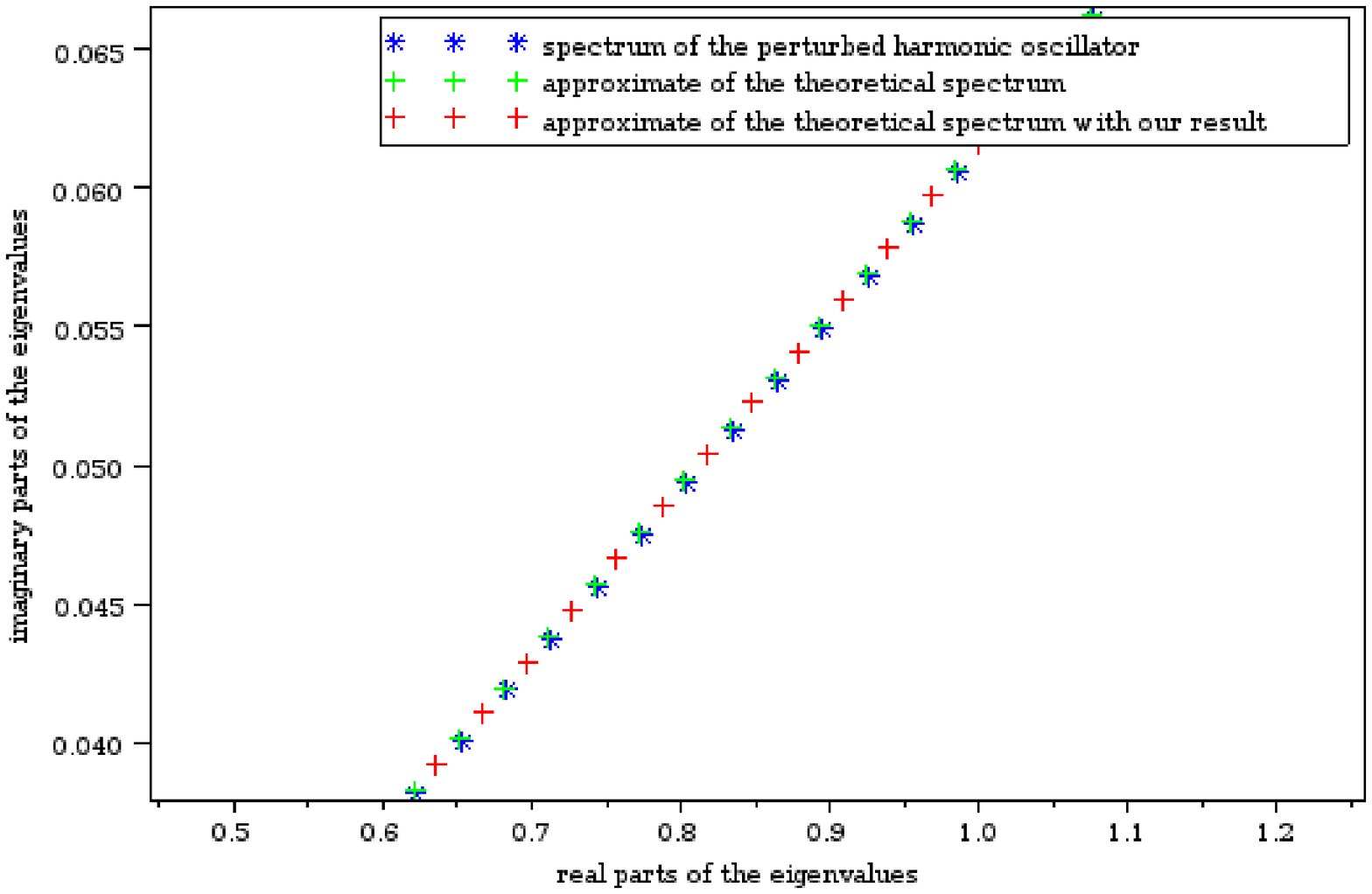}
\caption{$p^{ \epsilon}(x, \xi) = x^2 + \xi^2 + i \epsilon x^2$.}
\end{figure}

\begin{figure}[!!b]
\includegraphics[width=13cm]{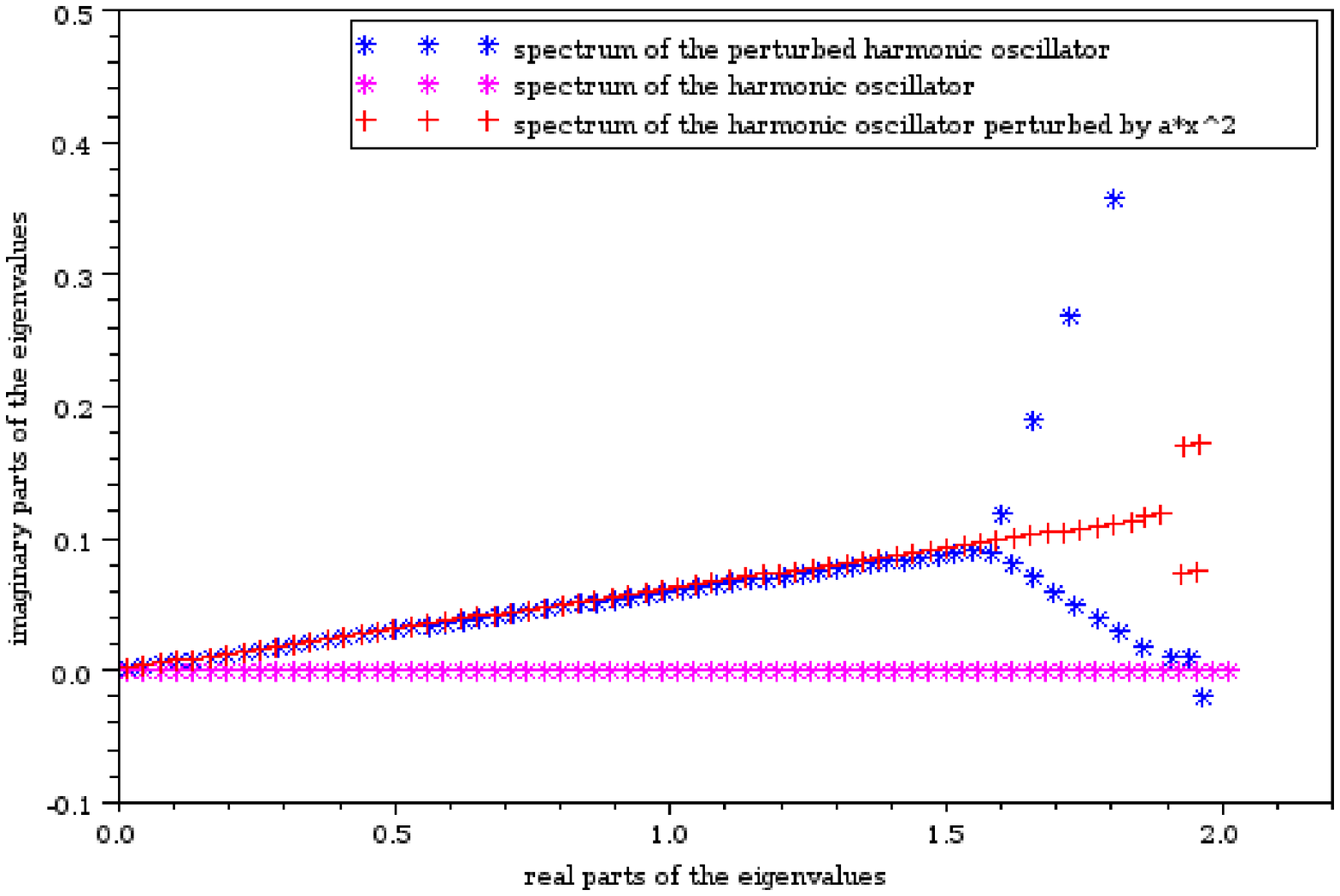}
\caption{$p^{ \epsilon}(x, \xi) = x^2 + \xi^2 + i \epsilon (x^2+x^3)$.}
\end{figure}

\begin{figure}[!!t]
\includegraphics[width=13cm]{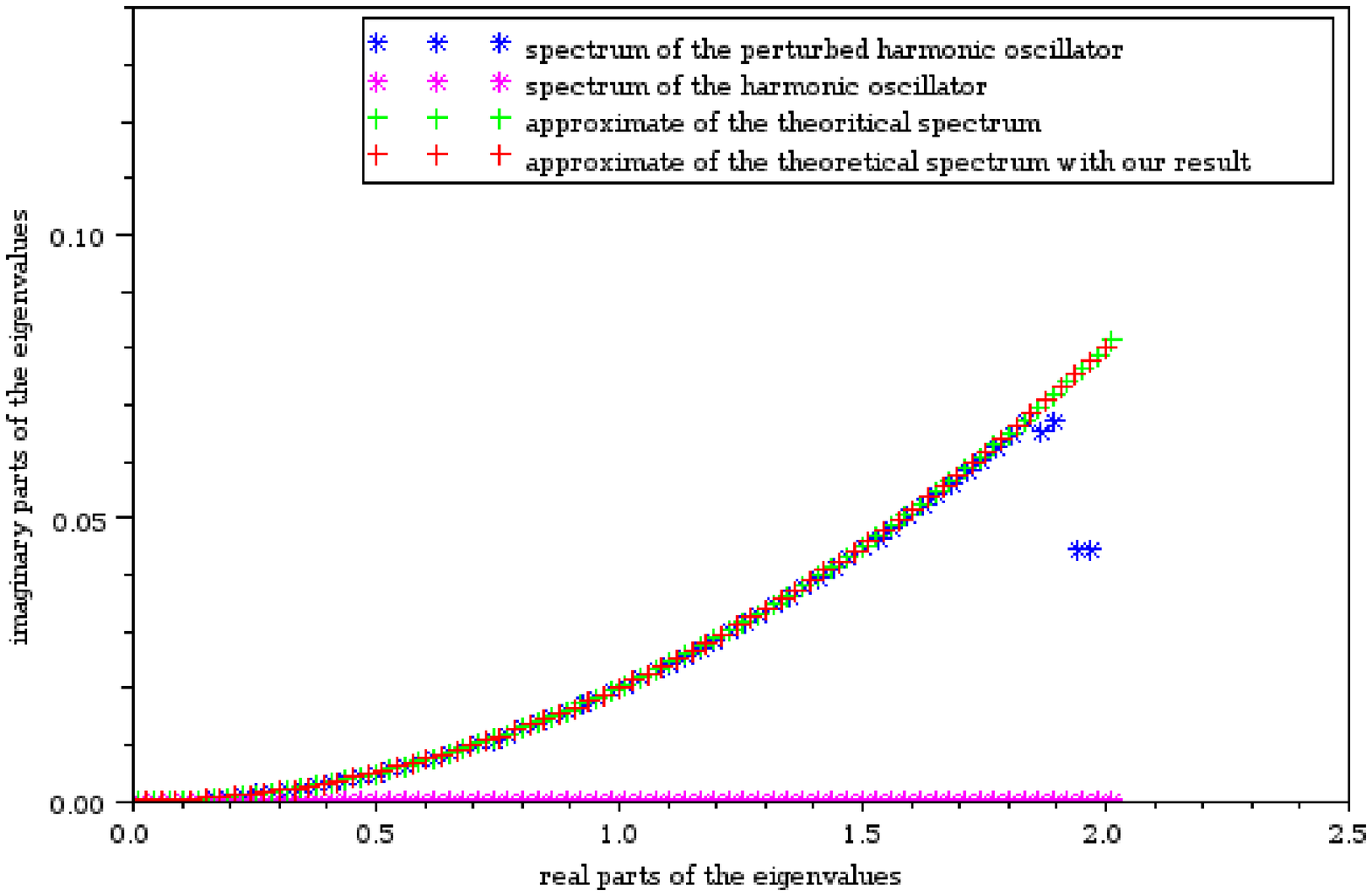}
\caption{$p^{ \epsilon}(x, \xi) = x^2 + \xi^2 + i \epsilon x^4$.}
\end{figure}

\begin{figure}[!!b]
\includegraphics[width=13cm]{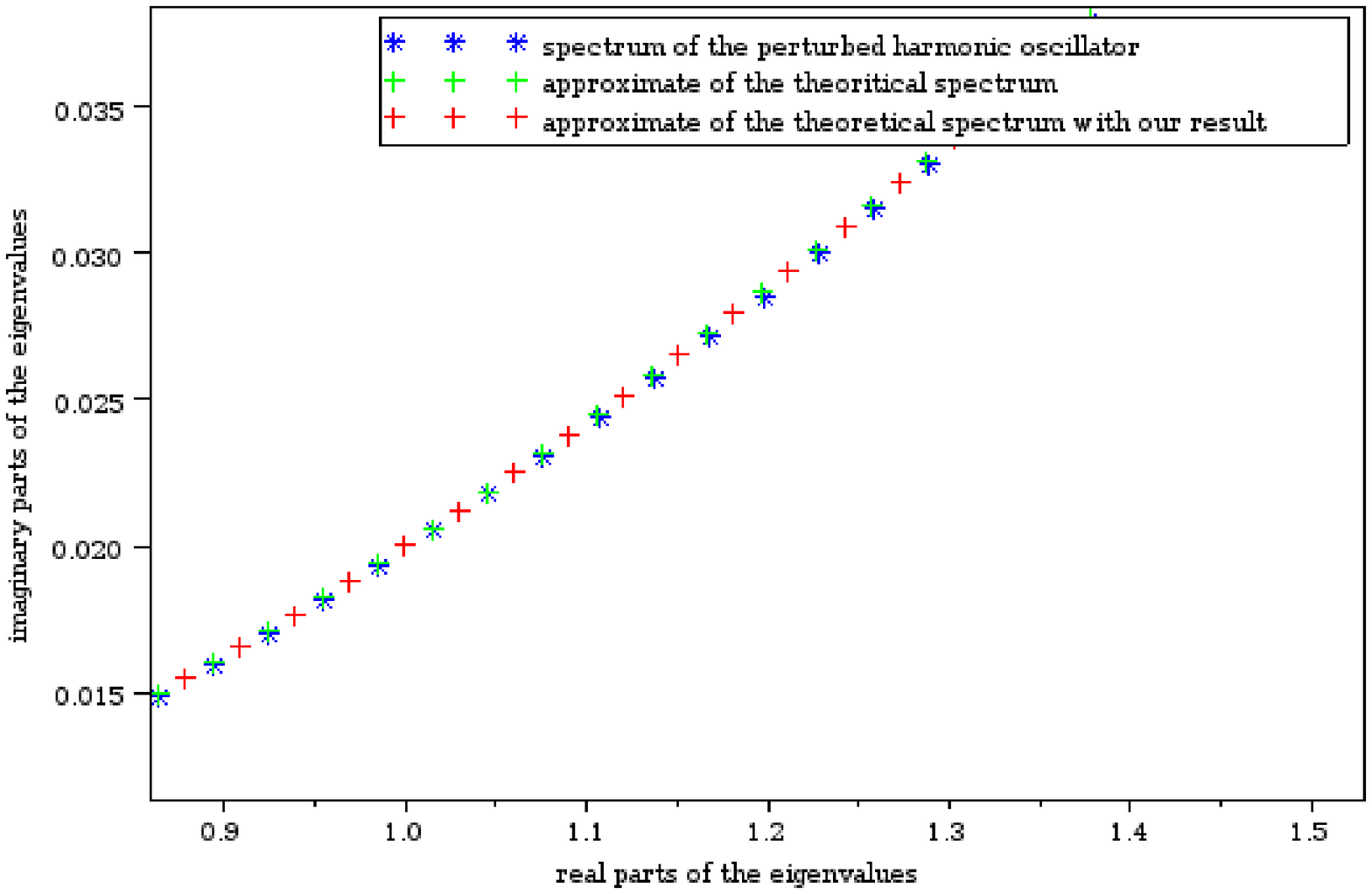}
\caption{$p^{ \epsilon}(x, \xi) = x^2 + \xi^2 + i \epsilon x^4$.}
\end{figure}

\newpage

\renewcommand{\thesection}{\Alph{section}}
\setcounter{section}{0}

\nocite{*}
\bibliographystyle{amsalpha}
\bibliography{biblio}

\end{document}